\numberwithin{equation}{section}
\def \today{\number\day\space\ifcase\month\or   January\or February\or
   March\or April\or May\or June\or   July\or August \or September\or
   October\or November\or December\fi\   \number\year}
\newtheorem{lem}{Lemma}[section]
\newaliascnt{thmCt}{lem}
\newtheorem{thm}[thmCt]{Theorem}
\newaliascnt{corCt}{lem}
\newtheorem{cor}[corCt]{Corollary}
\newaliascnt{prpCt}{lem}
\newtheorem{prp}[prpCt]{Proposition}
\theoremstyle{definition}
\newtheorem{ntn}[lem]{Notation}
\newaliascnt{dfnCt}{lem}
\newtheorem{dfn}[dfnCt]{Definition}
\newaliascnt{rmkCt}{lem}
\newtheorem{rmk}[rmkCt]{Remark}
\newaliascnt{qstCt}{lem}
\newtheorem{qst}[qstCt]{Question}
\newaliascnt{exaCt}{lem}
\newtheorem{exa}[exaCt]{Example}
\newcommand{\af}{\alpha}
\newcommand{\bt}{\beta}
\newcommand{\gm}{\gamma}
\newcommand{\dt}{\delta}
\newcommand{\ep}{\varepsilon}
\newcommand{\zt}{\zeta}
\newcommand{\et}{\eta}
\newcommand{\ch}{\chi}
\newcommand{\io}{\iota}
\newcommand{\te}{\theta}
\newcommand{\ld}{\lambda}
\newcommand{\sm}{\sigma}
\newcommand{\kp}{\kappa}
\newcommand{\ph}{\varphi}
\newcommand{\ps}{\psi}
\newcommand{\rh}{\rho}
\newcommand{\om}{\omega}
\newcommand{\ta}{\tau}
\newcommand{\Gm}{\Gamma}
\newcommand{\Q}{{\mathbb{Q}}}
\newcommand{\Z}{{\mathbb{Z}}}
\newcommand{\R}{{\mathbb{R}}}
\newcommand{\C}{{\mathbb{C}}}
\newcommand{\N}{{\mathbb{N}}}
\newcommand{\Nz}{\N \cup \{ 0 \}}
\newcommand{\bigast}{\scalebox{1.5}{$\ast$}}
\newcommand{\Bigast}{\scalebox{2.0}{$\ast$}}
\newcommand{\BFP}{\bigast}
\newcommand{\BigBFP}{\Bigast}
\newcommand{\ca}{$C^*$-algebra}
\newcommand{\ga}{$G$-algebra}
\newcommand{\stHm}{${}^*$-ho\-mo\-mor\-phism}
\newcommand{\ct}{continuous}
\newcommand{\lcg}{locally compact group}
\newcommand{\eqv}{equivariant}
\newcommand{\sj}{semipro\-jective}
\newcommand{\sjy}{semipro\-jectivity}
\newcommand{\pj}{projective}
\newcommand{\pjy}{projectivity}
\newcommand{\eqsj}{equivariantly semiprojective}
\newcommand{\eqpj}{equivariantly projective}
\newcommand{\sjpj}{(semi)pro\-jective}
\newcommand{\sjpjy}{(semi)pro\-jectivity}
\newcommand{\SINgp}{[SIN]-group}
\newcommand{\OA}[1]{{\mathcal{O}}_{#1}}
\newcommand{\CatG}{\mathcal{C}_G}
\newcommand{\CatH}{\mathcal{C}_H}
\newcommand{\Cb}{C_{\mathrm{b}}}
\newcommand{\Cu}{C_{\mathrm{u}}}
\newcommand{\Cru}{C_{\mathrm{ru}}}
\newcommand{\andeqn}{\, \, \, \, \, \, {\mbox{and}} \, \, \, \, \, \,}
\newcommand{\aGA}{\af \colon G \to \Aut (A)}
\newcommand{\GAa}{(G, A, \af)}
\newcommand{\GBb}{(G, B, \bt)}
\newcommand{\CGAa}{A \rtimes_{\af} G}
\DeclareMathOperator{\Ind}{Ind}
\DeclareMathOperator{\ord}{ord}
\DeclareMathOperator{\locdim}{locdim}
\DeclareMathOperator{\udim}{\Delta d}
\DeclareMathOperator{\LUCInd}{UInd}
\DeclareMathOperator{\id}{id}
\DeclareMathOperator{\ev}{ev}
\DeclareMathOperator{\Aut}{Aut}
\DeclareMathOperator{\im}{ran}
\DeclareMathOperator{\dist}{dist}
\DeclareMathOperator{\supp}{supp}
\DeclareMathOperator{\card}{card}
\title{Semiprojectivity with and without a group action}
\author{N.~Christopher Phillips}
\author{Adam P.~W.\  S{\o}rensen}
\author{Hannes Thiel}
\date{11~March 2014}
\address{Department of Mathematics, University of Oregon,
      Eugene OR 97403-1222, USA,
      and Department of Mathematics, University of Toronto,
      Room 6290, 40 St.\  George St., Toronto ON M5S 2E4, Canada.}
\email[]{ncp@darkwing.uoregon.edu}
\address{School of Mathematics and Applied Statistics,
    University of Wollongong,
    Wollongong NSW 2522, Australia.}
\email[]{sorensen@uow.edu.au}
\address{Mathematisches Institut,
    Fachbereich Mathematik und Informatik der Universit\"{a}t
      M\"{u}nster,
    Einsteinstrasse 62, 48149 M\"{u}nster, Germany.}
\email[]{hannes.thiel@uni-muenster.de}
\subjclass[2010]
{Primary
46L55. 
Secondary
22D05, 
22D15, 
22D30, 
46M10, 
55P91, 
54C55, 
55M15. 
}
\keywords{$C^*$-algebra, equivariant semiprojectivity, projectivity, induced algebra, induction functor, uniformly finitistic, crossed product, fixed point algebra}
\thanks{This material is based upon work of the first author
supported by the US National Science Foundation under Grants
DMS-0701076 and DMS-1101742
and by a visiting professorship at the Research Institute
for Mathematical Sciences at Kyoto University,
and on work of all three authors supported
by the Danish National Research Foundation
through the Centre for Symmetry and Deformation (DNRF92).
The second named author was supported by
The Danish Council
for Independent Research {\textbar} Natural Sciences.
}
\begin{document}

\begin{abstract}
The equivariant version of semiprojectivity
was recently introduced by the first author.
We study properties of this notion,
in particular its relation to ordinary semiprojectivity
of the crossed product
and of the algebra itself.

We show that equivariant semiprojectivity is preserved when the
action is restricted to a cocompact subgroup.
Thus, if a second countable compact group acts semiprojectively on a \ca{} $A$,
then $A$ must be semiprojective.
This fails for noncompact groups:
we construct a semiprojective action of $\Z$
on a nonsemiprojective \ca.

We also study equivariant projectivity and obtain analogous results,
however with fewer restrictions on the subgroup.
For example, if a discrete group acts projectively
on a \ca{} $A$,
then $A$ must be projective.
This is in contrast to the semiprojective case.

We show that the crossed product by a semiprojective action
of a finite group on a unital \ca{}
is a semiprojective \ca.
We give examples to show that this does not generalize
to all compact groups.
\end{abstract}

\maketitle


\indent
Equivariant \sjy{} was introduced in \cite{Phi2012},
by applying the usual definition of \sjy{}
to the category of unital \ga{s}
(\ca{s} with actions of the group~$G$)
with unital $G$-\eqv{} \stHm{s}.
See Definition~\ref{D_EqSj} below.
The purpose of \cite{Phi2012} was to show that certain
actions of compact groups
on various specific \ca{s} are \sj.
In particular, it is shown that any action of a second countable compact group
on a finite dimensional \ca{} is \sj,
and that for $n < \infty$, quasifree actions
of second countable compact
groups on the Cuntz algebras $\OA{n}$ are \sj.

In this paper we study \eqv{} \sjy{} more abstractly.
We also introduce \eqv{} \pjy{}
and carry out a parallel study of it.
We extend the definition to allow actions by general \lcg{s},
and we consider the nonunital version
of \eqv{} \sjy.

{}From the work in \cite{Phi2012}, it is not even clear
whether a \sj{} action of a noncompact group can exist.
One reason for skepticism was that the
trivial action of $\Z$ on $\C$ is not \sj,
as was shown by Blackadar~(\cite{BlaPrivate}).
We give a wide reaching generalization of this result
in \autoref{P_SjTriv},
by showing that if the trivial action of a group on a (nonzero)
\ca{} is \sj{} then the group must be compact.

There are, however, many nontrivial \sj{} (and even \pj{}) actions
of noncompact groups.
Indeed, given a countable discrete group $G$
and a \sj{} \ca{} $A$,
we show in \autoref{P_BernShift_EqSj_General}
that the free Bernoulli shift action of $G$
on the free product $\BFP_{g \in G} A$
is \eqsj.

Our main motivation was to understand how \eqv{} \sjy{}
(with group action) is related to \sjy{}
(without group action).
The following question naturally occurs:

\begin{qst}
\label{Q_Motivation1}
Assume that $\GAa$ is an \eqsj{} \ga{}
(Definition~\ref{D_EqSj} below).
Is $A$ \sj{} in the usual sense?
\end{qst}

We give a positive answer in \autoref{P_EqSj_implies_Sj}
under the assumption that $G$ is compact.
If we drop this assumption,
then the answer to the question may be negative.
Indeed, in \autoref{R:SjSubGp_notCpct} we construct
a \sj{} action of $\Z$ on a non\sj{} \ca.

\autoref{Q_Motivation1}
is a special case of a more natural question:

\begin{qst}
\label{Q_Motivation2}
Assume that $\GAa$ is a \ga{} that is \eqsj{} (\eqpj),
and let $H \leqslant G$ be a closed subgroup.
Is the restricted $H$-algebra $(H, A, \af |_H)$ \eqsj{} (\eqpj)?
\end{qst}

The two main results of this paper answer this question positively
under certain natural assumptions on the factor space $G / H$.
In the semiprojective case,
we get a positive answer (\autoref{P_SjSubGp})
if $H$ is cocompact, that is,
if $G / H$ is compact.
In the projective case, we get a positive answer
(\autoref{P_2X14PjSubGp}) if $H$ is compact or cocompact,
or if $G$ is a \SINgp{}
(meaning that the left and right uniformities on $G$ agree)
and $H$ is arbitrary.
These conditions are much less restrictive than in the \sj{} case.

This paper is organized as follows.
In \autoref{Sec_Intro} we give the definition of \eqv{} \sjy{}
(\autoref{D_EqSj}).
We also introduce \eqv{} \pjy{} (\autoref{D_EqPj}),
and we investigate the relation
between the unital and nonunital versions of these
definitions
(\autoref{P_SjNoUnit},
\autoref{P_SjUnitDiffCats},
and \autoref{P_SjUnitNoUnit}).

In \autoref{Sec_BernShift} we introduce free Bernoulli shifts,
and we show in \autoref{P_BernShift_EqSj_General}
that the free Bernoulli shifts
constructed from semiprojective \ca{s}
provide examples of \sj{} actions
of countable discrete groups.
We also study the orthogonal Bernoulli shift of $G$
on $\bigoplus_G A = C_0 (G, A)$ for \sjpj{} \ca{s}~$A$.
It turns out to be a much harder problem to determine
when this action is \sj,
and we give a positive answer,
for a semiprojective \ca~$A$, only
for finite cyclic groups of order $2^n$.
See \autoref{P_OrthShift}.

In \autoref{Sec_SjSubGp} we study
the semiprojective case of \autoref{Q_Motivation2}.
We give a positive answer (\autoref{P_SjSubGp})
when $G / H$ is compact.
It follows (\autoref{P_EqSj_implies_Sj})
that a second countable compact group can only act \sj{ly}
on a \ca{} that is \sj{} in the usual sense.
We show that this is not true in general,
by constructing in \autoref{R:SjSubGp_notCpct}
a \sj{} action of $\Z$ on a non\sj{} \ca.
The main ingredient in this section
is the induction functor (\autoref{D_2X08Ind}),
which assigns to each $H$-algebra an induced $G$-algebra.
We show that this functor is exact (\autoref{P_Ind_Exact})
and continuous (\autoref{P_Ind_Cts}).

In \autoref{Sec_PjSubGp} we study
\autoref{Q_Motivation2} in the projective case.
The main result is \autoref{P_2X14PjSubGp},
which gives a positive answer in considerable generality.
In particular, restriction to compact subgroups
preserves equivariant \pjy.
This shows that every \eqpj{} \ca{}
is (non\eqv{ly}) \pj{} (\autoref{P_EqPj_implies_Pj}),
which is in contrast to the \sj{} case
(\autoref{R:SjSubGp_notCpct}).

The main technique of this section
is an induction functor
which uses uniformly continuous functions; see \autoref{R_NCptInd}.
In \autoref{P_LUCInd-Exact}, we show that this functor is exact
if $H$ is compact or if $G$ is a \SINgp.
To prove this, we need conditions
under which uniformly continuous functions to a quotient \ca{}
can be lifted to uniformly continuous functions,
and in \autoref{P_UniformLifts}
we provide a satisfying answer
that might also be of independent interest.

In \autoref{Sec_SjCrPrd},
we study \sjy{} of crossed products.
In \autoref{P_CrPrdSj}, we show that for a discrete group~$G$
whose group \ca{} is \sj,
unital \sjy{} of an action $\aGA$ on a unital \ca{}
implies \sjy{} of the crossed product $\CGAa$.
\autoref{E_2X10_CptCP} shows that this can fail
when the group is compact but not finite.
At the end of \autoref{Sec_SjCrPrd},
we give counterexamples to several other
plausible relations
between \eqv{} \sjy{} for finite groups
and \sjy,
and state further open problems.

In \autoref{Sec_SjFixPt},
we study \sjy{} of fixed point algebras.
We show that for a saturated semiprojective action
of a finite group $G$ on a unital \ca~$A$,
the fixed point algebra $A^G$ is \sj{} (\autoref{P_2X10_SatSj}).
We show in \autoref{E_2X10_CptFix}
that this does not generalize to compact groups.
For a \sj{} action of a noncompact group,
we show in \autoref{P_FixPt} that the fixed point algebra is trivial.
Thus, the trivial action
of a noncompact group on a nonzero \ca{} is never \sj.
We therefore obtain a precise characterization
of when the trivial action of a group is \sjpj{} (\autoref{P_SjTriv}).


We use the following terminology and notation in this paper.
By a topological group we understand a group $G$ together with a
Hausdorff topology such that the map $(s, t) \mapsto s \cdot t^{- 1}$
is jointly \ct.
We mainly consider locally compact topological groups.
For such a group,
we denote its Haar measure by $\mu$.
By the Birkhoff-Kakutani theorem
(see Theorem 1.22 of \cite{MONZIP1955}),
$G$ is metrizable if and only if it is first countable.
Moreover,
in that case, the metric $d$ may be chosen to be left invariant,
that is,
$d (r s, r t) = d (s, t)$ for all $r, s, t \in G$.
We will always take our metrics to be left invariant.
We usually require $G$ to be second countable.

For a topological group~$G$,
by a {\emph{$G$-algebra}} we understand a triple $\GAa$
in which $A$ is a \ca{}
and $\af \colon G \to \Aut (A)$ is a \ct{} action of~$G$ on~$A$.
Continuity means that for each $a \in A$ the map
$s \mapsto \af_s (a)$ is \ct.
(Such an action is also called strongly continuous.)

By a {\emph{$G$-morphism}}
between two $G$-algebras $\GAa$ and $\GBb$
we mean a $G$-equivariant \stHm, that is,
a \stHm{} $\ph \colon A \to B$ such that
$\bt_s \circ \ph = \ph \circ \af_s $ for each $s \in G$.
We say that a \ga{} $\GAa$ is {\emph{separable}}
if $A$ is a separable \ca{} and $G$ is second countable
(hence also metrizable).

Given a $G$-algebra $\GAa$,
we denote by $A^G$ its fixed point algebra
\[
A^G
 = \big\{ a \in A \colon
   {\mbox{$\af_s (a) = a$ for all $s \in G$}} \big\}
\]
(even when $G$ is not compact),
and by $\CGAa$ the (maximal) crossed product of $\GAa$.

If $A$ is a \ca,
we denote by $A^{+}$ its unitization
(adding a new identity even if $A$ already has an identity).
We let ${\widetilde{A}}$ be $A^{+}$ when $A$ is not unital
and be $A$ when $A$ is unital.
If $A$ is a $G$-algebra,
then $A^{+}$ and ${\widetilde{A}}$ are both $G$-algebras
in an obvious way.

Subalgebras of \ca{s} are always assumed to be $C^*$-subalgebras,
and ideals are always closed and two sided.

We use the convention $\N = \{ 1, 2, \ldots \}$.


\section{Equivariant semiprojectivity and equivariant projectivity}
\label{Sec_Intro}

\indent
In this section we recall the definition of \eqv{} \sjy.
We also give a nonunital version,
and we will see in Lemmas
\ref{P_SjNoUnit} and
\ref{P_SjUnitDiffCats}
and \autoref{P_SjUnitNoUnit} how the two variants are related.
We also introduce \eqv{} \pjy.

The unital case of the following definition is
Definition~1.1 of \cite{Phi2012}.


\begin{dfn}
\label{D_EqSj}
A separable $G$-algebra $\GAa$ is called
{\emph{\eqsj{}}}
if whenever $(G, C, \gm)$ is a \ga,
$J_1 \subset J_2 \subset \cdots$ is an increasing sequence
of $G$-invariant ideals in~$C$,
$J = {\overline{\bigcup_{n = 1}^{\infty} J_n}}$,
$\pi_n \colon C/J_n \to C/J$ is the quotient \stHm{}
for $n \in \N$,
and $\ph \colon A \to C/J$ is a $G$-morphism,
then there exist $n \in \N$
and a $G$-morphism $\ps \colon A \to C/J_n$
such that $\pi_n \circ \ps = \ph$.

When no confusion can arise, we say that $A$ is \eqsj,
or that $\af$ is \sj.

We say that a separable unital \ga{} $\GAa$ is
{\emph{\eqsj{} in the unital category}}
if the same condition holds,
but under the additional assumption that $C$ and $\ph$ are unital,
and the additional requirement that one can choose $\ps$ to be unital.
\end{dfn}

The lifting problem of the definition
means that in the right diagram that appears below \autoref{D_EqPj},
the solid arrows are given,
and $n$ and $\ps$ are supposed to exist which make the
diagram commute.


\begin{dfn}
\label{D_EqPj}
A $\GAa$ is called {\emph{\eqpj{}}}
if whenever $(G, C, \gm)$ is a \ga,
$J$ is a $G$-invariant ideal in~$C$
with quotient \stHm{} $\pi \colon C \to C/J$,
and $\ph \colon A \to C/J$ is a $G$-morphism,
then there exists a $G$-morphism $\ps \colon A \to C$
such that $\pi \circ \ps = \ph$.

When no confusion can arise,
we say that $A$ is \eqpj, or that $\af$ is \pj.

We say that a unital \ga{} is
\emph{\eqpj{} in the unital category}
if the same condition holds,
but under the additional assumption that $C$ and $\ph$ are unital,
and the additional requirement that one can choose $\ps$ to be unital.
\end{dfn}


\noindent \hspace{-0.25cm}
\begin{tabular}{p{6cm}p{0.1cm}lp{0.1cm}l}
\hspace{0.3cm}
The lifting problem of the definition
means that the left diagram on the right can be completed.
Again, the solid arrows are given,
and $\ps$ is supposed to exist which makes the
diagram commute.

\hspace{0.3cm}
When working with \sjy{} and \pjy,
it is often convenient,
in the notation of \autoref{D_EqSj} and \autoref{D_EqPj},
to require that the map $\ph$ be an isomorphism.
& &
\makebox{
\xymatrix{
& C \ar@{->}[d]^{\pi} \\
A \ar[r]_-{\ph} \ar@{-->}[ru]^{\ps} & C / J.
} }
& &
\makebox{ \xymatrix{
& C \ar[d] \\
& C / J_n \ar[d]^{\pi_n} \\
A \ar[r]_-{\ph} \ar@{-->}[ru]^{\ps} & C / J.
} }
\\
\multicolumn{5}{p{\textwidth}}{
\hspace{0.3cm}
This can also be done in the equivariant case.
The proof follows that of Proposition~2.2 of \cite{Bla2004}.
We give the proof since \cite{Bla2004} is a survey article
and its proof omits some details.
}
\end{tabular}


\begin{lem} \label{L_2X05IdAlgInt}
Let $A$ be a \ca,
let $B \subset A$ be a $C^*$-subalgebra,
and let $I_1 \subset I_2 \subset \cdots \subset A$ be ideals.
Then
$B \cap {\overline{\bigcup_{n = 1}^{\infty} I_n}}
  = {\overline{\bigcup_{n = 1}^{\infty} (B \cap I_n)}}$.
\end{lem}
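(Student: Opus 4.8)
The plan is to prove the two inclusions separately, the easy one being ``$\supseteq$'' and the substantive one being ``$\subseteq$''.

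For the inclusion $B \cap \overline{\bigcup_n I_n} \supseteq \overline{\bigcup_n (B \cap I_n)}$: each $B \cap I_n$ is contained both in $B$ (which is closed) and in $I_n \subset \overline{\bigcup_m I_m}$ (which is closed), hence $\bigcup_n (B \cap I_n) \subseteq B \cap \overline{\bigcup_m I_m}$, and taking closures preserves this since the right-hand side is closed.

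For the reverse inclusion, let $J = \overline{\bigcup_n I_n}$ and $K = \overline{\bigcup_n (B \cap I_n)}$; note $K$ is an ideal of $B$. I would fix $b \in B \cap J$ and show $b \in K$ by producing, for each $\ep > 0$, an element of $\bigcup_n (B \cap I_n)$ within $\ep$ of $b$. The standard trick is to pass to positive elements via $b^* b$: since $b^* b \in B \cap J$ and $J$ is an ideal, one works inside $J$ using an approximate identity. Concretely, since $b^* b \in J = \overline{\bigcup_n I_n}$, for any $\dt > 0$ there is $n$ and $x \in I_n$ with $\|b^* b - x\| < \dt$; then the positive element $(b^*b - \dt)_+$ lies in the hereditary subalgebra generated by $x$, hence in $I_n$ (using that $I_n$ is an ideal and the functional calculus estimate for distances to hereditary subalgebras). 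Now $(b^* b - \dt)_+ \in B$ as well, since $b^* b \in B$ and functional calculus stays inside $B$; thus $(b^*b - \dt)_+ \in B \cap I_n \subseteq K$. Finally, writing $b = u |b|$ via the polar decomposition in $A^{**}$ and using that $b \cdot f(b^*b) \to b$ as $f$ runs over suitable functions approximating the identity near the spectrum, one gets $b \cdot g_\dt(b^*b) \in K$ for an appropriate continuous $g_\dt$ vanishing at $0$ (so that $g_\dt(b^*b)$ is a norm-limit of polynomials in $b^*b$ without constant term, keeping it in the ideal $K \cap \ldots$), and $\|b - b\cdot g_\dt(b^*b)\| \to 0$ as $\dt \to 0$. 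Since $b \cdot g_\dt(b^*b) \in B$ trivially and lies in $I_n$ because $g_\dt(b^*b) \in I_n$, it lies in $B \cap I_n \subseteq K$, and letting $\dt \to 0$ gives $b \in K$.

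The main obstacle is the bookkeeping in the last step: ensuring that the cut-down element $b\,g_\dt(b^* b)$ genuinely lands in some single $I_n$ (not just in $J$) while simultaneously lying in $B$, and that it approximates $b$. The clean way is the lemma that for a positive element $a$ in a \ca{} and $\dt>0$, the element $(a-\dt)_+$ lies in the (not necessarily closed) ideal algebraically generated by $a$, together with the estimate $\dist\big((a-\dt)_+,\, I\big) \le \dist(a, I) \cdot (\text{const})$ or more simply that $\|a - x\|<\dt$ forces $(a-\dt)_+ \in \overline{x A x}$; combined with $\overline{x A x} \subseteq I_n$ when $x \in I_n$. I would cite or quickly reprove this functional-calculus fact and then the argument closes. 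Everything else is routine manipulation with approximate units and continuity of the functional calculus.
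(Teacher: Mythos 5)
Your argument is correct in outline, but it takes a genuinely different route from the paper. The paper avoids functional calculus entirely: writing $\kappa_n \colon B \to B/(B\cap I_n)$ and $\pi_n \colon A \to A/I_n$, the inclusion $B \subset A$ induces an injective, hence isometric, ${}^*$-homomorphism $B/(B\cap I_n) \to A/I_n$, so $\dist(b, I_n) = \dist(b, B\cap I_n)$ for every $b \in B$; if $b \notin \overline{\bigcup_n (B\cap I_n)}$ then $\dist(b, B\cap I_n) \geq \rho > 0$ for all $n$, whence $\dist(b, I_n) \geq \rho$ and $b \notin \overline{\bigcup_n I_n}$. That is a three-line contrapositive. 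Your route, via $b^*b$, the elements $(b^*b-\dt)_+$, and a cut-down $b\,g_\dt(b^*b)$, works and is a standard technique, but it is longer and each step needs the bookkeeping you acknowledge. (Two small simplifications: to get $(b^*b-\dt)_+ \in I_n$ you do not need the hereditary-subalgebra lemma, which anyway requires a positive approximant; just note that the image of $b^*b$ in $A/I_n$ is positive of norm $<\dt$, so the image of $(b^*b-\dt)_+$ is $\bigl(\pi_n(b^*b)-\dt\bigr)_+ = 0$.)

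One justification as written does not do the job: you say $g_\dt(b^*b) \in I_n$ because $g_\dt(b^*b)$ is a norm-limit of polynomials in $b^*b$ without constant term. That only places it in the closed ideal generated by $b^*b$, which sits inside $J = \overline{\bigcup_n I_n}$, not in any single $I_n$ --- exactly the trap you warn about. The correct bridge is to choose $g_\dt$ vanishing on all of $[0,\dt]$, for instance $g_\dt(t) = (t-\dt)_+/t$ (extended by $0$ at $t=0$). Then $g_\dt = \phi \circ h_\dt$ with $h_\dt(t) = (t-\dt)_+$ and $\phi(s) = s/(s+\dt)$ continuous with $\phi(0)=0$, so $g_\dt(b^*b) = \phi\bigl((b^*b-\dt)_+\bigr)$ lies in the C*-algebra generated by $(b^*b-\dt)_+$, hence in $B \cap I_n$, which you have already secured. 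Then $b\,g_\dt(b^*b) \in B \cap I_n$, and $\|b - b\,g_\dt(b^*b)\|^2 = \bigl\| \bigl(1-g_\dt(b^*b)\bigr) b^*b \bigl(1-g_\dt(b^*b)\bigr)\bigr\| \leq \dt$ since $t(1-g_\dt(t))^2 \leq \dt$, so letting $\dt \to 0$ gives $b \in \overline{\bigcup_n (B\cap I_n)}$; the polar decomposition in $A^{**}$ is not needed. With this repair your proof is complete, though the paper's isometry argument remains substantially shorter.
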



\begin{proof}
We have
$\bigcup_{n = 1}^{\infty} (B \cap I_n)
  \subset B \cap {\overline{\bigcup_{n = 1}^{\infty} I_n}}$,
so
${\overline{\bigcup_{n = 1}^{\infty} (B \cap I_n)}}
  \subset B \cap {\overline{\bigcup_{n = 1}^{\infty} I_n}}$.

For the reverse,
let $b \in B$ and suppose that
$b \not \in {\overline{\bigcup_{n = 1}^{\infty} (B \cap I_n)}}$.
Let $\rh$ be the norm of the image of $b$ in
$B \big/
  {\overline{\bigcup_{n = 1}^{\infty} (B \cap I_n)}}$.
For $n \in \N$,
let $\kp_n \colon B \to B / (B \cap I_n)$
and $\pi_n \colon A \to A / I_n$
be the quotient maps.
Then $\| \kp_n (b) \| \geq \rh$ for all $n \in \N$.
The inclusion $\io \colon B \to A$ induces injective
\stHm{s} $\io_n \colon B / (B \cap I_n) \to A / I_n$
such that $\pi_n \circ \io = \io_n \circ \kp_n$.
Since $\io_n$ is isometric,
we have
\[
\| (\pi_n \circ \io) (b) \|
 = \| \io_n (\kp_n (b)) \|
 = \| \kp_n (b) \|
 \geq \rh,
\]
whence $\dist (b, I_n) \geq \rh$.
This is true for all $n \in \N$,
so $b \not \in {\overline{\bigcup_{n = 1}^{\infty} I_n}}$.
\end{proof}


\begin{prp}
\label{P_SjInjSurj}
Let $(G, A, \alpha)$ be a \ga{}
(separable for the statements involving \sjy).
As for usual \sjpjy,
the definitions of equivariant \sjy{} (\autoref{D_EqSj})
and of equivariant \pjy{} (\autoref{D_EqPj}) for $(G, A, \alpha)$,
in both the unital and nonunital categories,
are unchanged if,
in the notation of these definitions, we require one or both
of the following:
\begin{enumerate}
\item\label{P_SjInjSurj-In}
$\ph$ is injective.
\item\label{P_SjInjSurj-Su}
$\ph$ is surjective.
\end{enumerate}
\end{prp}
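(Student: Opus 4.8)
The plan is to follow the classical argument from \cite{Bla2004}, adapting it to the equivariant setting, and to prove the reduction in two stages: first reduce to the case where $\ph$ is injective, then reduce further (or separately) to the case where $\ph$ is surjective, observing at each stage that the hypothesis we are using is the one that is already available. Since having both properties simultaneously means $\ph$ is an isomorphism, handling the two reductions independently and then combining them gives the full statement.

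\medskip\noindent\textbf{Reduction to injective $\ph$.}
Suppose $(G,A,\af)$ satisfies the lifting property with the extra assumption that $\ph$ is injective; I want to deduce the lifting property for arbitrary $G$-morphisms $\ph\colon A\to C/J$. First I would replace the target by the mapping-cylinder-type construction: given $\ph\colon A\to C/J$ with $J=\overline{\bigcup J_n}$ (the semiprojective case; the projective case is the single-ideal version), form the pullback
\[
  D=\{(a,c)\in A\oplus C : \ph(a)=\pi(c)\},
\]
which carries the diagonal $G$-action (using $\af$ on $A$ and $\gm$ on $C$), the $G$-invariant ideals $\widetilde{J_n}=\{0\}\oplus J_n$, and whose quotient $D/\widetilde{J}$ (with $\widetilde{J}=\{0\}\oplus J=\overline{\bigcup\widetilde{J_n}}$, using \autoref{L_2X05IdAlgInt} to identify the closure) receives an \emph{injective} $G$-morphism from $A$, namely $a\mapsto(a,c)+\widetilde J$ for any lift $c$. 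Apply the hypothesis to this injective map to get $n$ and a $G$-morphism into $D/\widetilde{J_n}$; composing with the $G$-morphism $D/\widetilde{J_n}\to C/J_n$ induced by the second coordinate projection $D\to C$ yields the desired partial lift of the original $\ph$, because the diagram commutes by construction. The projective version is the same with a single ideal.

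\medskip\noindent\textbf{Reduction to surjective $\ph$.}
For this I would instead enlarge the bottom-right algebra. Given $\ph\colon A\to C/J$, form $C'=C\oplus A$ with $G$-action $\gm\oplus\af$, put $J_n'=J_n\oplus\{0\}$, so $J'=\overline{\bigcup J_n'}=J\oplus\{0\}$ and $C'/J'\cong (C/J)\oplus A$; then the $G$-morphism $\ph'\colon A\to C'/J'$, $a\mapsto(\ph(a),a)$, is injective, and one more modification (quotienting $C'$ by the $G$-invariant ideal generated by elements of the form $(0,0)$... more precisely, replacing $C'$ by the subalgebra generated by the image of a lift together with $C$) makes the induced map surjective; the standard trick is: let $C''\subset C'$ be the $C^*$-subalgebra generated by $J_1'\cup J_2'\cup\cdots$ together with $\{(c,0):c\in C\}$ and a set of lifts of $\ph(A)$ — this is $G$-invariant since everything in sight is — and arrange that $C''\to C''/J''$ hits exactly the image of $\ph'$. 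Then a partial lift of the surjective $\ph'$ restricts/corestricts to a partial lift of $\ph$. I would present this more cleanly following \cite{Bla2004}: take $C''=\pi'^{-1}(\ph'(A))$ inside $C'$, which is $G$-invariant, contains $J'$, and has $C''/J'=\ph'(A)\cong A$, so $\ph'\colon A\to C''/J'$ is bijective hence in particular surjective; applying the hypothesis and composing with $C''\hookrightarrow C'\to C$ gives the lift for $\ph$.

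\medskip\noindent\textbf{Main obstacle.}
The arguments are routine diagram chases once the pullback and preimage constructions are in place; the one genuinely technical point — and the reason \autoref{L_2X05IdAlgInt} was stated first — is verifying in the semiprojective case that forming pullbacks or preimages commutes with taking the closure of an increasing union of ideals, i.e.\ that $\widetilde J=\overline{\bigcup\widetilde{J_n}}$ and $J'=\overline{\bigcup J_n'}$ with the $J_n'$ etc.\ the right ideals. For the $C\oplus A$ construction this is immediate, but for the pullback $D$ one must check $D\cap\overline{\bigcup(\{0\}\oplus I_n)}=\overline{\bigcup(D\cap(\{0\}\oplus I_n))}$, which is exactly \autoref{L_2X05IdAlgInt} applied inside $A\oplus C$. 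The equivariance adds nothing beyond observing that every ideal, subalgebra, and map we write down is manifestly built from $G$-equivariant data, so carries the evident $G$-action and the maps are $G$-morphisms; I would remark on this once rather than belabor it. Finally, all constructions preserve separability (countable generation of the relevant subalgebras) and, in the unital setting, unitality of $C$ and of the lift, so the statement holds verbatim in all four variants.
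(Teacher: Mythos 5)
Your proposal is correct and follows essentially the same Blackadar-style reduction as the paper's proof: enlarge the target by a direct sum with $A$ (in your write-up packaged as the pullback $A\times_{C/J}C$, respectively $C'=C\oplus A$ followed by the preimage $(\pi')^{-1}(\ph'(A))$) to force injectivity and surjectivity, observe that all algebras, ideals and maps are manifestly $G$-invariant and unital when required, and use \autoref{L_2X05IdAlgInt} (here actually trivial, since the ideals already lie in the subalgebra) to identify the closure of the union of ideals. The only caveat is that the ``both'' case does not formally follow from doing the two reductions ``independently''; it is nevertheless covered because your second construction produces a map $\ph'$ that is bijective, which is exactly how the paper's second step handles it.
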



\begin{proof}
We give the proof for equivariant \sjy{} in the unital category.
The other cases are similar but slightly simpler.

Throughout,
let the notation be as in \autoref{D_EqSj}.

We first prove
the result for the restriction~(\ref{P_SjInjSurj-In}).
So assume that $C$, $J_1 \subset J_2 \subset \cdots \subset C$,
$J$,
quotient maps $\pi_n \colon C / J_n \to C / J$,
and $\ph \colon A \to C / J$, all as in \autoref{D_EqSj},
are given.
The following diagram shows the algebras and maps to be constructed:
%
\[
\xymatrix{
    & A \oplus C \ar[r]^-{\rh} \ar[d] & C \ar[d]  \\
    & A \oplus C / J_n \ar[r]^-{\rh_n} \ar[d]^{\id_A \oplus \pi_n}
        & C / J_n \ar[d]^{\pi_n} \\
A \ar[r]^<<<<<<{\mu} \ar@{-->}[ru]^{\nu} \ar@/_1.5pc/[rr]_{\ph}
    & A \oplus C / J \ar[r]^-{\rh_{\infty}}  & C / J.
}
\]
%
Equip $A \oplus C$,
$A \oplus C / J_n$ for $n \in \N$,
and $A \oplus C / J$
with the direct sum actions of~$G$.
Let $\rh \colon A \oplus C \to C$,
$\rh_n \colon A \oplus C / J_n \to C / J_n$ for $n \in \N$,
and $\rh_{\infty} \colon A \oplus C / J \to C / J$
be the projections on the second summand.
Define $\mu \colon A \to A \oplus C / J$ by
$\mu (a) = (a, \ph (a))$
for $a \in A$.
Then $\mu$ is a unital injective $G$-morphism
such that $\rh_{\infty} \circ \mu = \ph$.
By hypothesis,
there are $n \in \N$
and a unital $G$-morphism $\nu \colon A \to A \oplus C / J_n$
such that $(\id_A \oplus \pi_n) \circ \nu = \mu$.
Then the map $\ps = \rh_n \circ \nu$ is a unital $G$-morphism
such that $\pi_n \circ \nu = \ph$.
This completes the proof of~(\ref{P_SjInjSurj-In}).

We now prove that the condition is equivalent when both
restrictions (\ref{P_SjInjSurj-Su}) and~(\ref{P_SjInjSurj-In})
are applied.
It follows that the condition is also equivalent
when only~(\ref{P_SjInjSurj-Su})
is applied.

So let the notation be as before,
and assume in addition that $\ph$ is injective.
The following diagram shows the algebras and maps to be constructed:
%
\[
\xymatrix{
    & D \ar[r]^{\rh} \ar[d] & C \ar[d] \ar@/_-2pc/[dd]^{\pi} \\
    & D / I_n \ar[r]^{\rh_n} \ar[d]^{\kp_n} & C / J_n \ar[d]^{\pi_n} \\
A \ar[r]^{\mu} \ar@{-->}[ru]^{\nu} \ar@/_1.5pc/[rr]_{\ph}
    & D / I \ar[r]^{\rh_{\infty}}  & C / J.
}
\]
%

Let $\pi \colon C \to C / J$ be the quotient map.
Set $D = \pi^{- 1} (\ph (A))$ and $I = D \cap J$.
For $n \in \N$ set $I_n = D \cap J_n$
and let $\kp_n \colon D / I_n \to D / I$
be the quotient map.
Then ${\overline{\bigcup_{n = 1}^{\infty} I_n}} = I$
by \autoref{L_2X05IdAlgInt}.

Let $\rh \colon D \to C$ be the inclusion.
Then $\rh$ drops to a \stHm{} $\rh_n \colon D / I_n \to C / J_n$
for every $n \in \N$,
and to a \stHm{} $\rh_{\infty} \colon D / I \to C / J$.
All these maps are injective.
Clearly the range of $\ph$ is contained in $\rh_{\infty} (D / I)$,
so there is a \stHm{} $\mu \colon A \to D / I$
such that $\rh_{\infty} \circ \mu = \ph$.
This \stHm{} is injective because $\ph$ is
and surjective by the definition of~$D$.
The hypothesis implies that
there are $n \in \N$ and $\nu \colon A \to D / I_n$
such that $\kp_n \circ \nu = \mu$.
Then the map $\ps = \rh_n \circ \nu$ satisfies $\pi_n \circ \nu = \ph$.
\end{proof}


It is a standard result in the theory of \sjy{}
(contained in Lemma 14.1.6 and Theorem 14.1.7 of \cite{Lor1997})
that for a nonunital \ca~$A$ the following are equivalent:
\begin{enumerate}
\item\label{X907SyU-NU}
$A$ is \sj.
\item\label{X907SyU-InU}
$\widetilde{A}$ is \sj{} in the unital category.
\item\label{X907SyU-Uniz}
$\widetilde{A}$ is \sj.
\end{enumerate}
In the equivariant case,
the equivalence of all three conditions holds when the group~$G$ is
compact.
The proof of the analog of the implications
from (\ref{X907SyU-NU}) to~(\ref{X907SyU-Uniz})
and from (\ref{X907SyU-InU}) to~(\ref{X907SyU-Uniz})
breaks down when the trivial action of $G$ on~$\C$
is not \sj{} in the nonunital category,
but the remaining implications hold in general.
The trivial action on $\C$ is always
\sj{} in the {\emph{unital}} category,
but we will show in \autoref{P_SjTriv} that it is \sj{}
in the nonunital category only if $G$ is compact.


\begin{lem} \label{P_SjNoUnit}
Let $\GAa$ be a separable \ga,
with $A$ nonunital.
Then $A$ is \eqv{ly} \sjpj{} if and only if
${\widetilde{A}}$ is \eqv{ly} \sjpj{}
in the unital category.
\end{lem}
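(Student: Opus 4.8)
The plan is to mimic the classical (nonequivariant) proof from Lorings book, keeping track of the group action throughout, and splitting into the two directions.

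For the forward direction, suppose $A$ is \eqv{ly} \sjpj{}; we want ${\widetilde A}$ \eqv{ly} \sjpj{} in the unital category. Given a unital test algebra $(G, C, \gm)$ with the increasing ideals $J_1 \subset J_2 \subset \cdots$, their closure $J$, the quotient maps $\pi_n$, and a \emph{unital} $G$-morphism $\ph \colon {\widetilde A} \to C/J$, restrict $\ph$ to $A$ to get a (non-unital) $G$-morphism $A \to C/J$. By hypothesis it lifts to some $G$-morphism $\ps_0 \colon A \to C/J_n$ with $\pi_n \circ \ps_0 = \ph|_A$. The only issue is that $\ps_0$ need not be unital, so one extends it to ${\widetilde A}$ by sending the adjoined unit to $1_{C/J_n}$. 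Equivariance is automatic because $\gm$ fixes $1_{C/J_n}$ (actions are by unital automorphisms) and the unit of ${\widetilde A}$ is $G$-fixed; \stHm{}-property is the standard unitization computation; and $\pi_n \circ \ps = \ph$ since both sides agree on $A$ and send $1 \mapsto 1$. For the \sj{} case, if $n$ was chosen we are done; there is no additional subtlety since $\pi_n(1_{C/J_n}) = 1_{C/J}$. (For the \pj{} case the same argument works with a single ideal $J$.)

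For the reverse direction, suppose ${\widetilde A}$ is \eqv{ly} \sjpj{} in the unital category; we want $A$ \eqv{ly} \sjpj{}. Given a (not necessarily unital) test algebra $(G, C, \gm)$, the increasing $G$-invariant ideals, and a $G$-morphism $\ph \colon A \to C/J$, pass to unitizations: form $\widetilde{C/J_n} = (C/J_n)^{\sim}$ and note $\widetilde{C/J} = (C/J)^{\sim}$, each a unital $G$-algebra, with unital quotient maps ${\widetilde \pi_n} \colon \widetilde{C/J_n} \to \widetilde{C/J}$. One must check this is a legitimate test system, i.e. exhibit a single unital $G$-algebra $(G, \widetilde C, {\widetilde \gm})$ with increasing $G$-invariant ideals whose quotients are the $\widetilde{C/J_n}$: indeed $\widetilde{C} = C^{+}$ with the same ideals $J_1 \subset J_2 \subset \cdots \subset C \subset C^{+}$ works, since $C^{+}/J_n \cong (C/J_n)^{\sim}$ and ${\overline{\bigcup J_n}} = J$, and the $G$-action $\gm^{+}$ fixes the adjoined unit. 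The unital extension ${\widetilde \ph} \colon {\widetilde A} \to \widetilde{C/J}$ of $\ph$ is a unital $G$-morphism, so by hypothesis it lifts to a unital $G$-morphism ${\widetilde \ps} \colon {\widetilde A} \to \widetilde{C/J_n}$ with ${\widetilde \pi_n} \circ {\widetilde \ps} = {\widetilde \ph}$. Now restrict ${\widetilde \ps}$ to $A$: the point is that ${\widetilde \ps}(A)$ lands in $C/J_n$ (not merely in its unitization), because $\pi_n\bigl({\widetilde\ps}(a)\bigr) = {\widetilde\ph}(a) = \ph(a) \in C/J$ and the scalar part of ${\widetilde\ps}(a)$ is detected by the unital character of $\widetilde{C/J_n}$ composed with ${\widetilde\pi_n}$, hence equals the scalar part of $\ph(a)$, which is $0$ since $\ph(a) \in C/J$. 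Thus $\ps := {\widetilde\ps}|_A$ is the desired $G$-morphism $A \to C/J_n$ with $\pi_n \circ \ps = \ph$.

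I expect the main obstacle to be the bookkeeping in the reverse direction: verifying that $\widetilde{C/J_n}$ genuinely arises as a quotient system of one unital $G$-algebra by an increasing chain of $G$-invariant ideals (so that the unital-category hypothesis applies), and checking carefully that the lift ${\widetilde\ps}$ restricts back into $A$ rather than only into ${\widetilde A}$. The equivariance statements are routine once one observes that every ingredient (adjoined units, the actions $\gm^{+}$, the canonical characters) is compatible with $G$, and the \stHm{} bookkeeping is exactly as in the nonequivariant proof in Lemma 14.1.6 and Theorem 14.1.7 of \cite{Lor1997}; the compactness of $G$ is \emph{not} needed for this lemma precisely because we only ever extend morphisms by fixing adjoined units, never by adding a unit to an ideal.
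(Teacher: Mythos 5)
Your proposal is correct and follows essentially the same route as the paper's proof: lift the restriction $\ph|_A$ and extend by sending the adjoined unit to the unit in the forward direction, and in the reverse direction unitize the test system to $C^{+}$ (with the same ideals $J_n$ and $C^{+}/J_n \cong (C/J_n)^{+}$), lift the unital extension $\ph^{+}$, and use the canonical character (via $\nu_{\infty} \circ \pi_n^{+} = \nu_n$) to see that the lift maps $A$ into $C/J_n$. The only caveat is notational: one should write $(C/J_n)^{+}$ rather than $(C/J_n)^{\sim}$, since a new unit is adjoined even if the quotient is already unital, exactly as the paper remarks.
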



\begin{proof}
We give the proof for \eqv{} \sjy.
The proof for \eqv{} projectivity is similar but easier.
We use the notation of \autoref{D_EqSj}.

Since $A$ is nonunital,
we have ${\widetilde{A}} = A^{+}$.

First assume that $A$ is \eqsj,
and that $C$ and $\ph \colon A^{+} \to C / J$
are unital.
By \eqv{} \sjy{} of~$A$,
there are $n \in \N$ and $\ps_0 \colon A \to C / J_n$
such that $\pi_n \circ \ps_0 = \ph |_A$.
Then the formula
$\ps (a + \ld \cdot 1_{A^{+}}) = \ps_0 (a) + \ld \cdot 1_{C / J_n}$,
for $a \in A$ and $\ld \in \C$,
defines a $G$-morphism $\ps \colon A^{+} \to C / J$
such that $\pi_n \circ \ps = \ph$.
We have shown that ${\widetilde{A}}$ is \eqsj{}
in the unital category.

Now assume that $A^{+}$ is \eqsj{} in the unital category,
and in the notation of \autoref{D_EqSj}
take $C$ and $\ph \colon A \to C / J$ to be not necessarily unital.
We have obvious isomorphisms $C^{+} / J_n \cong (C / J_n)^{+}$
for $n \in \N$ and $C^{+} / J \cong (C / J)^{+}$.
(We add a new unit even if $C$ is already unital.)
Let $\nu_n \colon C^{+} / J_n \to \C$
for $n \in \N$,
and  $\nu_{\infty} \colon C^{+} / J \to \C$,
be the maps associated with the unitizations.
Define a unital $G$-morphism $\ph^{+} \colon A^{+} \to C^{+} / J$
by $\ph^{+} (a + \ld \cdot 1_{A^{+}})
  = \ph (a) + \ld \cdot 1_{C^{+} / J}$
for $a \in A$ and $\ld \in \C$.
For $n \in \N$,
similarly define $\pi_n^+{} \colon C^{+} / J_n \to C^{+} / J$,
giving $\nu_{\infty} \circ \pi_n^{+} = \nu_n$.
By hypothesis,
there are $n \in \N$ and $\ps_0 \colon A^{+} \to C^{+} / J_n$
such that $\pi_n^{+} \circ \ps_0 = \ph^{+}$.

We claim that $\ps_0 (A) \subset C / J_n$.
We have
\[
\nu_n \circ \ps_0
  = \nu_{\infty} \circ \pi_n^{+} \circ \ps_0
  = \nu_{\infty} \circ \ph^{+},
\]
which vanishes on~$A$.
The claim follows.
So $\ps = \ps_0 |_A \colon A \to C / J_n$
is a $G$-morphism such that $\pi_n \circ \ps = \ph$.
\end{proof}


\begin{lem}\label{P_SjUnitDiffCats}
Let $\GAa$ be a separable \ga,
with $A$ unital.
If $A$ is \eqv{ly} \sj,
then $A$ is \eqv{ly} \sj{} in the unital category.
If $G$ is compact, then the converse also holds.
\end{lem}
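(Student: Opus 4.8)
The plan is to prove the two implications of \autoref{P_SjUnitDiffCats} separately, using the constructions already set up in \autoref{P_SjNoUnit} and the flexibility afforded by \autoref{P_SjInjSurj}.

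First I would handle the forward implication: if $A$ is \eqsj{} (in the nonunital category, i.e. without any unitality hypotheses), then $A$ is \eqsj{} in the unital category. Here one is given a unital \ga{} $(G,C,\gm)$, an increasing sequence of $G$-invariant ideals $J_1\subset J_2\subset\cdots$ with union $J$, and a unital $G$-morphism $\ph\colon A\to C/J$, and one must produce a \emph{unital} lift $\ps\colon A\to C/J_n$ for some $n$. The point is that the unital-category lifting problem is a \emph{special case} of the general one: simply forget that $C$ and $\ph$ are unital. Equivariant \sjy{} of $A$ gives some $n$ and a (not necessarily unital) $G$-morphism $\ps\colon A\to C/J_n$ with $\pi_n\circ\ps=\ph$. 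But then $\ps(1_A)$ is a projection in $C/J_n$ mapping to $\pi_n(\ps(1_A))=\ph(1_A)=1_{C/J}$. Since $C/J_n$ is unital (being a quotient of the unital algebra $C$) and $\pi_n$ is unital, $1_{C/J_n}-\ps(1_A)$ is a projection lying in $\ker\pi_n=J/J_n$. This projection need not vanish, so $\ps$ need not be unital on the nose; the standard fix is that $\ps(1_A)$ and $1_{C/J_n}$ are Murray--von Neumann equivalent projections after passing to a larger $n$, or one corners down. Concretely, compress: the corner $p(C/J_n)p$ with $p=\ps(1_A)$ is unital with unit $p$, maps into $C/J$ hitting all of $\ph(A)$'s target, and $\ps$ corestricts to a unital $G$-morphism $A\to p(C/J_n)p$. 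Composing with the inclusion $p(C/J_n)p\hookrightarrow C/J_n$ is not unital, so instead one observes that the inclusion $\ph(A)\subset C/J$ together with $p\mapsto 1$ lets one replace $C/J_n$ by the unital algebra $p(C/J_n)p + (1-\pi_n^{-1}(\ldots))$; cleaner is to note that one may as well have assumed $\ph$ is an isomorphism onto $C/J$ (\autoref{P_SjInjSurj}) and that $C/J$ is unital, whence $C$ can be replaced by the unital algebra $C$, and then $\ps(1_A)$ is a full projection; passing to $C/J_n$ for large $n$ the projections $\ps(1_A)$ and $1$ become equivalent by a $G$-invariant partial isometry (stability of the relation under inductive limits, equivariantly). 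I expect this adjustment of the unit to be the main obstacle, and the way to dispose of it cleanly is the same trick used in the nonunital theory: add a summand. Form $C\oplus\C$ with the trivial action on the $\C$ summand, ideals $J_n\oplus 0$, morphism $a\mapsto(\ph(a),\text{scalar part})$; but since $\ph$ is already unital this is not needed. The honest resolution: because $\ph$ is unital, $\ps(1_A)$ is a projection in $C/J_n$ with $\pi_n(\ps(1_A))=1_{C/J}$; the element $1_{C/J_n}-\ps(1_A)$ lies in the ideal $\overline{\bigcup_m J_m/J_n}$ and is a projection, hence (again by the inductive-limit stability of projections, which holds equivariantly with the trivial action on $\C$ generated by this central projection) it is zero after increasing $n$. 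Thus for large $n$ the lift $\ps$ is automatically unital.

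Second, the converse under the assumption that $G$ is compact. Here the strategy is to reduce to \autoref{P_SjNoUnit}, but $A$ is unital, so ${\widetilde A}=A$ and that lemma does not directly apply. Instead I would argue as in the classical (nonequivariant) case recorded just before \autoref{P_SjNoUnit}: given $(G,C,\gm)$, ideals $J_n$, $J$, and a $G$-morphism $\ph\colon A\to C/J$ with $A$ unital but $C$, $\ph$ not assumed unital, pass to unitizations. Using the obvious $G$-equivariant identifications $C^+/J_n\cong(C/J_n)^+$ and $C^+/J\cong(C/J)^+$ (valid even if $C$ is unital, since we adjoin a new unit), form the unital $G$-morphism $\ph^+\colon A=A^+ \!/\!\sim \to C^+/J$ --- more precisely extend $\ph$ to a unital $G$-morphism on $A$ (which is already unital) landing in $C^+/J$ by sending $1_A\mapsto 1_{C^+/J}$; this requires writing $A$ as a subalgebra of $A^+$ and using that $1_A$ is a projection in $A^+$, so $\ph^+$ is defined on the unital algebra $1_A A^+ 1_A + \C 1_{A^+}$ --- the cleanest formulation is to apply the already-proven \autoref{P_SjNoUnit} to $A^+$ in place of $A$. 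Since $G$ is compact, \autoref{P_SjNoUnit}'s proof applies verbatim when $A$ is nonunital; but here $A$ is unital, so one uses instead the direct argument: \eqsjy{} of $A$ in the unital category applied to the unital data $(C^+,\, J_n,\, J,\, \ph^+)$ produces $n$ and a unital $G$-morphism $\ps^+\colon A\to C^+/J_n$ lifting $\ph^+$. Composing with the character $\nu_n\colon C^+/J_n\to\C$ coming from the unitization gives $\nu_n\circ\ps^+ = \nu_\infty\circ\pi_n^+\circ\ps^+ = \nu_\infty\circ\ph^+$, which is the character of $A$ (the scalar part), and in particular does not vanish on $A$ unless $\ph$ was the zero map --- the correct conclusion is that $\ps^+$ maps $A$ into $C/J_n$ precisely when $\nu_n\circ\ps^+$ vanishes on the appropriate complement; but $A$ is unital so $\ps^+(1_A)=1_{C^+/J_n}$ forces $\nu_n(\ps^+(1_A))=1\neq 0$, and one must instead use that $\ph(A)\subset C/J$ (the non-unitized quotient): then $\ps^+(A)$ maps under $\pi_n^+$ into $C/J\subset C^+/J$, and a diagram chase with the ideals $J_n\subset J_n^+:=\ker(C^+\to\C)\cap(\text{stuff})$ shows $\ps^+(A)\subset C/J_n$ after possibly enlarging $n$. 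This is where compactness of $G$ is genuinely used: one needs a $G$-invariant approximate identity / conditional expectation to control the extra $\C$-summand, exactly as the text flagged that "the remaining implications hold in general" but this one can fail otherwise (the trivial action on $\C$ being \sj{} in the unital but not the nonunital category when $G$ is noncompact). The main obstacle in this direction is precisely bookkeeping the relationship between $J_n$ as an ideal of $C$ versus of $C^+$ and verifying that the lift's range lands in the smaller (non-unitized) quotient; I would dispose of it by invoking \autoref{P_SjNoUnit} applied to $A^+$ (legitimate since, $G$ being compact, that lemma holds) combined with \autoref{P_SjInjSurj} to assume $\ph$ injective, so that $C$ may be replaced by $D=\pi^{-1}(\ph(A))$ and the unitization bookkeeping becomes the one already carried out in the proof of \autoref{P_SjNoUnit}.

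In summary, the forward implication is a near-triviality (forget unitality, then fix the unit using stability of projections under equivariant inductive limits), while the converse is the substantive half and is obtained by the unitization trick of the classical theory, with compactness of $G$ entering through the existence of $G$-invariant approximate identities needed to transfer the lift from $C^+/J_n$ back down to $C/J_n$. I would present the converse by explicitly reducing to \autoref{P_SjNoUnit} (for $A^+$) plus \autoref{P_SjInjSurj}, so as not to repeat the unitization computation already in the text.
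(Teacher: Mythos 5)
Your forward implication is essentially correct and is exactly the route the paper takes (it simply cites the first paragraph of Loring's Lemma 14.1.6): take any lift $\ps \colon A \to C/J_n$, observe that $1_{C/J_n} - \ps(1_A)$ is a projection lying in $\ker \pi_n = {\overline{\bigcup_m J_m/J_n}}$, and since the norm of its image in $C/J_m$ equals $\dist\big(1 - \ps(1_A), \, J_m/J_n\big) \to 0$, that image is a projection of norm less than~$1$, hence zero, for $m$ large; composing with $C/J_n \to C/J_m$ gives a unital $G$-morphism lifting $\ph$. Note that no equivariant input is needed here: the defect projection is annihilated in a further quotient, not lifted, so your appeal to "stability of projections under equivariant inductive limits" (and the earlier detour through Murray--von Neumann equivalence and corners) is superfluous.

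The converse is the substantive half, and your argument for it does not work. First, the map $\ph^{+} \colon A \to C^{+}/J$ you propose, "extend $\ph$ by sending $1_A \mapsto 1_{C^{+}/J}$," is not a \stHm{}: since $\ph(a) \big( 1_{C^{+}/J} - \ph(1_A) \big) = 0$ for all $a \in A$, a unital homomorphic modification of $\ph$ of this kind requires a character of $A$ to fill the complementary corner, and a general unital $A$ (for example $M_2$ or $\OA{2}$) has no characters. Second, even granting a unital lift $\ps^{+} \colon A \to C^{+}/J_n$, its range can never be pushed into $C/J_n$ by enlarging $n$, because $\ps^{+}(1_A) = 1_{C^{+}/J_n} \notin C/J_n$; the "diagram chase" you invoke would prove something false. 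Third, \autoref{P_SjNoUnit} cannot be "applied to $A^{+}$": that lemma requires the algebra in question to be nonunital, and even a re-proved version of its second half would need the hypothesis that $A^{+} \cong A \oplus \C$ is \eqsj{} in the unital category, which is precisely the content you have not supplied. The correct mechanism, and the actual place where compactness enters (the paper's second paragraph of Loring 14.1.6, with the equivariant version of Loring's Lemma 14.1.5), is equivariant lifting of the invariant projection $\ph(1_A)$: because the trivial action of a second countable compact group on $\C$ is \eqsj{} (Corollary~1.9 of \cite{Phi2012}), one lifts $\ph(1_A)$ to a $G$-invariant projection $q \in C/J_n$ for some $n$, compresses to the $G$-invariant corner $q (C/J_n) q$, which maps onto $\ph(1_A) (C/J) \ph(1_A) \supset \ph(A)$ with ideals $q (J_m/J_n) q$, applies unital-category equivariant \sjy{} of $A$ there, and composes with the inclusion of the corner. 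A $G$-invariant approximate identity or conditional expectation, which you name as the key ingredient, is not what is used.
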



\begin{proof}
The proof is essentially the same
as that of Lemma 14.1.6 of \cite{Lor1997}.
In the first paragraph of the proof there,
$B_l$ should be $C_l$
and it is $1 - \ph_l (1)$, not $\ph_l (1) - 1$,
that is a projection.
In the second paragraph of the proof there,
we need the equivariant version of Lemma 14.1.5 of \cite{Lor1997};
it follows from Corollary~1.9 of \cite{Phi2012}.
\end{proof}


For compact groups,
we now obtain the analog of the equivalence of the first two parts
in Theorem~14.1.7 of \cite{Lor1997}.


\begin{prp}
\label{P_SjUnitNoUnit}
Let $G$ be a second countable compact group,
and let $A$ be a separable \ga.
Then $A$ is \eqsj{} if and only if ${\widetilde{A}}$ is.
\end{prp}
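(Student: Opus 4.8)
The plan is to reduce to the unital-category statements already available in Lemmas~\ref{P_SjNoUnit} and~\ref{P_SjUnitDiffCats}, treating the unital and nonunital cases of $A$ separately. If $A$ is nonunital, then $\widetilde{A} = A^{+}$, and by \autoref{P_SjNoUnit} the algebra $A$ is \eqsj{} if and only if $A^{+}$ is \eqsj{} in the unital category; so it remains only to see that for a unital separable \ga{} (here $A^{+}$), being \eqsj{} in the unital category is equivalent to being \eqsj{} in the ordinary (nonunital) category. That equivalence is exactly \autoref{P_SjUnitDiffCats}, whose converse direction uses compactness of $G$ — which we have by hypothesis. Chaining these gives the nonunital case.

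If $A$ is already unital, then $\widetilde{A} = A$, so the assertion ``$A$ is \eqsj{} iff $\widetilde{A}$ is'' is trivially true and there is nothing to prove. Thus the only content is in the nonunital case, and the proof is essentially a two-line application of the two preceding lemmas.

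The step I expect to be the (already-absorbed) main obstacle is the converse in \autoref{P_SjUnitDiffCats} — passing from \eqsjy{} in the unital category back to \eqsjy{} in the nonunital category for a unital \ga. That is where compactness of $G$ genuinely enters: one must, given a lifting problem with possibly nonunital $C$ and possibly nonunital $\ph$, adjoin units compatibly with the $G$-actions and then, after solving the unital problem, cut down by the projection coming from $1 - \psi_0(1)$ to land back in the nonunital algebra, invoking the equivariant analog of Lemma~14.1.5 of~\cite{Lor1997} (available from Corollary~1.9 of~\cite{Phi2012}) to move the lift by a unitary close to~$1$. But since \autoref{P_SjUnitDiffCats} is assumed here, the present proof simply cites it.

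Concretely, the write-up is: ``If $A$ is unital there is nothing to prove, since then $\widetilde{A} = A$. If $A$ is nonunital, then $\widetilde{A} = A^{+}$, and by \autoref{P_SjNoUnit}, $A$ is \eqsj{} if and only if $A^{+} = \widetilde{A}$ is \eqsj{} in the unital category. Since $A^{+}$ is unital and $G$ is second countable and compact, \autoref{P_SjUnitDiffCats} shows that $A^{+}$ is \eqsj{} in the unital category if and only if it is \eqsj{} in the nonunital category. Combining the two equivalences completes the proof.''
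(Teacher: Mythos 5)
Your proposal is correct and is essentially the paper's own proof, which simply combines \autoref{P_SjNoUnit} and \autoref{P_SjUnitDiffCats} (with the unital case being trivial since ${\widetilde{A}} = A$ there). Your additional commentary on where compactness enters in the converse of \autoref{P_SjUnitDiffCats} is accurate but already absorbed in that cited lemma, so nothing further is needed.
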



\begin{proof}
Combine \autoref{P_SjNoUnit} and \autoref{P_SjUnitDiffCats}.
\end{proof}


The paper \cite{Phi2012}
contains many examples of \eqsj{} \ca{s}.
In particular, it is shown that for a \sj{} \ca{} $A$
and a second countable compact group $G$,
the trivial action of $G$ on $A$ is \sj{}
(Corollary~1.9 of \cite{Phi2012}).
In the same way one may prove the analog for the \pj{} case,
and we include the short argument for completeness.
The following lemma is an immediate consequence
of Lemma~1.6 of \cite{Phi2012}.


\begin{lem}
\label{P_FixToFix}
Let $G$ be a compact group
and let $\pi \colon A \to B$
be a surjective $G$-morphism of \ga{s}.
Then the restriction of $\pi$ to the fixed point algebras is surjective,
that is, $\pi (A^G) = B^G$.
\end{lem}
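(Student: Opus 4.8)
\textbf{Proof proposal for \autoref{P_FixToFix}.}
The plan is to invoke the cited Lemma~1.6 of \cite{Phi2012}, which (as the text indicates) already handles the essential analytic content: for a compact group $G$ acting on \ca{s} and a surjective $G$-morphism $\pi \colon A \to B$, an element $b \in B^G$ can be lifted to an element of $A$ that is again fixed under the $G$-action, up to an arbitrarily small norm perturbation, or — in its strongest form — exactly. So the first thing I would do is state precisely what Lemma~1.6 of \cite{Phi2012} gives us and observe that the present statement is simply the ``global'' reformulation: $\pi$ restricts to a map $A^G \to B^G$ (this is immediate, since $\pi$ is $G$-equivariant), and Lemma~1.6 says this restriction is onto.

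Concretely, I would argue as follows. Let $b \in B^G$. Since $\pi$ is surjective, choose any $a_0 \in A$ with $\pi(a_0) = b$. Now average $a_0$ over the group using the Haar measure $\mu$: set
\[
a = \int_G \af_s(a_0)\, d\mu(s),
\]
where $\af$ denotes the action on $A$; the integral exists as a Bochner integral because $s \mapsto \af_s(a_0)$ is norm-continuous and $G$ is compact, and $a \in A^G$ because left translation in the integral is absorbed by invariance of $\mu$. Applying $\pi$ and using $G$-equivariance together with continuity of $\pi$,
\[
\pi(a) = \int_G \pi(\af_s(a_0))\, d\mu(s) = \int_G \bt_s(\pi(a_0))\, d\mu(s) = \int_G \bt_s(b)\, d\mu(s) = b,
\]
the last equality because $b$ is already $G$-fixed and $\mu$ is a probability measure. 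Hence $b \in \pi(A^G)$, and since the reverse inclusion $\pi(A^G) \subset B^G$ is clear, we get $\pi(A^G) = B^G$.

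I expect no serious obstacle here: the only technical point is the existence and $G$-invariance of the averaged element $a$, which is standard for compact-group actions on Banach spaces (continuity of $s \mapsto \af_s(a_0)$ follows from strong continuity of the action, and compactness of $G$ ensures the Bochner integral converges). If one prefers to stay strictly within the framework of the cited paper rather than reprove the averaging argument, one simply quotes Lemma~1.6 of \cite{Phi2012} directly and notes that the statement of \autoref{P_FixToFix} is its immediate consequence — which is exactly what the surrounding text announces. I would present the self-contained averaging proof, since it is short and makes the paper more readable.
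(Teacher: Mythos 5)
Your proposal is correct and matches the paper, which likewise disposes of this lemma by citing Lemma~1.6 of \cite{Phi2012}; the Haar-measure averaging of a lift $a_0$ (i.e.\ applying the canonical conditional expectation onto $A^G$ and using equivariance of $\pi$) is exactly the standard content behind that citation. No gaps: the Bochner integral exists by strong continuity of the action and compactness of $G$, and left invariance of the normalized Haar measure gives both $a \in A^G$ and $\pi(a) = b$.
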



\begin{lem}
\label{P_PjTrivAct}
Let $G$ be a second countable compact group, let $A$ be a \pj{} \ca,
and let $\io \colon G \to \Aut (A)$ be the trivial action.
Then $(G, A, \io)$ is \eqpj.
\end{lem}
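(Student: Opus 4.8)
The plan is to reduce the equivariant projective lifting problem for the trivial action to the ordinary (nonequivariant) projective lifting problem for $A$, using the fixed point algebra to "equivariantize" an ordinary lift. Concretely, suppose $(G, C, \gm)$ is a $G$-algebra, $J \subset C$ a $G$-invariant ideal with quotient map $\pi \colon C \to C/J$, and $\ph \colon A \to C/J$ a $G$-morphism. Since the action on $A$ is trivial, the image $\ph(A)$ lies in the fixed point algebra $(C/J)^G$; thus $\ph$ may be viewed as a (nonequivariant) \stHm{} $A \to (C/J)^G$.

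The key step is then to invoke \autoref{P_FixToFix}: because $G$ is compact and $\pi \colon C \to C/J$ is a surjective $G$-morphism, the restriction $\pi|_{C^G} \colon C^G \to (C/J)^G$ is surjective. Hence we have a surjective \stHm{} of \ca{s} $C^G \to (C/J)^G$ through which $\ph \colon A \to (C/J)^G$ must be lifted. Since $A$ is \pj{} in the usual sense (using \autoref{P_SjInjSurj} or the standard fact that \pjy{} allows lifting along arbitrary surjections, not just quotient maps by ideals — here $C^G \to (C/J)^G$ is a surjection of \ca{s}, so one should either cite that \pjy{} lifts along all surjective \stHm{s}, or factor through $C^G / \ker$), there is a \stHm{} $\ps_0 \colon A \to C^G$ with $(\pi|_{C^G}) \circ \ps_0 = \ph$. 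Composing with the inclusion $C^G \hookrightarrow C$ gives $\ps \colon A \to C$. Because the target of $\ps_0$ is $C^G$, the map $\ps$ automatically lands in the fixed points, so it is $G$-equivariant for the trivial action on $A$; and $\pi \circ \ps = \ph$ by construction. This exhibits $(G, A, \io)$ as \eqpj.

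The main obstacle is the mismatch between the form of the lifting problem solved by ordinary \pjy{} (lifting along $C/J \to C/J'$, or more generally along surjections) and the surjection $C^G \to (C/J)^G$ that arises here: one must be careful that $(C/J)^G$ is genuinely $C^G / (C^G \cap J)$ (which follows from \autoref{P_FixToFix} together with $C^G \cap J = J^G$ being an ideal of $C^G$), so that the relevant map really is a quotient map of a \ca{} by an ideal and ordinary \pjy{} of $A$ applies directly. A secondary routine point is verifying that a \stHm{} into a fixed point subalgebra is automatically equivariant for the trivial action, which is immediate. I do not expect the compactness of $G$ to be used anywhere except through \autoref{P_FixToFix}, which is exactly where it is essential — without it the restriction to fixed point algebras need not be surjective.
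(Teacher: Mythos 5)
Your proof is correct and is essentially the paper's argument: both hinge on \autoref{P_FixToFix} to get surjectivity of $C^G \to (C/J)^G$ and then apply ordinary projectivity of $A$, with equivariance automatic because the lift lands in $C^G$. The only cosmetic difference is that the paper first invokes \autoref{P_SjInjSurj} to reduce to the case where $\ph$ is an isomorphism (so one just needs an equivariant right inverse of a surjection $C \to A$), whereas you handle a general $\ph$ directly by observing that its image lies in $(C/J)^G$.
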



\begin{proof}
By \autoref{P_SjInjSurj},
it is enough to show that
any surjective $G$-morphism $\pi \colon C \to A$
has an \eqv{} right inverse~$\ps$.
Since $G$ acts trivially on $A$, we have $A^G = A$,
and then $\pi (C^G) = A$ by \autoref{P_FixToFix}.
We can now use \pjy{} of $A$ to get a \stHm{}
$\gm \colon A \to C^G$
such that $\pi \circ \gm = \id$.
Let $\ps$ be the composition of $\gm$
with the inclusion of $C^G$ in~$C$.
\end{proof}


\begin{rmk}\label{R_2X10_Unl}
The statement of \autoref{P_FixToFix} can fail
if $G$ is not compact.
In fact, for every (second countable) noncompact group $G$,
we construct in the proof of \autoref{P_FixPt}
a surjective $G$-morphism $\pi \colon A \to B$ such that
$\pi (A^G) \neq B^G$.
\end{rmk}


So far, we have only seen \sj{} actions of compact groups.
In the next section we will show that every countable discrete group
even admits \pj{} actions.


\section{Free and orthogonal Bernoulli shifts}
\label{Sec_BernShift}

\indent
In this section, we introduce for every countable discrete group
$G$ and \ca~$A$ a natural action,
called the free Bernoulli shift, of $G$ on
the full free product $\BFP_{g \in G} A$.
We show (\autoref{P_BernShift_EqSj_General})
that this action is \sjpj{} if $A$ is
(non\eqv{ly}) \sj.

We also investigate the orthogonal Bernoulli shift of $G$
on $\bigoplus_{g \in G} A$, that is,
the translation action of $G$ on $C_0 (G, A)$.
It seems to be much more difficult to determine
when this action is \sjpj.
In \autoref{P_OrthShift}
we give a positive answer for the special case
that $G$ is finite cyclic of order~$2^n$.


We use the following notation,
roughly as before Remark~3.1.2 of \cite{Lor1997},
for the universal \ca{} on countably many contractions.


\begin{ntn}\label{N_2X07UnivAlg}
Set
\[
{\mathcal{F}}_{\infty}
 = C^* \big\langle z_1, z_2, \ldots \mid
     {\mbox{$\| z_j \| \leq 1$ for $j \in \N$}} \big\rangle,
\]
the universal \ca{} on generators $z_1, z_2, \ldots$
with relations $\| z_j \| \leq 1$ for $j \in \N$.

Let $G$ be a countable discrete group.
Set $P_G = \BFP_{g \in G} {\mathcal{F}}_{\infty}$.
For $s \in G$ let
$\io_s \colon {\mathcal{F}}_{\infty}
   \to \BFP_{g \in G} {\mathcal{F}}_{\infty}$
be the map which sends ${\mathcal{F}}_{\infty}$
to the copy of ${\mathcal{F}}_{\infty}$
in~$P_G$
indexed by~$s$.
We identify $P_G$ with
\[
C^* \big\langle \{ z_{s, k} \colon {\mbox{$s \in G$ and $k \in \N$}} \}
    \mid
      {\mbox{$\| z_{s, k} \| \leq 1$ for $s \in G$ and $k \in \N$}}
     \big\rangle,
\]
in such a way that
$\io_s (z_k) = z_{s, k}$ for $s \in G$ and $k \in \N$.
\end{ntn}


\begin{lem}\label{L_2X07ExistFBS}
Let $A$ be a \ca{} and let $G$ be a discrete group.
For $s \in G$ let $\io_{A, s} \colon A \to \BFP_{g \in G} A$
be the map which sends $A$
to the copy of $A$
in $\BFP_{g \in G} A$
indexed by~$s$.
Then there exists a unique action
$\ta^A \colon G \to \Aut \big( \BFP_{g \in G} A \big)$
such that $\ta^A_g (\io_{A, s} (a)) = \io_{A, g s} (a)$
for all $g, s \in G$ and $a \in A$.
For every \stHm{} $\ph \colon A \to B$ between \ca{s} $A$ and~$B$,
the corresponding \stHm{}
$\BFP_{g \in G} \; \ph \colon \BFP_{g \in G} A \to \BFP_{g \in G} B$
is equivariant.
Moreover,
for every $G$-algebra $(G, C, \gm)$
and every \stHm{} $\ph \colon A \to C$,
there is a unique $G$-morphism
$\ps \colon \BFP_{g \in G} A \to C$
such that
$\ps \circ \io_{A, s} = \gm_s \circ \ph$ for all $s \in G$.
\end{lem}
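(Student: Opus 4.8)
The plan is to deduce all three assertions from the universal property of the full free product: for any \ca{} $D$ and any family of \stHm{s} $\{ \sm_s \colon A \to D \}_{s \in G}$, there is a unique \stHm{} $\BFP_{g \in G} A \to D$ whose composition with $\io_{A, s}$ equals $\sm_s$ for every $s \in G$; moreover the copies $\io_{A, s} (A)$, for $s \in G$, generate $\BFP_{g \in G} A$ as a \ca, so any two \stHm{s} out of $\BFP_{g \in G} A$ that agree on each copy are equal. Everything below is a bookkeeping exercise on top of these two facts.

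First I would construct the action. For a fixed $g \in G$, apply the universal property to the family $\{ \io_{A, g s} \colon A \to \BFP_{h \in G} A \}_{s \in G}$ to obtain a unique \stHm{} $\ta^A_g \colon \BFP_{h \in G} A \to \BFP_{h \in G} A$ with $\ta^A_g \circ \io_{A, s} = \io_{A, g s}$ for all $s \in G$. Comparing restrictions to each copy gives $\ta^A_g \circ \ta^A_{g'} = \ta^A_{g g'}$ and $\ta^A_e = \id$, so each $\ta^A_g$ is an automorphism with inverse $\ta^A_{g^{-1}}$ and $g \mapsto \ta^A_g$ is a homomorphism $G \to \Aut \big( \BFP_{g \in G} A \big)$; continuity is automatic since $G$ is discrete. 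Uniqueness of the action is immediate, as its value on each $\io_{A, s} (A)$ is prescribed and these generate. For the functoriality claim, recall $\BFP_{g \in G} \ph$ is by definition the \stHm{} associated to $\{ \io_{B, s} \circ \ph \}_{s \in G}$, so $(\BFP_{g \in G} \ph) \circ \io_{A, s} = \io_{B, s} \circ \ph$; then both $(\BFP_{g \in G} \ph) \circ \ta^A_g$ and $\ta^B_g \circ (\BFP_{g \in G} \ph)$ restrict to $\io_{B, g s} \circ \ph$ on the $s$-th copy, hence coincide, so $\BFP_{g \in G} \ph$ is \eqv.

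Finally, given a \ga{} $(G, C, \gm)$ and a \stHm{} $\ph \colon A \to C$, I would apply the universal property to the family $\{ \gm_s \circ \ph \colon A \to C \}_{s \in G}$, obtaining the unique \stHm{} $\ps \colon \BFP_{g \in G} A \to C$ with $\ps \circ \io_{A, s} = \gm_s \circ \ph$ for all $s \in G$. Equivariance follows by the same pattern: using $\gm_{g s} = \gm_g \circ \gm_s$, both $\ps \circ \ta^A_g$ and $\gm_g \circ \ps$ send $\io_{A, s} (a)$ to $\gm_{g s} (\ph (a))$, so they agree on all copies and hence are equal. Uniqueness of $\ps$ as a $G$-morphism with the stated property is clear, since such a map is already determined by its restrictions to the copies $\io_{A, s} (A)$. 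There is no genuine obstacle here; the only point needing care is to keep straight, at each stage, which composites are being compared on which copy, and to invoke consistently that agreement on all the copies $\io_{A, s} (A)$ forces agreement of \stHm{s} on all of $\BFP_{g \in G} A$.
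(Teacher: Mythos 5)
Your argument is correct and is exactly the routine universal-property bookkeeping the paper has in mind: its own proof of this lemma is simply ``This is immediate.'' Your write-up just spells out those details, so no further comment is needed.
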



\begin{proof}
This is immediate.
\end{proof}


\begin{dfn}\label{D_2X07FBS}
Let $A$ be a separable \ca{} and let $G$ be a countable discrete group.
The action $\ta^A$ of \autoref{L_2X07ExistFBS}
is called the {\emph{free Bernoulli shift based on~$A$}}.
If $A = {\mathcal{F}}_{\infty}$,
so that $\BFP_{g \in G} A = P_G$,
we call it the {\emph{universal free Bernoulli shift}},
and denote it by~$\ta$.
\end{dfn}


Following Notation~\ref{N_2X07UnivAlg},
we have $\ta_s (z_{t, k}) = z_{s t, k}$ for $s, t \in G$ and $k \in \N$.

Any separable \ca~$A$ is a quotient of~${\mathcal{F}}_{\infty}$.
This is just the fact that
$A$ contains a countable set of contractive generators.
Similarly,
the action $\ta \colon G \to \Aut (P_G)$
is universal for all $G$-actions, that is,
for every separable $G$-algebra $A$
there exists a surjective $G$-morphism $P_G \to A$.


\begin{prp}
\label{P_BernShift_EqSj_General}
Let $A$ be a separable \ca{} and let $G$ be a countable discrete group.
If $A$ is \sjpj,
then the free Bernoulli shift of $G$ based on $A$ is \sjpj.
\end{prp}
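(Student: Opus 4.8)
The plan is to prove the semiprojective and projective cases together by a direct diagram chase, exploiting the universal property of the free product established in \autoref{L_2X07ExistFBS}. Suppose $A$ is \sj{} (the \pj{} case is the same with a single ideal $J$ in place of the sequence $J_1 \subset J_2 \subset \cdots$). Let $B = \BFP_{g \in G} A$, carrying the free Bernoulli shift $\ta = \ta^A$. Given a $G$-algebra $(G, C, \gm)$, an increasing sequence of $G$-invariant ideals $J_n$ with union $J$, quotient maps $\pi_n \colon C / J_n \to C / J$, and a $G$-morphism $\ph \colon B \to C / J$, I must produce $n$ and a $G$-morphism $\ps \colon B \to C / J_n$ with $\pi_n \circ \ps = \ph$.

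First I would pass to the single copy of $A$ indexed by the identity element $e \in G$. Composing $\ph$ with $\io_{A, e} \colon A \to B$ gives a \stHm{} $\ph_0 = \ph \circ \io_{A, e} \colon A \to C / J$. This is \emph{not} equivariant (the free product is the only place the $G$-action lives), but that does not matter: $A$ is \sj{} as an ordinary \ca, so by the (non\eqv{}) semiprojectivity of $A$ there exist $n \in \N$ and an ordinary \stHm{} $\ps_0 \colon A \to C / J_n$ with $\pi_n \circ \ps_0 = \ph_0$. Now the crucial point is that $C / J_n$ carries the quotient $G$-action $\gm^{(n)}$ induced by $\gm$, so $(G, C / J_n, \gm^{(n)})$ is a $G$-algebra, and the last assertion of \autoref{L_2X07ExistFBS} applies: there is a unique $G$-morphism $\ps \colon B = \BFP_{g \in G} A \to C / J_n$ such that $\ps \circ \io_{A, s} = \gm^{(n)}_s \circ \ps_0$ for all $s \in G$.

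It remains to check $\pi_n \circ \ps = \ph$. Both sides are $G$-morphisms $B \to C / J$, and $B$ is generated by the subalgebras $\io_{A, s}(A)$ for $s \in G$, so it suffices to check equality after precomposing with each $\io_{A, s}$. On the one hand $\pi_n \circ \ps \circ \io_{A, s} = \pi_n \circ \gm^{(n)}_s \circ \ps_0 = {\bar\gm}_s \circ \pi_n \circ \ps_0 = {\bar\gm}_s \circ \ph_0$, where ${\bar\gm}$ is the quotient action on $C / J$ and we used that $\pi_n$ intertwines $\gm^{(n)}$ and ${\bar\gm}$. On the other hand, since $\ph$ is equivariant and $\ta_s \circ \io_{A, e} = \io_{A, s}$ by the defining property of the free Bernoulli shift, $\ph \circ \io_{A, s} = \ph \circ \ta_s \circ \io_{A, e} = {\bar\gm}_s \circ \ph \circ \io_{A, e} = {\bar\gm}_s \circ \ph_0$. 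The two expressions agree, so $\pi_n \circ \ps = \ph$, completing the \sj{} case. The \pj{} case is obtained verbatim with one ideal $J$ and the single quotient map $\pi \colon C \to C / J$.

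There is no serious obstacle here; the argument is essentially formal once one sees that the free product's universal property promotes an ordinary lift on the single copy $\io_{A, e}(A)$ to an equivariant lift on all of $B$, and that equivariance of $\ph$ forces consistency across the other copies. The only point requiring a little care is verifying that $\ps$ is well defined as a map into $C / J_n$ rather than merely into $C / J$ — but this is exactly what the universal property in \autoref{L_2X07ExistFBS} delivers, since $\ps_0$ already lands in $C / J_n$. One should also note that $B$ is separable whenever $A$ is and $G$ is countable, so $(G, B, \ta)$ is a separable $G$-algebra and \autoref{D_EqSj} genuinely applies.
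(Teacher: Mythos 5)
Your proposal is correct and follows essentially the same route as the paper: restrict $\ph$ to the copy of $A$ at the identity, lift it using ordinary \sjpjy{} of $A$, and then use the universal property of \autoref{L_2X07ExistFBS} to promote the lift to an equivariant map on $\BFP_{g \in G} A$, checking compatibility on the generating copies via equivariance of $\ph$ and of the quotient maps. The only cosmetic difference is that you write out the semiprojective case and wave at the projective one, whereas the paper does the reverse.
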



\begin{proof}
We give the proof when $A$ is \pj.
The \sj{} case is very similar, but has bigger diagrams.

Let $\GBb$ and $(G, D, \dt)$ be $G$-algebras,
let $\pi \colon B \to D$ be a surjective $G$-morphism,
and let $\ph \colon \BFP_{g \in G} A \to D$ be a $G$-morphism.
We find
a $G$-morphism $\ps \colon \BFP_{g \in G} A \to B$
such that $\pi \circ \ps = \ph$.

Since $A$ is \pj, there is a \stHm{} $\ps_0 \colon A \to B$
such that $\pi \circ \ps_0 = \ph \circ \io_1$.
By universality of $\BFP_{g \in G} A$ (\autoref{L_2X07ExistFBS}),
there is a \stHm{}
$\ps \colon \BFP_{g \in G} A \to B$
such that $\ps \circ \io_{A, s} = \bt_s \circ \ps_0$
for all $s \in G$.
The following diagram shows some of the maps:
\[
        \xymatrix{
                & & B \ar[d]^{\pi} \\
                A \ar[r]_-{\io_1} \ar@/^1em/[urr]^{\ps_0} &
                \BigBFP_{g \in G} A \ar[r]_-{\ph} \ar@{-->}[ur]_{\ps} &
                D.
        }
\]
It remains to show that $\pi \circ \ps = \ph$
and that $\ps$ is $G$-equivariant.

Let $t \in G$.
Using equivariance of $\pi$ at the second step
and equivariance of $\ph$ at the fourth step, we get
\[
\pi \circ \ps \circ \io_{A, t}
          = \pi \circ \bt_t \circ \ps_0
          = \dt_t \circ \pi \circ \ps_0
          = \dt_t \circ \ph \circ \io_1
          = \ph \circ \ta_t \circ \io_1
          = \ph \circ \io_{A, t}.
\]
Since this is true for all $t \in G$,
and since $\bigcup_{t \in G} \io_{A, t} (A)$
generates $\BFP_{g \in G} A$,
it follows that $\pi \circ \ps = \ph$.

To see that $\ps$ is equivariant, let $s, t \in G$.
We compute:
\[
\bt_s \circ \ps \circ \io_{A, t}
        = \bt_s \circ \bt_t \circ \ps_0
        = \bt_{st} \circ \ps_0
        = \ps \circ \io_{A, st}
        = \ps \circ \ta_s \circ \io_{A, t}.
\]
For the same reason as in the previous paragraph,
it follows that $\bt_s \circ \ps = \ps \circ \ta_s$,
and so $\ps$ is $G$-equivariant.
\end{proof}


\begin{rmk}
Let $G$ be countable discrete group.
The universal $G$-algebra $P_G$
is (non\eqv{ly}) \pj,
since it is isomorphic to ${\mathcal{F}}_{\infty}$.
We can use this to show that if $\af \colon G \to \Aut (A)$
is a \pj{} action, then $A$ must be \pj.
This is a special case of
\autoref{P_EqPj_implies_Pj}.

Using the universal property of $P_G$ and separability
of $A$, we can find a surjective $G$-morphism
$\rho \colon P_G \to A$.
Since $\af$ is \eqpj, we can find a $G$-morphism
$\lambda \colon A \to P_G$ such that $\rh \circ \lambda = \id_A$.
If now $\ph \colon A \to C / J$ is a \stHm,
there is a \stHm{} $\ps \colon P_G \to C$ which lifts $\ph \circ \rh$.
Then $\ps \circ \lambda$ lifts~$\ph$.

A more involved argument,
which we do not give here,
gives a similar result for finite groups and \sjy.
\end{rmk}


We now turn to what we call the orthogonal Bernoulli shift.

\begin{dfn}\label{R:OrthShift}
Let $A$ be a separable \ca{} and let $G$ be a countable discrete group.
The {\emph{orthogonal Bernoulli shift based on~$A$}}
is the action
$\sm^A \colon G \to \Aut ( C_0 (G, A))$
given by $\sm^A_s (a) (t) = a (s^{- 1} t)$ for $a \in C_0 (G, A)$
and $s, t \in G$.
\end{dfn}

We think of $C_0 (G, A)$ as $\bigoplus_{g \in G} A$.
Then the automorphism $\sm^A_s$ sends the summand indexed by $t \in G$
to the summand indexed by $s t$.

By analogy with
\eqv{} \sjy{}
of actions of compact groups on finite dimensional \ca{s}
(Theorem~2.6 of \cite{Phi2012})
and
\pjy{} of $C_0 ( (0, 1] )$,
it seems reasonable to hope that the orthogonal
Bernoulli shift based on $C_0 ( (0, 1] )$
is projective whenever $G$ is finite.
This seems difficult to prove;
we have been able to do so only for $G = \Z_{2^n}$,
the finite cyclic group of order $2^n$.
(See \autoref{P_OrthShift}.)
We start with some lemmas.


\begin{lem}
\label{P_skewAct}
Let $n \in \N$.
Let $B$ be a \ca,
let $\bt \in \Aut (B)$ satisfy $\bt^{2^n} = \id_B$,
let $I$ be a $\bt$-invariant ideal in $B$,
let $\pi \colon B \to B / I$ be the quotient map,
and let $\af \in \Aut (B / I)$ be the induced automorphism.
If $x \in B / I$ is selfadjoint and satisfies $\af (x) = - x$,
then there is a selfadjoint element $y \in B$
such that $\bt (y) = - y$ and $\pi (y) = x$.
\end{lem}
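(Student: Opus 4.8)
The plan is a direct averaging argument: since $\bt$ has finite order and $2^n$ is invertible in $\C$, one can project a selfadjoint lift of $x$ onto the $(-1)$-eigenspace of $\bt$. First I would produce a selfadjoint lift of $x$. As $\pi$ is surjective, choose $z \in B$ with $\pi (z) = x$; since $x = x^*$, the element $y_0 = \tfrac{1}{2} (z + z^*)$ is selfadjoint and still satisfies $\pi (y_0) = x$.

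Now set
\[
y = \frac{1}{2^n} \sum_{k = 0}^{2^n - 1} (-1)^k \bt^k (y_0).
\]
Because $\bt$ is a $*$-automorphism and $y_0$ is selfadjoint, each $\bt^k (y_0)$ is selfadjoint, and the coefficients $(-1)^k / 2^n$ are real, so $y = y^*$. For the eigenvalue condition, I would apply $\bt$ term by term, substitute $j = k + 1$, and use $\bt^{2^n} (y_0) = y_0$ together with $(-1)^{2^n} = 1$ to reabsorb the $j = 2^n$ term as the missing $j = 0$ term, obtaining
\[
\bt (y) = - \frac{1}{2^n} \sum_{j = 0}^{2^n - 1} (-1)^j \bt^j (y_0) = - y .
\]

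Finally I would check that $\pi (y) = x$. Since $\af \circ \pi = \pi \circ \bt$, we have $\pi (\bt^k (y_0)) = \af^k (x)$, and the hypothesis $\af (x) = - x$ gives $\af^k (x) = (-1)^k x$. Hence
\[
\pi (y) = \frac{1}{2^n} \sum_{k = 0}^{2^n - 1} (-1)^k (-1)^k x = \frac{1}{2^n} \sum_{k = 0}^{2^n - 1} x = x .
\]

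I do not expect a genuine obstacle in this proof; the one thing to be careful about is the sign bookkeeping --- that the alternating sum telescopes correctly under $\bt$ (this is where $\bt^{2^n} = \id_B$ enters) and that the signs $(-1)^k$ built into the definition of $y$ exactly cancel the eigenvalue signs of $\af^k$ acting on $x$, so that $\pi (y)$ comes out to $x$ rather than $0$. Note that only the finiteness of the order of $\bt$ is used here, not the specific value $2^n$ (which is presumably dictated by the later application to $\Z_{2^n}$).
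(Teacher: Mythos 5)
Your proof is correct and is essentially identical to the paper's: the paper likewise lifts $x$ to a selfadjoint element $b$ and forms the alternating average $y = \tfrac{1}{2^n}\bigl[ b - \bt (b) + \bt^{2} (b) - \cdots - \bt^{2^{n} - 1} (b) \bigr]$, verifying the same three properties. One small caveat about your closing aside: the reabsorption of the last term uses that the exponent $2^n$ is even (so that $(-1)^{2^n} = 1$), so the argument as written needs more than mere finiteness of the order --- an odd-order sum would not telescope --- although the lemma itself does extend to any finite-order automorphism by averaging over an even multiple of the order.
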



\begin{proof}
First lift $x$ to a selfadjoint element $b \in B$.
Put
\[
y = \frac{1}{2^n} \big[ b - \bt (b)
       + \bt^{2} (b) - \bt^{3} (b) + \cdots
       - \bt^{2^{n} - 1} (b) \big].
\]
Then $y$ is selfadjoint,
$\bt (y) = - y$, and $y$ is a lift of $x$.
\end{proof}


\begin{lem}
\label{P_ShiftTwo}
Let $n \in \N$.
Let $B$ be a \ca,
let $\bt \in \Aut (B)$ satisfy $\bt^{2^n} = \id_B$,
let $I$ be a $\bt$-invariant ideal in $B$,
let $\pi \colon B \to B / I$ be the quotient map,
and let $\af \in \Aut (B / I)$ be the induced automorphism.
Let $h_1, h_2 \in B/I$ be positive orthogonal elements
such that $\alpha (h_1) = h_2$ and $\alpha (h_2) = h_1$.
Then there exist positive orthogonal elements $k_1, k_2 \in B$
such that
\[
\pi (k_1) = h_1,
\,\,\,\,\,\,
\pi (k_2) = h_2,
\,\,\,\,\,\,
\bt (k_1) = k_2,
\andeqn
\bt (k_2) = k_1.
\]
\end{lem}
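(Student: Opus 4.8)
The plan is to reduce to \autoref{P_skewAct} by replacing the pair $(h_1,h_2)$ with a single selfadjoint element that is negated by $\af$, lift that, and then recover an orthogonal pair. First I would set $x = h_1 - h_2 \in B/I$. Since $h_1,h_2$ are positive and orthogonal, $x$ is selfadjoint, and $\af(x) = \af(h_1) - \af(h_2) = h_2 - h_1 = -x$. By \autoref{P_skewAct}, there is a selfadjoint $y \in B$ with $\bt(y) = -y$ and $\pi(y) = x$. Now write $y = y_+ - y_-$ for the positive and negative parts. Because $\bt$ is a $*$-automorphism, $\bt(y_+) = (\bt y)_+ = (-y)_+ = y_-$ and likewise $\bt(y_-) = y_+$; also $y_+$ and $y_-$ are positive and orthogonal.

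Next I would check that $y_+$ and $y_-$ lift $h_1$ and $h_2$. Applying $\pi$ and using that $\pi$ is a $*$-homomorphism, $\pi(y_+) = (\pi y)_+ = x_+$. Since $h_1, h_2 \ge 0$ are orthogonal, $x = h_1 - h_2$ has $x_+ = h_1$ and $x_- = h_2$. Hence $\pi(y_+) = h_1$ and $\pi(y_-) = h_2$. So $k_1 = y_+$ and $k_2 = y_-$ are positive orthogonal elements of $B$ with $\pi(k_1) = h_1$, $\pi(k_2) = h_2$, $\bt(k_1) = k_2$, $\bt(k_2) = k_1$, which is exactly the claim.

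The step I expect to require the most care is the compatibility of the functional calculus (taking positive/negative parts) with $\bt$ and with $\pi$. This is genuinely routine — any $*$-homomorphism commutes with continuous functional calculus on selfadjoint elements, so $\bt(y_\pm) = (\bt y)_\pm$ and $\pi(y_\pm) = (\pi y)_\pm$ — but it is the only place where more than formal manipulation happens, and it is worth stating explicitly that $(-y)_+ = y_-$. No separability or group-action hypotheses beyond $\bt^{2^n} = \id_B$ enter; the whole argument is a two-line application of \autoref{P_skewAct} once the decomposition into positive and negative parts is in hand.
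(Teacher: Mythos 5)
Your argument is correct and is essentially the paper's own proof: both set $x = h_1 - h_2$, lift via \autoref{P_skewAct} to a selfadjoint $y$ with $\bt(y) = -y$, and take $k_1, k_2$ to be the positive and negative parts of $y$ (the paper phrases it as $k_2 = \bt(k_1)$ and then checks this equals $y_-$, which is the same verification you carry out directly).
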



\begin{proof}
Put $x = h_1 - h_2$.
Since $x$ is selfadjoint and $\alpha (x) = - x$, we can,
by \autoref{P_skewAct}, lift it to a selfadjoint
element $y \in B$ with $\bt (y) = - y$.
Let $k_1$ be the positive part of $y$, that is,
$k_1 = \frac{1}{2} (y + |y|)$.
Then $\pi (k_1) = \frac{1}{2} (x + |x|) = h_1$.
Put $k_2 = \bt (k_1)$.
Routine calculations show that
$k_2$ is the negative part of~$y$,
and thus orthogonal to $k_1$.
Essentially the same calculations show
that $\bt (k_2) = k_1$.
It is clear that $\pi (k_2) = h_2$.
\end{proof}


\begin{prp}\label{P_2X08LMnyOrth}
Let $n \in \N$.
Let $B$ be a \ca,
let $\bt \in \Aut (B)$ satisfy $\bt^{2^n} = \id_B$,
let $I$ be a $\bt$-invariant ideal in $B$,
let $\pi \colon B \to B / I$ be the quotient map,
and let $\af \in \Aut (B / I)$ be the induced automorphism.
Let $h_1, h_2, \ldots, h_{2^n} \in B/I$ be orthogonal positive elements
such that $\af (h_m) = h_{m + 1}$ for $m = 1, 2, \ldots, 2^n - 1$,
and such that $\af (h_{2^n}) = h_1$.
Then they can be lifted to orthogonal positive elements
$k_m \in B$ for $m = 1, 2, \ldots, 2^n$
such that $\bt (k_m) = k_{m + 1}$
for $m = 1, 2, \ldots, 2^n - 1$ and such that $\bt (k_{2^n}) = k_1$.
Moreover,
if $\| h_m \| \leq 1$ for $m = 1, 2, \ldots, 2^n$,
then we can require that $\| k_m \| \leq 1$ for $m = 1, 2, \ldots, 2^n$.
\end{prp}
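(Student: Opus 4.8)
The plan is to prove the proposition by induction on $n$. The base case $n = 1$ is precisely \autoref{P_ShiftTwo}, and the supplementary norm estimate (that $\| k_1 \|, \| k_2 \| \leq 1$ when $\| h_1 \|, \| h_2 \| \leq 1$) follows from the proof given there: since $h_1 \perp h_2$ we have $\| h_1 - h_2 \| \leq 1$, so in the application of \autoref{P_skewAct} the selfadjoint lift $b$ may be taken contractive, and then the element $y$ and its positive and negative parts $k_1$ and $k_2$ are contractive as well. For the inductive step (with $n \geq 2$), the key idea is that $\Z / 2^n$ has an index-two subgroup: I would split the $2^n$ given elements into the two sub-families indexed by the odd and by the even integers, each of which is a single orbit of length $2^{n-1}$ under $\af^2$. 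Put $g_1 = h_1 + h_3 + \cdots + h_{2^n - 1}$ and $g_2 = h_2 + h_4 + \cdots + h_{2^n}$. Since the $h_m$ are positive and orthogonal, $g_1$ and $g_2$ are positive and orthogonal, and $\af (g_1) = g_2$, $\af (g_2) = g_1$. By \autoref{P_ShiftTwo} there are positive orthogonal elements $f_1, f_2 \in B$ with $\pi (f_1) = g_1$, $\pi (f_2) = g_2$, $\bt (f_1) = f_2$, and $\bt (f_2) = f_1$.

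Next I would pass to the hereditary subalgebra $C = {\overline{f_1 B f_1}}$. Because $\bt (f_1) = f_2$ and $\bt (f_2) = f_1$, we get $\bt^2 (f_1) = f_1$, so $\bt^2$ restricts to an automorphism of $C$ with $(\bt^2|_C)^{2^{n-1}} = \id_C$, and it leaves the ideal $I' = C \cap I$ invariant. As is standard, $\pi$ restricts to a quotient map of $C$ onto ${\overline{g_1 (B/I) g_1}}$ with kernel $I'$, and under this identification the automorphism of $C / I'$ induced by $\bt^2|_C$ is the restriction of $\af^2$. Each $h_m$ with $m$ odd satisfies $h_m \leq g_1$, hence $h_m \in {\overline{g_1 (B/I) g_1}}$, and $\af^2$ permutes $h_1, h_3, \ldots, h_{2^n - 1}$ cyclically, with $\af^2 (h_{2^n - 1}) = \af (h_{2^n}) = h_1$. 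Applying the inductive hypothesis inside $C$ to these $2^{n-1}$ elements yields orthogonal positive lifts $k_1, k_3, \ldots, k_{2^n - 1} \in C$ with $\pi (k_m) = h_m$, with $\bt^2 (k_m) = k_{m+2}$ for odd $m < 2^n - 1$ and $\bt^2 (k_{2^n - 1}) = k_1$, and with $\| k_m \| \leq 1$ whenever $\| h_m \| \leq 1$.

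It then remains to produce the even-indexed lifts, and here I would simply set $k_{2m} = \bt (k_{2m - 1})$ for $m = 1, \ldots, 2^{n-1}$. These are positive, have $\| k_{2m} \| = \| k_{2m - 1} \|$, and satisfy $\pi (k_{2m}) = \af (h_{2m - 1}) = h_{2m}$. They lie in $\bt (C) = {\overline{f_2 B f_2}}$, which is orthogonal to $C = {\overline{f_1 B f_1}}$ since $f_1 f_2 = 0$; hence every even-indexed $k$ is orthogonal to every odd-indexed one. Orthogonality within the odd family comes from the inductive hypothesis, and within the even family it follows by applying the $*$-homomorphism $\bt$ to the corresponding odd relations. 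Finally, the relation $\bt (k_m) = k_{m+1}$ holds by construction when $m$ is odd, and when $m$ is even it says $\bt^2 (k_{m-1}) = k_{m+1}$, which is one of the relations from the inductive hypothesis, the case $m = 2^n$ giving $\bt (k_{2^n}) = \bt^2 (k_{2^n - 1}) = k_1$. This closes the induction and, together with the base case, proves the proposition.

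I expect the main difficulty to be bookkeeping rather than any isolated hard estimate: one must check carefully that $\bt^2$ really does restrict to the corner $C$ (which is exactly where $\bt (f_1) = f_2$ and $\bt (f_2) = f_1$ are used), identify the corner quotient $C / I'$ with ${\overline{g_1 (B/I) g_1}}$ and its induced automorphism with $\af^2$, and keep the cyclic indices straight so that the two orbits of length $2^{n-1}$ reassemble into a single orbit of length $2^n$ under $\bt$. The only extra input beyond \autoref{P_ShiftTwo} and \autoref{P_skewAct} is the small observation about the norm bound in the base case noted above.
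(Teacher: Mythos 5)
Your proposal is correct and follows essentially the same route as the paper's proof: induction on $n$, splitting the orbit into the odd- and even-indexed elements, lifting $h_1 + h_3 + \cdots$ and $h_2 + h_4 + \cdots$ via \autoref{P_ShiftTwo}, applying the inductive hypothesis with $\bt^2$ inside the corner ${\overline{f_1 B f_1}}$, and defining the even-indexed lifts as $\bt$ of the odd-indexed ones, with orthogonality across the two families coming from $f_1 f_2 = 0$. The only minor divergence is the norm clause: the paper proves the statement without norm control and then applies the functional calculus $t \mapsto \min(t, 1)$ to the lifts at the end, while you carry the bound through the induction using a norm-preserving selfadjoint lift in the base case; both arguments work.
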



\begin{proof}
The proof (except for the last statement) is by induction on~$n$.
The case $n = 1$ is \autoref{P_ShiftTwo}.
Let $n > 1$, suppose that we have shown
the statement to hold for all natural numbers $l < n$
and all choices of $B$, $\bt$, $I$, and $h_1, h_2, \ldots, h_{2^l}$,
and let $B$, $\bt$, $I$, and $h_1, h_2, \ldots, h_{2^n}$
be as in the statement.

Set
\[
a_1 = h_1 + h_3 + \cdots + h_{2^{n} - 1}
\andeqn
a_2 = h_2 + h_4 + \cdots + h_{2^n}.
\]
Then $\af (a_1) = a_2$, $\af (a_2) = a_1$, and $a_1 a_2 = 0$.
So, by \autoref{P_ShiftTwo},
we can lift $a_1$ and $a_2$ to orthogonal positive elements
$b_1, b_2$ such that $\bt (b_1) = b_2$ and $\bt (b_2) = b_1$.

The hereditary subalgebra ${\overline{b_1 B b_1}} \subset B$
is $\bt^2$-invariant
and it is easy to check that
$\pi \big( {\overline{b_1 B b_1}} \big) = {\overline{a_1 (B/J) a_1}}$.
Apply the induction hypothesis
with ${\overline{b_1 B b_1}}$ in place of~$B$,
with $\bt^2$ in place of~$\bt$,
with ${\overline{b_1 B b_1}} \cap I$ in place of~$I$,
and with $h_1, h_3, \ldots, h_{2^{n} - 1}$
in place of $h_1, h_2, \ldots, h_{2^n}$.
We obtain orthogonal positive elements
$k_1, k_3, \ldots, k_{2^{n} - 1} \in {\overline{b_1 B b_1}}$
such that $\pi (k_m) = h_m$ and $\bt^2 (k_m) = k_{m + 2}$
for $m = 1, 3 ,5, \ldots, 2^{n} - 1$,
and such that $\bt^2 (k_{2^{n} - 1}) = k_1$.
Set $k_m = \bt (k_{m - 1}) \in {\overline{b_2 B b_2}}$
for $m = 2, 4, 6, \ldots, 2^n$.
Then $\pi (k_m) = h_m$ also for $m = 2, 4, 6, \ldots, 2^n$.
It is clear that $\bt (k_m) = k_{m + 1}$
for $m = 1, 2, \ldots, 2^n - 1$ and that $\bt (k_{2^n}) = k_1$.
It only remains to check that the elements $k_m$ are orthogonal
for $m = 1, 2, \ldots, 2^n$.
The only case needing work is $k_l$ and $k_m$ when
one of $l$ and $m$ is even and the other is odd.
But then one of $k_l$ and $k_m$ is in
${\overline{b_1 B b_1}}$ and the other is in ${\overline{b_2 B b_2}}$,
so the desired conclusion follows from $b_1 b_2 = 0$.

It remains to prove the last statement.
Let $x_1, x_2, \ldots, x_{2^n} \in B$
be the elements produced in the first part.
Let $f \colon [0, \infty) \to [0, 1]$
be the function $f (t) = \min (t, 1)$ for $t \geq 0$.
Then set $k_m = f (x_m)$ for $m = 1, 2, \ldots, 2^n$.
\end{proof}


\begin{prp}
\label{P_OrthShift}
Let $A$ be a separable \sjpj{} \ca,
let $n \in \N$,
and let
$\sm \colon \Z_n \to \Aut \big( \bigoplus_{m = 1}^n A \big)$
be the orthogonal Bernoulli shift of \autoref{R:OrthShift}.
If $n$ is a power of $2$, then $\sm$ is \sjpj.
\end{prp}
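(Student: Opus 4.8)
The plan is to reduce the equivariant lifting problem for the orthogonal Bernoulli shift $\sm$ of $\Z_{2^n}$ on $\bigoplus_{m=1}^{2^n} A$ to the ordinary \sjpjy{} of $A$, using \autoref{P_2X08LMnyOrth} to handle the equivariance. I give the argument for projectivity; the semiprojective case should be parallel, with the single ideal $J$ replaced by an increasing sequence $J_1 \subset J_2 \subset \cdots$, and an extra appeal to \autoref{P_2X08LMnyOrth} (whose statement already allows the ideal to vary) to ensure the lifts can be taken inside $C/J_k$ for a single large $k$. So let $(G,C,\gm)$ with $G = \Z_{2^n}$, let $J \subset C$ be a $G$-invariant ideal with quotient map $\pi \colon C \to C/J$, and let $\ph \colon \bigoplus_{m=1}^{2^n} A \to C/J$ be a $G$-morphism; I must produce a $G$-morphism $\ps$ lifting $\ph$.

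First I would decompose the source. Write $g \in G$ for a fixed generator, so $\sm_g$ cyclically permutes the $2^n$ summands. Let $e_1, \ldots, e_{2^n}$ be an approximate identity situation: more precisely, the summand decomposition corresponds to central multiplier projections (or an approximate unit) and $\ph$ restricted to the first summand is a \stHm{} $\ph_1 \colon A \to C/J$ whose image lies in a hereditary subalgebra; the images of the various summands are mutually orthogonal, and $\sm_g$ carries summand $m$ to summand $m+1$. The key point is that I only need to lift $\ph_1$ together with the orthogonality and the "shift" structure. Using \pjy{} of $A$ applied to $\pi$, I can first lift $\ph_1$ to an honest \stHm{} $\ps_1^{(0)} \colon A \to C$ with $\pi \circ \ps_1^{(0)} = \ph_1$; but I also need the $2^n$ lifts $\bt^m \circ \ps_1^{(0)}$ (where $\bt = \gm_g$) to have mutually orthogonal images so that they assemble into a \stHm{} out of the direct sum, and that orthogonality is exactly what a naive lift will fail to give.

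This is where \autoref{P_2X08LMnyOrth} enters, and I expect it to be the crux. The construction should go: pick a strictly positive element $a \in A$ (or run the argument over a generating set / approximate unit of $A$); set $h_m = \ph(\text{image of }a\text{ in summand }m) = \af^{m-1}(\ph_1(a))$, a cyclic family of orthogonal positive elements in $C/J$ permuted by $\af = \overline{\gm_g}$; by \autoref{P_2X08LMnyOrth} lift them to orthogonal positive $k_m \in C$ with $\bt(k_m) = k_{m+1}$ (indices mod $2^n$). Then replace $C$ by the hereditary subalgebra $D = \overline{k_1 C k_1}$, which is $\bt^{2^n} = \id$-equivariantly a copy of... no: $D$ is only $\bt^{2^n}$-invariant, but $\bt^{2^n} = \id_C$, so $D$ is honestly a \ca{} on which we can work, and $\pi(D)$ is the hereditary subalgebra of $C/J$ generated by $h_1 = \ph_1(a)$, which contains $\ph_1(A)$. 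Now re-apply \pjy{} of $A$ to the surjection $D \to \pi(D)$ to get a \stHm{} $\ps_1 \colon A \to D$ lifting $\ph_1$; crucially, for each $m$ the image $\bt^{m-1}(\ps_1(A))$ lands in $\overline{\bt^{m-1}(k_1) C \bt^{m-1}(k_1)} = \overline{k_m C k_m}$, and these hereditary subalgebras are mutually orthogonal because the $k_m$ are. Therefore the maps $a \mapsto \bt^{m-1}(\ps_1(a))$ on the $m$-th summand have orthogonal ranges and patch to a single \stHm{} $\ps \colon \bigoplus_{m=1}^{2^n} A \to C$. That $\pi \circ \ps = \ph$ holds summandwise by construction, and that $\ps$ is $G$-equivariant is immediate from $\ps \circ \sm_g = \bt \circ \ps$, which again holds summandwise since $\bt$ shifts the $k_m$ cyclically and $\bt^{2^n} = \id$.

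The main obstacle is the bookkeeping to make "$\pi(D)$ contains $\ph_1(A)$" and "the patched map is well-defined" precise when $A$ is not singly generated: one must either run \autoref{P_2X08LMnyOrth} with $a$ a strictly positive element (so that $\overline{\ph_1(a)(C/J)\ph_1(a)} \supseteq \ph_1(A)$ automatically) or, in the non-$\sigma$-unital generality, replace the single $k_1$ by a suitable net/approximate identity and take a limit — and in the semiprojective case, track that all the finitely many lifting steps land in $C/J_k$ for one fixed $k$, which is fine since each step uses only finitely much data and \autoref{P_2X08LMnyOrth} is stated with a varying ideal. Everything else is routine once the orthogonal cyclic lift of \autoref{P_2X08LMnyOrth} is in hand, which is precisely why $n$ must be a power of $2$: that proposition's inductive halving argument needs $|G| = 2^n$.
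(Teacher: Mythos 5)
Your argument is correct and essentially the same as the paper's: both lift the cyclic orthogonal family $h_m = \io_m(h)$ for a strictly positive $h \in A$ via \autoref{P_2X08LMnyOrth}, pass to the hereditary subalgebra $\overline{k_1 C k_1}$, apply (semi)projectivity of $A$ there, and assemble the lift by summing the $\gm$-translates over the summands. The only cosmetic differences are that the paper first invokes \autoref{P_SjInjSurj} to assume the given morphism is surjective (so the hereditary subalgebra maps exactly onto $\io_1(A)$), and that the extra ingredient you gesture at for the semiprojective case is precisely \autoref{L_2X05IdAlgInt}.
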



\begin{proof}
We give the proof when $A$ is projective.
The semiprojective case is analogous,
but requires \autoref{L_2X05IdAlgInt}.

By \autoref{P_SjInjSurj}, it is enough
to show that for every $G$-algebra $\GBb$
and every surjective $G$-morphism
$\pi \colon B \to \bigoplus_{m = 1}^n A$,
there exists a $G$-morphism $\ps \colon \bigoplus_{m = 1}^n A \to B$
such that $\pi \circ \ps = \id_A$.

Let $\af = \sm_1$, the automorphism corresponding to the generator
$1 \in \Z_n$, and similarly let $\gm = \bt_1 \in \Aut(B)$.
For $m = 1, 2, \ldots, n$, let
$\io_m \colon A \to \bigoplus_{m = 1}^n A$
be the map that sends $A$ to the summand
in $\bigoplus_{m = 1}^n A$ indexed by~$m$,
and let $\rh_m \colon \bigoplus_{m = 1}^n A \to A$
be the surjection onto the summand indexed by~$m$.
Then $a = \sum_{m = 1}^n (\io_m \circ \rh_m) (a)$
for every $a \in \bigoplus_{m = 1}^n A$.

Let $h$ be a strictly positive element in $A$.
For $m = 1, 2, \ldots, n$, set $h_m = \io_m (h)$.
Then $h_1, h_2, \ldots, h_n$ are orthogonal positive elements
in $\bigoplus_{m = 1}^n A$
such that $\af (h_m) = h_{m + 1}$ for $m = 1, 2, \ldots, n - 1$,
and such that $\af (h_n) = h_1$.
By \autoref{P_2X08LMnyOrth},
they can be lifted to orthogonal positive elements
$k_m \in B$ for $m = 1, 2, \ldots, 2^n$
such that $\gm (k_m) = k_{m + 1}$
for $m = 1, 2, \ldots, n - 1$ and such that $\gm (k_n) = k_1$.

Let $D = {\overline{k_1 B k_1}}$.
Then $\pi (D) = \io_1 (A)$.
Since $A$ is projective,
there exists a \stHm{} $\ps_1 \colon A \to D$
such that $\pi \circ \ps_1 = \io_1$.
Define $\ps \colon \bigoplus_{m = 1}^n A \to B$ by
\[
\ps(a) = \sum_{m = 1}^n
       \big( \gm^{m - 1} \circ \ps_1 \circ \rho_m \big) (a).
\]
It is easily checked that $\ps$ has the desired properties.
\end{proof}


\begin{qst}
Consider the orthogonal Bernoulli shift
$G \to \Aut \big( \bigoplus_G C_0 ( (0, 1] ) \big)$
of \autoref{R:OrthShift}.
For which groups~$G$ is this action \pj?

In particular, is it \pj{} for $G = \Z_n$
for all $n \in \N$?
Is it \pj{} for $G = \Z$?
\end{qst}


\section{Equivariant semiprojectivity of restrictions to subgroups}
\label{Sec_SjSubGp}

\indent
Let $\af \colon G \to \Aut (A)$ be a \sj{} action.
In this section we investigate \sjy{}
of the restriction
of $\af$ to a subgroup $H \leqslant G$.
In \autoref{P_SjSubGp},
we obtain a positive result when $H$ is cocompact.
It follows (\autoref{P_EqSj_implies_Sj})
that a second countable compact group
can only act \sj{ly} on a \ca{} that is \sj{} in the usual sense.
Some condition on~$H$ is necessary.
For instance,
in \autoref{R:SjSubGp_notCpct}
we construct a \sj{} action of $\Z$ on a non\sj{} \ca.

To obtain these results, we use the induction functor,
which assigns in a natural way to each $H$-algebra an induced
$G$-algebra.
We will show that this functor
preserves exact sequences (\autoref{P_Ind_Exact})
and behaves well with respect to direct limits
(\autoref{P_Ind_Cts}).

We begin by recalling the definition of the induction functor,
from the beginning of Section~2 of \cite{KIRWAS1999B}
or the beginning of Section~6 of \cite{Ech2010}.
In \autoref{D_2X08Ind} below,
one easily checks that the action defined on the algebra $\Ind_H^G (A)$
is \ct,
so that $\big( G, \, \Ind_H^G (A), \, \Ind_H^G (\af) \big)$
is in fact a $G$-algebra,
and that $\Ind_H^G$ really is a functor.


\begin{dfn}\label{D_3205_Cat}
For a \lcg{} $G$, we let $\CatG$ denote
the category whose objects are $G$-algebras
and whose morphisms are $G$-equivariant \stHm{s}
(also called $G$-morphisms).
\end{dfn}


\begin{dfn}\label{D_2X08Ind}
Let $H \leqslant G$ be a closed subgroup,
and let $(H, A, \af)$ be an object in $\CatH$.
We define an object $\big( G, \, \Ind_H^G (A), \, \Ind_H^G (\af) \big)$
in $\CatG$ as follows.
We take
\[
\Ind_H^G (A)
 = \left\{ f \in C_{\mathrm{b}} (G, A) \colon
\begin{matrix}
{\mbox{\; $\af_h (f (s h)) = f (s)$ for all $s \in G$ and $h \in H$}}
 \\
{\mbox{and $s H \mapsto \| f (s) \|$ is in $C_0 (G / H)$}}
\end{matrix}
\right \}.
\]
The induced action
$\Ind_H^G (\af) \colon G \to \Aut \big( \Ind_H^G (A) \big)$
is given by
\[
\big( \Ind_H^G (\af) \big)_s (f) (t) = f ( s^{- 1} t)
\]
for $f \in \Ind_H^G (A)$ and $s, t \in G$.
If $A$ and $B$ are $H$-algebras and
$\ph \colon A \to B$ is an $H$-morphism,
then the induced $G$-morphism
$\Ind_H^G (\ph) \colon \Ind_H^G (A) \to \Ind_H^G (B)$ is given by
\[
\Ind_H^G (\ph) (f) (s) = \ph (f (s))
\]
for $f \in \Ind_H^G (A)$ and $s \in G$.
\end{dfn}


The induction functor is often defined on a different category than
that considered here.
The objects are still $G$-algebras, but the morphisms
are equivariant Hilbert bimodules.
We refer to Section~6 of \cite{Ech2010}
and to \cite{EchKalQuiRae2000}
for more details.

We next recall the definition of a $C_0 (X)$-algebra.
See Section~4.5 of \cite{Phi1987},
Definition~1.5 of \cite{Kas1988},
or Definition~2.6 of \cite{Blanch1996}.
We recall that if $A$ is a \ca,
then $M (A)$ is its multiplier algebra
and $Z (A)$ is its center.


\begin{dfn}\label{D_2X08c0XAlg}
Let $X$ be a locally compact Hausdorff space.
A {\emph{$C_0 (X)$-algebra}} is a \ca~$A$ together with a \stHm{}
$\eta \colon C_0 (X) \to Z (M (A))$, called the structure map,
such that
\[
\big\{ \et (f) a \colon
     {\mbox{$f \in C_0 (X)$ and $a \in A$}} \big\}
\]
is dense in~$A$.

We will usually write $f a$ or $f \cdot a$ instead of $\eta (f) a$
for the product of a function $f \in C_0 (X)$ and an element $a \in A$.
For an open set $U \subset X$, we set
\[
A (U) = \big\{ f a \colon
     {\mbox{$f \in C_0 (U)$ and $a \in A$}} \big\},
\]
which is an ideal of $A$.
(See \autoref{P_2X08PropCXA}(\ref{P_2X08PropCXA_Ideal}).)
For a closed subset $Y \subset X$, we denote by $A (Y)$
the quotient $A / A (X \setminus Y)$.

For $x \in X$ we write $A (x)$ for $A (\{x \})$,
and this \ca{} is called the {\emph{fiber of $A$ at $x$}}.
Given $a \in A$, we denote its image in the fiber $A (x)$ by $a (x)$,
and we define $\check{a} \colon X \to [0, \infty)$
by $\check{a} (x) = \| a (x) \|$ for $x \in X$.
We call $A$ a {\emph{continuous}} $C_{0} (X)$-algebra
if $\check{a}$ is continuous for each $a \in A$.

If $A$ and $B$ are $C_0 (X)$-algebras
and $\ph \colon A \to B$ is a \stHm,
then $\ph$ is said to be a {\emph{$C_0 (X)$-morphism}}
if $\ph (f \cdot a) = f \cdot \ph (a)$
for all $f \in C_0 (X)$ and $a \in A$.
\end{dfn}


We recall the following facts about $C_0 (X)$-algebras.


\begin{prp}\label{P_2X08PropCXA}
Let $X$ be a locally compact Hausdorff space
and let $A$ be a $C_0 (X)$-algebra
with structure map $\eta \colon C_0 (X) \to Z (M (A))$.
Then:
\begin{enumerate}
\item\label{P_2X08PropCXA_CFact}
$A = \big\{ \et (f) a \colon
     {\mbox{$f \in C_0 (X)$ and $a \in A$}} \big\}$.
\item\label{P_2X08PropCXA_Ideal}
If $U \subset A$ is open then $A (U)$ is an ideal in~$A$.
\item\label{P_2X08PropCXA_USC}
For $a \in A$,
the function $\check{a}$ is upper semicontinuous
and vanishes at infinity.
\item\label{P_2X08PropCXA_Norm}
For $a \in A$,
we have $\| a \| = \sup_{x \in X} \check{a} (x)$.
\end{enumerate}
\end{prp}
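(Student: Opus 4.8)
The plan is to deduce all four statements from the Cohen--Hewitt factorization theorem together with one structural observation about the fibers. For~(\ref{P_2X08PropCXA_CFact}): via $\eta$, regard $A$ as a Banach module over the $C^*$-algebra $C_0(X)$, which is legitimate since $\eta$ is a homomorphism into $Z(M(A))$. The density requirement in \autoref{D_2X08c0XAlg} says exactly that this module is nondegenerate, and $C_0(X)$ has a contractive approximate identity, so the Cohen--Hewitt factorization theorem gives $A = \eta(C_0(X)) A = \{ \eta(f) a : f \in C_0(X),\ a \in A \}$. For~(\ref{P_2X08PropCXA_Ideal}): identify $C_0(U)$ with the closed ideal $\{ f \in C_0(X) : f|_{X \setminus U} = 0 \}$ of $C_0(X)$ and set $I = \overline{\eta(C_0(U)) A}$. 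Since $C_0(U)$ has an approximate identity, $\eta(C_0(U)) \cdot I$ is dense in $I$, so Cohen--Hewitt applied to the nondegenerate $C_0(U)$-module $I$ gives $\eta(C_0(U)) I = I$; together with $\eta(C_0(U)) I \subset \eta(C_0(U)) A \subset I$ this shows that $A(U) = \eta(C_0(U)) A = I$ is closed. It is a two-sided ideal because $\eta(f)$ is central: for $a \in A$ and $\eta(f) c \in A(U)$ one has $a \cdot \eta(f) c = \eta(f)(a c) \in A(U)$ and $\eta(f) c \cdot a = \eta(f)(c a) \in A(U)$.

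For the remaining two parts I use the standard fact that, for $x \in X$, the quotient map $q_x \colon A \to A(x)$ is surjective (hence nondegenerate) and so extends to $M(A)$, and that this extension composed with $\eta$ exhibits $A(x)$ as a $C_0(X)$-algebra on which $C_0(X)$ acts through evaluation at $x$; that is, $(\eta(g) b)(x) = g(x) \cdot b(x)$ for $g \in C_0(X)$ and $b \in A$. Granting this, part~(\ref{P_2X08PropCXA_USC}) splits into two pieces. For upper semicontinuity at a point $x_0$, fix $\ep > 0$: since $\check a(x_0) = \dist \big( a, A(X \setminus \{x_0\}) \big)$ and, by~(\ref{P_2X08PropCXA_Ideal}), $A(X \setminus \{x_0\}) = \{ \eta(g) c : g \in C_0(X),\ g(x_0) = 0,\ c \in A \}$, choose such $g$ and $c$ with $\| a - \eta(g) c \| < \check a(x_0) + \ep / 2$, then pick $\dt > 0$ with $\dt \| c \| < \ep / 2$; for $y$ in the open neighbourhood $V = \{ y \in X : |g(y)| < \dt \}$ of $x_0$ we get
\[
\check a(y) \le \| (a - \eta(g) c)(y) \| + |g(y)| \, \| c(y) \| < \check a(x_0) + \ep .
\]
For vanishing at infinity, write $a = \eta(f) a'$ with $f \in C_0(X)$ by~(\ref{P_2X08PropCXA_CFact}); then $\check a(x) = |f(x)| \, \| a'(x) \| \le |f(x)| \, \| a' \|$, so (if $a' \neq 0$) the set $\{ x \in X : \check a(x) \geq \ep \}$ lies inside the compact set $\{ x \in X : |f(x)| \geq \ep / \| a' \| \}$ and, being closed by the upper semicontinuity just established, is compact.

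Part~(\ref{P_2X08PropCXA_Norm}) reduces to the claim that $\bigcap_{x \in X} A(X \setminus \{x\}) = 0$, and proving this claim is the step I expect to be the main obstacle. Granting it: the inequality $\sup_x \check a(x) \le \| a \|$ is immediate from $\| q_x(a) \| \le \| a \|$; for the reverse inequality, replacing $a$ by $a^* a$ reduces us to showing that a positive element $b$ with $s := \sup_x \| b(x) \|$ satisfies $\| b \| \le s$, and for $\ld > s$ the element $(b - \ld)_+$ (continuous functional calculus, legitimate since $t \mapsto (t - \ld)_+$ vanishes at $0$) has every fiber $(b - \ld)_+(x) = (b(x) - \ld)_+ = 0$, hence lies in $\bigcap_x A(X \setminus \{x\}) = 0$, so the spectrum of $b$ is contained in $[0, s]$ and $\| b \| \le s$. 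To prove the claim, let $b \neq 0$ in $A$ and choose an irreducible representation $\pi$ of $A$ with $\pi(b) \neq 0$; since $\pi$ is nondegenerate it extends to $M(A)$, and the extension carries $Z(M(A))$ into operators commuting with the irreducibly acting $\pi(A)$, hence into scalars, so $\pi \circ \eta \colon C_0(X) \to \C$ is a character. It is nonzero, since otherwise $\pi$ would annihilate the dense subset $\eta(C_0(X)) A$, so it is evaluation at some $x_0 \in X$. Then $\pi(\eta(f)) = f(x_0) = 0$ for every $f \in C_0(X \setminus \{x_0\})$, so $\pi$ annihilates $\eta(C_0(X \setminus \{x_0\})) A$ and, by continuity, $A(X \setminus \{x_0\})$; thus $A(X \setminus \{x_0\}) \subset \ker \pi$, and since $\pi(b) \neq 0$ this gives $b \notin \bigcap_x A(X \setminus \{x\})$. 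Outside of this claim, everything is routine bookkeeping with Cohen--Hewitt factorization, the fiber-structure fact, and continuous functional calculus.
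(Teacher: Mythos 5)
Your proof is correct, but it is organized quite differently from the paper's: the paper disposes of all four parts by citation (parts (1), (2) and (4) to Proposition~1.8, Corollary~1.9 and Proposition~2.8 of Blanchard's paper \cite{Blanch1996}, and part~(3) to Proposition~1.2 of Rieffel \cite{Rie1989}), whereas you give a self-contained argument. Your part~(1) coincides with what the paper says is behind the cited result (Cohen--Hewitt factorization applied to the nondegenerate Banach $C_0(X)$-module $A$), and your part~(2) sharpens the citation by running Cohen--Hewitt a second time over $C_0(U)$ to see that $A(U)$ is already closed, centrality of $\eta(C_0(X))$ then giving the ideal property. The two substantive ingredients you supply that the paper leaves to the references are: the fiber formula $(\eta(g)b)(x) = g(x)\,b(x)$ (which, as you note, is standard; it also follows in one line from part~(1), since writing $b = \eta(f)c$ gives $\eta(g)b - g(x)b = \eta\big((g - g(x))f\big)c \in A(X \setminus \{x\})$), which drives your $\varepsilon/2$-argument for upper semicontinuity and the compact-support estimate for vanishing at infinity; and, for part~(4), the claim $\bigcap_{x \in X} A(X \setminus \{x\}) = 0$, which you prove by extending an irreducible representation to $M(A)$, observing that it sends $Z(M(A))$ to scalars, and identifying the resulting character of $C_0(X)$ with an evaluation. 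Combined with functional calculus on $(b - \lambda)_+$ this yields the norm formula. This representation-theoretic detour is the genuine extra content compared with the paper's one-line citations, and it buys a proof that does not depend on the cited literature; the cost is length, but every step you take is sound.
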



\begin{proof}
Part~(\ref{P_2X08PropCXA_CFact})
is Proposition~1.8 of \cite{Blanch1996}.
(This is essentially the Cohen Factorization Theorem.)

For~(\ref{P_2X08PropCXA_Ideal}),
it follows from Corollary~1.9 of \cite{Blanch1996}
that $A (U)$ is a closed $C_0 (X)$-submodule of~$A$.
It now easily follows that $A (U)$ is an ideal.

Part~(\ref{P_2X08PropCXA_USC})
is Proposition~1.2 of \cite{Rie1989}.

Part~(\ref{P_2X08PropCXA_Norm})
is Proposition~2.8 of \cite{Blanch1996}.
\end{proof}

We refer to Section~2 of \cite{Blanch1996}
for more details on $C_0 (X)$-algebras.


\begin{prp}\label{P_2X08_IndIsCXA}
Let $G$ be a \lcg, let $H \leqslant G$ be a closed subgroup,
and let $(H, A, \af)$ be an $H$-algebra.
Define
$\eta \colon C_0 (G / H) \to Z \big( M \big( \Ind_H^G (A) \big) \big)$
by
\[
(\eta (g) f) (s) = g(s H) \cdot f (s)
\]
for $g \in C_0 (G / H)$, $f \in \Ind_H^G (A)$, and $s \in G$.
This map makes $\Ind_H^G (A)$
a continuous $C_0 (G / H)$-algebra.
Moreover:
\begin{enumerate}
\item\label{P_2X08_IndIsCXA_Mor}
If $(H, B, \bt)$ is a second $H$-algebra,
and $\ph \colon A \to B$ is an $H$-morphism,
then $\Ind_H^G (\ph)$ is a morphism of $C_0 (G / H)$-algebras.
\item\label{P_2X08_IndIsCXA_Fib}
For every $x \in G$,
the map $\ev_x \colon \Ind_H^G (A) \to A$,
which evaluates a function in $\Ind_H^G (A)$ at $x$,
defines an isomorphism from $\Ind_H^G (A) (x H)$ to~$A$.
\end{enumerate}
\end{prp}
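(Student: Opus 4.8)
The plan is to verify, in order, the four assertions: (a) $\eta$ is a well-defined $*$-homomorphism into $Z(M(\Ind_H^G(A)))$ with dense range-span; (b) the resulting $C_0(G/H)$-algebra is continuous; (c) induced morphisms are $C_0(G/H)$-morphisms; (d) the fiber at $xH$ is $A$ via $\ev_x$. First I would check that for $g \in C_0(G/H)$ the formula $(\eta(g)f)(s) = g(sH) f(s)$ really lands in $\Ind_H^G(A)$: boundedness is clear since $\|g\|_\infty \|f\|$ dominates, the covariance condition $\af_h((\eta(g)f)(sh)) = g(shH)\af_h(f(sh)) = g(sH)f(s)$ holds because $shH = sH$, and $sH \mapsto \|(\eta(g)f)(s)\| = |g(sH)|\,\|f(s)\|$ is a product of a $C_0(G/H)$ function with a $C_0(G/H)$ function, hence in $C_0(G/H)$. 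Multiplier-ness is immediate from the pointwise definition, centrality holds because multiplication by the scalar $g(sH)$ commutes with everything pointwise, and $*$-homomorphism is pointwise checking. For the density of $\{\eta(g)f\}$, I would invoke a partition-of-unity / local-triviality argument: given $f \in \Ind_H^G(A)$ and $\ep > 0$, the function $sH \mapsto \|f(s)\|$ has compact support in $G/H$, so one chooses $g \in C_c(G/H)$ equal to $1$ on a neighborhood of that support; then $\eta(g)f = f$ exactly. (This actually shows the span is all of $\Ind_H^G(A)$, consistent with \autoref{P_2X08PropCXA}(\ref{P_2X08PropCXA_CFact}).)

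Next, for continuity of the $C_0(G/H)$-algebra structure, I would identify the fiber $\Ind_H^G(A)(xH)$ concretely, which is also what part~(\ref{P_2X08_IndIsCXA_Fib}) asks, so I would do (d) first and feed it back. The ideal $\Ind_H^G(A)(G/H \setminus \{xH\})$ consists, by the density argument above applied to functions $g$ vanishing near $xH$, precisely of those $f \in \Ind_H^G(A)$ with $f(x) = 0$ (equivalently $f|_{xH} = 0$, using covariance). Thus $\ev_x$ has kernel exactly $\Ind_H^G(A)(G/H\setminus\{xH\})$, and surjectivity of $\ev_x$ onto $A$ follows because given $a \in A$ one can build $f \in \Ind_H^G(A)$ with $f(x) = a$ by first defining $f$ on $xH$ via $f(xh) = \af_{h^{-1}}(a)$ and then extending continuously with $C_0$-decay using a cutoff on $G/H$ (this is a standard "sections exist" fact for the induced-algebra bundle; it is the analog of \autoref{P_2X08PropCXA}(\ref{P_2X08PropCXA_CFact}) made pointwise). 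So $\ev_x$ descends to the claimed isomorphism $\Ind_H^G(A)(xH) \cong A$. Then continuity of $\check{f}$ is exactly the statement that $sH \mapsto \|\ev_s(f)\| = \|f(s)\|$ is continuous on $G/H$, which is built into the definition of $\Ind_H^G(A)$ (the condition $sH \mapsto \|f(s)\|$ lies in $C_0(G/H)$, in particular is continuous). For~(\ref{P_2X08_IndIsCXA_Mor}), the computation $\Ind_H^G(\ph)(\eta(g)f)(s) = \ph(g(sH)f(s)) = g(sH)\ph(f(s)) = (\eta(g)\Ind_H^G(\ph)(f))(s)$ is immediate from $\ph$ being a $*$-homomorphism (scalars pull out).

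The main obstacle I expect is the surjectivity of $\ev_x \colon \Ind_H^G(A) \to A$ — equivalently, producing, for each $a \in A$, a global bounded covariant function with the prescribed value at $x$ and with $\|f(\cdot)\| \in C_0(G/H)$. The covariance constraint forces the values along the whole coset $xH$, and one must check that $h \mapsto \af_{h^{-1}}(a)$ is continuous (it is, by strong continuity of $\af$) and that the extension off $xH$ can be done continuously; the cleanest route is to pick a continuous compactly supported cross-section-like cutoff $g \in C_c(G/H)$ with $g(xH) = 1$, pick any bounded continuous $\tilde{f}\colon G \to A$ agreeing with the prescribed values on a neighborhood (using a local cross section $G/H \supset V \to G$ near $xH$, which exists since $G \to G/H$ admits local continuous sections when $G$ is locally compact and second countable), and then average/restrict to enforce covariance and multiply by $g$ to get $C_0$-decay. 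Everything else is routine pointwise bookkeeping once this section-existence point is settled; I would cite the standard local-section fact for $G \to G/H$ (e.g. from \cite{Ech2010}) rather than reprove it.
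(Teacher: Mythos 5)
The routine parts of your proposal (well-definedness and centrality of $\eta$, the cutoff argument giving $\eta(C_0(G/H))\Ind_H^G(A)=\Ind_H^G(A)$, part~(\ref{P_2X08_IndIsCXA_Mor}), continuity of $s H\mapsto\|f(s)\|$, and the identification of $\ker(\ev_x)$ with $\Ind_H^G(A)(G/H\setminus\{xH\})$ up to a standard approximation) are fine and match what the paper treats as easy. The gap is exactly where you predicted the main obstacle: surjectivity of $\ev_x$. Your construction rests on the claim that $G\to G/H$ admits local continuous cross sections whenever $G$ is locally compact (and second countable), and you propose to cite this as standard. It is not a theorem: what always exists is a \emph{Borel} section, while continuous local sections require extra hypotheses (e.g.\ $G$ a Lie group, or $H$ a Lie group, or finite-dimensionality of $G/H$). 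A concrete counterexample within the hypotheses of the proposition (even compact, metrizable, second countable): take $G=\mathbb{T}^{\N}$ and $H=\{\pm1\}^{\N}$, so that $G/H\cong\mathbb{T}^{\N}$ via coordinatewise squaring. A continuous local section near the identity coset would restrict, on some coordinate beyond the finitely many constrained by a basic neighborhood, to a continuous global square root $\mathbb{T}\to\mathbb{T}$, which is impossible by a degree argument. So the step that carries the whole of part~(\ref{P_2X08_IndIsCXA_Fib}) is supported by a false general fact (and the proposition does not assume second countability in any case).

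The repair is the averaging idea you only gesture at (``average to enforce covariance''), made primary rather than auxiliary, and it is what the paper does: since $\ev_x$ is a ${}^*$-homomorphism, its range is closed, so it suffices to hit each $a\in A$ within $\ep$. Choose, by strong continuity of $\af$, a neighborhood $U$ of $1$ in $H$ with compact closure such that $\|\af_t(a)-a\|\le\ep/2$ for $t\in U$, choose a continuous $\ch\ge0$ supported in $U$ with $\int_H\ch\,d\mu=1$, and set $f(s)=\int_H\ch(x^{-1}st)\,\af_t(a)\,d\mu(t)$. Left invariance of the Haar measure of $H$ gives the covariance relation $\af_h(f(sh))=f(s)$, the function $sH\mapsto\|f(s)\|$ has compact support, and $\|f(x)-a\|\le\ep/2$. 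This avoids any section of $G\to G/H$ altogether; if you insist on an exact preimage rather than dense range, you must either use such an averaging with a partition-of-unity refinement or restrict to situations where local sections genuinely exist.
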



In particular, the fibers of $\Ind_H^G (A)$
as a $C_0 (G / H)$-algebra are all isomorphic to~$A$.
However, the isomorphism is not canonical.
In the proof below, the isomorphism for the fiber at $x H \in G / H$
depends on the choice of the coset representative~$x$.


\begin{proof}[Proof of \autoref{P_2X08_IndIsCXA}]
It is easy to check that $\et$ makes $\Ind_H^G (A)$
a continuous $C_0 (G / H)$-algebra,
and we omit the details.
The proof of~(\ref{P_2X08_IndIsCXA_Mor}) is immediate.
It remains to prove~(\ref{P_2X08_IndIsCXA_Fib}).
We abbreviate $\Ind_H^G (A)$ to $\Ind (A)$.

Let $x \in G$.
We show that $\ev_x$ is surjective.
It is immediate that
\[
\ker ( \ev_x) = \Ind (A) ( G / H \setminus \{ x H \} ),
\]
so this will complete the proof.

Since $\ev_x$ is a \stHm,
it is enough to show that it has dense image in $A$.
So let $a \in A$ and let $\ep > 0$.
We want to find $f \in \Ind_H^G (A)$ such that $\| f (x) - a \| < \ep$.
Let $\mu$ denote the Haar measure of $H$.
Since the action is continuous, there exists
an open neighborhood $U \subset H$
of the identity element $1 \in H$,
with compact closure,
such that $\| \af_s (a) - a \| \leq \frac{\ep}{2}$ for all $s \in U$.
Let $\ch \colon G \to [0, \infty)$ be a nonzero \ct{} function
with $\supp (\ch) \subset U$.
By scaling, we may assume $\int_H \ch \, d \mu = 1$.
We define a function $f \colon G \to A$ by
\[
f (s) = \int_H \ch (x^{- 1} s t) \cdot \af_t (a) \, d \mu (t)
\]
for $s \in G$.
The integral exists for all $s$, since the integrand
is continuous and has compact support.
We will now check that $f$ has the desired properties.

For $s \in G$ and $h \in H$ we have,
using left invariance of $\mu$ at the last step,
\[
\af_h ( f (s h) )
 = \af_h \left(
      \int_H \ch (x^{- 1} s h t) \cdot \af_t (a) \, d \mu (t)
     \right)
 = \int_H \ch (x^{- 1} s h t) \cdot \af_{h t} (a) \, d \mu (t)
 = f (s).
\]

The function $s H \mapsto \| f (s) \|$ has compact support,
so $f \in \Ind (A)$.
Moreover,
\begin{align*}
\| f (x) - a \|
& = \left\| \int_H \ch (t) \cdot \af_t (a) \, d \mu (t)
         - \int_H \ch (t) \cdot a \, d \mu (t) \right\|
      \\
& \leq \int_H \ch (t) \| \af_t (a) - a \| \, d \mu (t)
  \leq \frac{\ep}{2}
  < \ep.
\end{align*}
This completes the proof that $\ev_x$ is surjective.
\end{proof}


The following result is similar to
Lemma~3.2 of \cite{ThiWin2012}.
It is Lemma 2.1(iii) of \cite{Dad2009},
but the proof given there assumes that $X$ is compact.


\begin{lem}
\label{P_StoneWei}
Let $A$ be a $C_{0} (X)$-algebra with structure map
$\eta \colon C_0 (X) \to Z (M (A))$.
Assume $B \subset A$ is a $C^*$-subalgebra
satisfying the following two conditions:
\begin{enumerate}
\item\label{P_StoneWei_Fib}
For each $x \in X$,
the set $\{ b (x) \colon b \in B \}$ exhausts the fiber $A (x)$.
\item\label{P_StoneWei_Mult}
$\eta (C_0 (X)) B \subset B$, that is,
$B$ is invariant under multiplication by functions in $C_0 (X)$.
\end{enumerate}
Then $A = B$.
\end{lem}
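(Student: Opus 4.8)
The plan is to show that $B$ is dense in $A$; since $B$ is a \ca{} it is norm closed, so this yields $B = A$. Fix $a \in A$ and $\ep > 0$, and aim to produce $b \in B$ with $\| a - b \| < \ep$. The idea is a local-to-global patching over $X$: using condition~(\ref{P_StoneWei_Fib}) one finds, near each point, a single element of $B$ that approximates $a$ in the nearby fibers, and then one glues these local approximants together with a partition of unity, staying inside $B$ by condition~(\ref{P_StoneWei_Mult}). Throughout, fiberwise norms are controlled by the upper semicontinuous functions $\check{c}$ of \autoref{P_2X08PropCXA}.

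First I would note that $\check{a}$ is upper semicontinuous and vanishes at infinity (\autoref{P_2X08PropCXA}\,(\ref{P_2X08PropCXA_USC})), so the set $K = \{ x \in X \colon \check{a}(x) \geq \ep / 2 \}$ is compact. For each $x \in X$, condition~(\ref{P_StoneWei_Fib}) provides $b_x \in B$ with $b_x (x) = a(x)$, so that $\check{(a - b_x)}(x) = 0$; since $\check{(a - b_x)}$ is again upper semicontinuous, $U_x = \{ y \in X \colon \check{(a - b_x)}(y) < \ep / 2 \}$ is an open neighborhood of~$x$. Next I would cover the compact set $K$ by finitely many $U_{x_1}, \ldots, U_{x_n}$ and choose a subordinate partition of unity $g_1, \ldots, g_n \in C_{\mathrm{c}}(X)$ with $g_i \geq 0$, $\supp (g_i) \subset U_{x_i}$, $\sum_{i = 1}^{n} g_i \leq 1$ on $X$, and $\sum_{i = 1}^{n} g_i = 1$ on $K$; such functions exist because $X$ is locally compact Hausdorff. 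Then $b = \sum_{i = 1}^{n} g_i \cdot b_{x_i}$ lies in $B$ by condition~(\ref{P_StoneWei_Mult}) together with the fact that $B$ is a linear subspace.

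It then remains to estimate $\| a - b \|$ fiberwise. For $y \in X$, writing $a(y) = \big( 1 - \sum_{i} g_i (y) \big) a(y) + \sum_{i} g_i (y) \, a(y)$ shows that
\[
(a - b)(y)
 = \Big( 1 - {\textstyle\sum_{i}} g_i (y) \Big) a(y)
   + {\textstyle\sum_{i}} g_i (y) \, (a - b_{x_i})(y)
\]
is a convex combination in the fiber $A(y)$. If $g_i (y) > 0$ then $y \in U_{x_i}$, so $\check{(a - b_{x_i})}(y) < \ep / 2$; and either $y \in K$, where $1 - \sum_{i} g_i (y) = 0$, or $y \notin K$, where $\check{a}(y) < \ep / 2$. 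Hence every term appearing with a positive coefficient has norm $< \ep / 2$, so $\check{(a - b)}(y) < \ep / 2$ for all $y \in X$, and \autoref{P_2X08PropCXA}\,(\ref{P_2X08PropCXA_Norm}) gives $\| a - b \| \leq \ep / 2 < \ep$.

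The step I expect to require the most care is the passage from the classical compact case (which is the Stone--Weierstrass patching of \cite{Dad2009}) to the locally compact case: one must exploit that $\check{a}$ vanishes at infinity in order to confine the total mass $1$ of the partition of unity to a compact core $K$, while keeping $\sum_{i} g_i \leq 1$ globally so that the deficiency $1 - \sum_{i} g_i$ is only ever weighted against points where $\check{a}$ is already $< \ep / 2$. Everything else --- the choice of the liftings $b_x$, the existence of a subordinate partition of unity with compact supports, and the convexity estimate --- is routine once the upper semicontinuity and vanishing-at-infinity statements of \autoref{P_2X08PropCXA} are available.
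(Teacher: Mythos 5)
Your argument is correct, and it follows the same local-to-global patching strategy as the paper: lift fiberwise via condition~(\ref{P_StoneWei_Fib}), spread each lift over a neighborhood using upper semicontinuity of the fiber-norm functions, glue with a finite partition of unity that stays in $B$ by condition~(\ref{P_StoneWei_Mult}), and conclude with the norm formula of \autoref{P_2X08PropCXA}(\ref{P_2X08PropCXA_Norm}). The one genuine difference is the opening reduction. The paper first invokes the Cohen factorization $a = f a_0$ from \autoref{P_2X08PropCXA}(\ref{P_2X08PropCXA_CFact}) and truncates $f$ by some $g \in C_{\mathrm{c}}(X)$, so that the element actually being approximated vanishes fiberwise off the compact set $\supp (g)$; the patching is then done over $\supp(g)$, with a dummy term ($h_0 = 1 - \sum_k h_k$ paired with $b_0 = 0$) absorbing the complement. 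You instead localize directly to the compact superlevel set $K = \{ x \colon \check{a}(x) \geq \ep/2 \}$, which exists because \autoref{P_2X08PropCXA}(\ref{P_2X08PropCXA_USC}) asserts that $\check{a}$ is upper semicontinuous \emph{and} vanishes at infinity, and you let the deficiency $1 - \sum_i g_i$ be weighted only against points where $\check{a} < \ep/2$. Your route avoids the Cohen factorization entirely and is slightly cleaner; the paper's route uses only the factorization $A = C_0(X) A$ plus upper semicontinuity, so it would survive even if one only knew $\check{a}$ to be upper semicontinuous without the vanishing-at-infinity statement. Both proofs are complete as written; your convexity estimate (coefficients summing to exactly $1$, each active term of fiber norm $< \ep/2$) is sound, and the identity $(g \cdot c)(y) = g(y) c(y)$ in the fibers that you use implicitly is the same standard fact the paper uses in its own estimate.
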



\begin{proof}
It suffices to show that $B$ is dense in~$A$.
Let $a \in A$ and let $\ep > 0$.
Using \autoref{P_2X08PropCXA}(\ref{P_2X08PropCXA_CFact}),
choose $f \in C_0 (X)$ and $a_0 \in A$ such that $f a_0 = a$.
Choose $g \in C_{\mathrm{c}} (X)$
such that $\| f - g \| < \ep / (2 \| a_0 \| )$.
Then $\| g a_0 - a \| < \frac{\ep}{2}$
and $(g a_0) (x) = 0$ for $x \in X \setminus \supp (g)$.

For each point $x \in \supp (g)$,
choose $b_x \in B$ such that $b_x (x) = (g a_0) (x)$.
By
\autoref{P_2X08PropCXA}(\ref{P_2X08PropCXA_USC}),
there is an open set $U_x \subset X$
with $x \in U_x$ such that for all $y \in U_x$
we have $\| b_x (y) - (g a_0) (y) \| < \frac{\ep}{2}$.
Choose $x_1, x_2, \ldots, x_n \in \supp (g)$
such that the sets $U_{x_1}, U_{x_2}, \ldots, U_{x_n}$
cover $\supp (g)$.
Choose $h_1, h_2, \ldots, h_n \in C_{\mathrm{c}} (X)$
such that for $k = 1, 2, \ldots, n$
we have
$\supp (h_k) \subset U_{x_k}$
and $0 \leq h_k \leq 1$,
and such that $\sum_{k = 1}^n h_k \leq 1$
and is equal to~$1$ on $\supp (g)$.
Set $b = \sum_{k = 1}^n h_k b_{x_k}$.
Then $b \in B$.
We claim that $\| b - g a_0 \| \leq \frac{\ep}{2}$.
This will imply that $\| b - a \| < \ep$,
and complete the proof.

It suffices to show that $\| b (y) - (g a_0) (y) \| \leq \frac{\ep}{2}$
for $y \in X$.
Set $h_0 = 1 - \sum_{k = 1}^n h_k$.
Then $g a_0 = \sum_{k = 0}^n h_k g a_0$.
Set $b_k = b_{x_k}$ and $U_k = U_{x_k}$ for $k = 1, 2, \ldots, n$,
and set $b_0 = 0$ and $U_0 = X \setminus \supp (g)$.
Then for $k = 0, 1, \ldots, n$,
we have $\| b_k (y) - (g a_0) (y) \| < \frac{\ep}{2}$
whenever $h_k (y) \neq 0$.
Using this fact at the second step, we have
\[
\| b (y) - (g a_0) (y) \|
 \leq \sum_{k = 0}^n h_k (y) \| b_k (y) - (g a_0) (y) \|
 \leq \sum_{k = 0}^n h_k (y) \cdot \frac{\ep}{2}
 \leq \frac{\ep}{2}.
\]
This proves the claim, and completes the proof.
\end{proof}


The following result is Lemma~3.8 of \cite{KIRWAS1999B},
but the proof given in \cite{KIRWAS1999B}
does not address surjectivity of $\Ind_H^G (\pi)$.


\begin{prp}
\label{P_Ind_Exact}
Let $G$ be a \lcg, and let $H \leqslant G$ be a closed subgroup.
Then the induction functor $\Ind_H^G \colon \CatH \to \CatG$ is exact,
that is, given an $H$-equivariant short exact sequence of $H$-algebras
\[
0 \longrightarrow I
  \stackrel{\io}{\longrightarrow} A
  \stackrel{\pi}{\longrightarrow} B
  \longrightarrow 0,
\]
the induced $G$-equivariant sequence of $G$-algebras
\[
\xymatrix{
0 \ar[r]
& \Ind_H^G (I) \ar[rr]^{\Ind_H^G (\io)}
&
& \Ind_H^G (A) \ar[rr]^{\Ind_H^G (\pi)}
&
& \Ind_H^G (B) \ar[r]
& 0
}
\]
is also exact.
\end{prp}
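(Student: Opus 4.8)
The plan is to prove the three facts that together constitute exactness of the induced sequence: injectivity of $\Ind_H^G(\io)$, exactness at the middle term $\Ind_H^G(A)$, and surjectivity of $\Ind_H^G(\pi)$. The first two are essentially formal and go through by inheriting the defining conditions of $\Ind_H^G$ at the level of the functions involved. The real content is the surjectivity of $\Ind_H^G(\pi)$, which I would deduce from the Stone--Weierstrass type statement \autoref{P_StoneWei}, using the $C_0(G/H)$-algebra structure on an induced algebra and the identification of its fibers from \autoref{P_2X08_IndIsCXA}.

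For injectivity: if $f \in \Ind_H^G(I)$ and $\Ind_H^G(\io)(f) = \io \circ f = 0$, then $f(s) = 0$ for all $s \in G$ because $\io$ is injective. For exactness at $\Ind_H^G(A)$, the inclusion $\Ind_H^G(\io)\big(\Ind_H^G(I)\big) \subset \ker\big(\Ind_H^G(\pi)\big)$ is immediate from $\Ind_H^G(\pi) \circ \Ind_H^G(\io) = \Ind_H^G(\pi \circ \io) = 0$. Conversely, given $f \in \Ind_H^G(A)$ with $\pi(f(s)) = 0$ for all $s$, each value $f(s)$ lies in $\io(I) = \ker\pi$; since $\io$ is an isometric isomorphism of $I$ onto this closed ideal, the function $g = \io^{-1} \circ f \colon G \to I$ is again bounded and continuous, inherits the equivariance condition $\af_h(g(sh)) = g(s)$ (apply $\io$, which is $H$-equivariant) and the condition that $sH \mapsto \|g(s)\| = \|f(s)\|$ lie in $C_0(G/H)$, so $g \in \Ind_H^G(I)$ and $\Ind_H^G(\io)(g) = f$.

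For surjectivity of $\pi' = \Ind_H^G(\pi)$, I would set $B' = \pi'\big(\Ind_H^G(A)\big)$, which is a $C^*$-subalgebra of $\Ind_H^G(B)$, being closed as the image of a $\stHm$, and apply \autoref{P_StoneWei} to the inclusion $B' \subset \Ind_H^G(B)$, where $\Ind_H^G(B)$ is given the continuous $C_0(G/H)$-algebra structure of \autoref{P_2X08_IndIsCXA}. The multiplicativity hypothesis holds because $\pi'$ is a $C_0(G/H)$-morphism by \autoref{P_2X08_IndIsCXA}(\ref{P_2X08_IndIsCXA_Mor}), so $g \cdot \pi'(a) = \pi'(g \cdot a) \in B'$ for all $g \in C_0(G/H)$ and $a \in \Ind_H^G(A)$. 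For the fiber hypothesis, fix $x \in G$; by \autoref{P_2X08_IndIsCXA}(\ref{P_2X08_IndIsCXA_Fib}) the evaluation map $\ev_x$ induces isomorphisms $\Ind_H^G(A)(xH) \cong A$ and $\Ind_H^G(B)(xH) \cong B$, and directly from the definitions $\ev_x \circ \pi' = \pi \circ \ev_x$. Since $\ev_x \colon \Ind_H^G(A) \to A$ is surjective, the elements $\ev_x(\pi'(a)) = \pi(\ev_x(a))$ with $a \in \Ind_H^G(A)$ run over all of $\pi(A) = B$; transported through the fiber isomorphism, this says that $\{\, b(xH) \colon b \in B' \,\}$ exhausts the fiber $\Ind_H^G(B)(xH)$. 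Hence \autoref{P_StoneWei} yields $B' = \Ind_H^G(B)$, that is, $\pi'$ is surjective.

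I expect the surjectivity of $\Ind_H^G(\pi)$ to be the only genuinely nontrivial step, and it is precisely the point not addressed in \cite{KIRWAS1999B}: fiberwise surjectivity of a $C_0(X)$-morphism does not obviously imply surjectivity, and this is exactly where \autoref{P_StoneWei} is needed. In writing this out I would therefore take care to verify both hypotheses of \autoref{P_StoneWei} in full, and to note that the fiber identifications $\Ind_H^G(A)(xH) \cong A$ and $\Ind_H^G(B)(xH) \cong B$ depend on a choice of coset representative, which is harmless here since only the exhaustion of the fibers is used.
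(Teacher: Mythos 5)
Your proposal is correct and follows essentially the same route as the paper's proof: the surjectivity of $\Ind_H^G(\pi)$ is obtained by applying \autoref{P_StoneWei} to the image, verifying the multiplicativity hypothesis via \autoref{P_2X08_IndIsCXA}(\ref{P_2X08_IndIsCXA_Mor}) and the fiber-exhaustion hypothesis via the commuting square with the evaluation maps from \autoref{P_2X08_IndIsCXA}(\ref{P_2X08_IndIsCXA_Fib}). The only difference is that you write out the (formal) injectivity and middle-exactness checks, which the paper dismisses as straightforward.
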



\begin{proof}
To simplify the notation, we abbreviate $\Ind_H^G$ to $\Ind$.

We may think of $I$ as an $H$-invariant ideal in $A$,
so that $\io$ is just the inclusion.
It follows that $\Ind (I)$ may be considered
as an ideal in $\Ind (A)$,
and then $\Ind (\io)$ is also just the inclusion morphism.

It is straightforward to check
that the sequence is exact in the middle,
that is, $\ker(\Ind (\pi)) = \Ind (I) \subset \Ind (A)$.
Thus, it remains to check that $\Ind (\pi)$ is surjective.
Following \autoref{P_2X08_IndIsCXA},
we consider $\Ind (A)$ and $\Ind (B)$ as $C_0 (G / H)$-algebras.
We want to apply \autoref{P_StoneWei}.

Condition~(\ref{P_StoneWei_Mult}) of \autoref{P_StoneWei}
follows immediately from
\autoref{P_2X08_IndIsCXA}(\ref{P_2X08_IndIsCXA_Mor}).

Let us verify condition~(\ref{P_StoneWei_Fib}).
For $x \in G$, let $\ev_x^A \colon \Ind (A) \to A$ and
$\ev_x^B \colon \Ind (B) \to B$ be the evaluation maps at $x$.
By \autoref{P_2X08_IndIsCXA}(\ref{P_2X08_IndIsCXA_Fib}),
these maps are surjective and implement the isomorphisms
$\Ind (A) (x H) \cong A$ and $\Ind (B) (x H) \cong B$.
We have $\ev_x^B \circ \Ind (\pi) = \pi \circ \ev_x^A$, that is,
the following diagram commutes:
\[
\xymatrix{
\Ind (A) \ar[d]_{\Ind (\pi)} \ar[r]^>>>>>{\ev_x^A}
& A \ar[d]^{\pi}  \\
\Ind (B) \ar[r]^>>>>>{\ev_x^B}
& B. \\
}
\]
Since $\ev_x^A$ and $\pi$ are surjective,
it follows that the image of $\Ind (\pi)$ exhausts each fiber
of $\Ind (B)$.
This verifies condition~(\ref{P_StoneWei_Fib}) of \autoref{P_StoneWei}.
So $\Ind (\pi)$ is surjective.
\end{proof}


\begin{prp}
\label{P_Ind_Cts}
Let $G$ be a \lcg, and let $H \leqslant G$ be a closed subgroup.
Then the induction functor $\Ind_H^G \colon \CatH \to \CatG$
is continuous, that is, given an $H$-equivariant direct system
\[
A_1 \longrightarrow A_2
\longrightarrow A_3
\longrightarrow \cdots,
\]
there is a natural isomorphism
\[
{\operatorname{Ind}}_H^G \big( \varinjlim A_k \big)
  \cong \varinjlim {\operatorname{Ind}}_H^G (A_k).
\]
\end{prp}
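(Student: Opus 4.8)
The plan is to construct the natural isomorphism explicitly and verify it is both injective and surjective, using the $C_0(G/H)$-algebra structure from \autoref{P_2X08_IndIsCXA} together with \autoref{P_StoneWei}. Write $A = \varinjlim A_k$ with connecting morphisms $\mu_k \colon A_k \to A_{k+1}$ and canonical morphisms $\ld_k \colon A_k \to A$, so that $\bigcup_k \ld_k(A_k)$ is dense in $A$. Applying the functor $\Ind = \Ind_H^G$ to each $\ld_k$ gives $G$-morphisms $\Ind(\ld_k) \colon \Ind(A_k) \to \Ind(A)$ compatible with the $\Ind(\mu_k)$, hence by the universal property of the direct limit a canonical $G$-morphism $\Th \colon \varinjlim \Ind(A_k) \to \Ind(A)$. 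Naturality in the direct system is automatic from the functoriality of $\Ind$ and of $\varinjlim$. It remains to show $\Th$ is an isomorphism.

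For injectivity, I would exploit the fact that a $\ast$-homomorphism out of a direct limit is injective if and only if each $\Ind(\ld_k)$ is ``asymptotically isometric'' in the appropriate sense; more concretely, $\ker \Th = \varinjlim \ker \Ind(\ld_k)$, and since $\Ind$ is exact by \autoref{P_Ind_Exact} and $\ld_k$ has kernel $\overline{\bigcup_{j} \ker(\mu_{k+j} \circ \cdots \circ \mu_k)}$, one checks that these kernels assemble to zero in the limit. Alternatively, and perhaps more cleanly, use the fiberwise description: for $x \in G$ the evaluation $\ev_x \colon \Ind(A)(xH) \to A$ is an isomorphism (\autoref{P_2X08_IndIsCXA}(\ref{P_2X08_IndIsCXA_Fib})), and evaluation commutes with the direct limit on each fiber because taking a fiber of a $C_0(X)$-algebra is an exact operation that commutes with sequential direct limits; since $A = \varinjlim A_k$ fiberwise and norms are suprema over fibers by \autoref{P_2X08PropCXA}(\ref{P_2X08PropCXA_Norm}), $\Th$ is isometric.

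For surjectivity, I would apply \autoref{P_StoneWei} to the $C_0(G/H)$-subalgebra $B = \Th\big(\varinjlim \Ind(A_k)\big) \subset \Ind(A)$. Condition~(\ref{P_StoneWei_Mult}) holds since $\Th$ is a $C_0(G/H)$-morphism, which follows from each $\Ind(\ld_k)$ being a $C_0(G/H)$-morphism by \autoref{P_2X08_IndIsCXA}(\ref{P_2X08_IndIsCXA_Mor}). Condition~(\ref{P_StoneWei_Fib}) requires that for each $xH \in G/H$ the set $\{ b(xH) \colon b \in B\}$ exhausts the fiber $\Ind(A)(xH) \cong A$; via the evaluation isomorphism this amounts to showing that $\bigcup_k \ld_k(A_k)$ is dense in $A$, which is exactly the defining property of the direct limit. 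Then \autoref{P_StoneWei} gives $B = \Ind(A)$, i.e. $\Th$ is surjective.

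The main obstacle I anticipate is the injectivity/isometry step: making precise that passing to fibers of a $C_0(G/H)$-algebra commutes with sequential direct limits. The subtlety is that $\varinjlim \Ind(A_k)$ is a priori only a $C_0(G/H)$-algebra whose fiber at $xH$ maps to $\varinjlim(\Ind(A_k)(xH)) \cong \varinjlim A_k = A$, and one must check this map is an isomorphism rather than merely a surjection with possibly nontrivial kernel. This can be handled by a direct norm estimate: given $\xi = \ld_k^{\Ind}(f)$ for some $f \in \Ind(A_k)$ (writing $\ld_k^{\Ind}$ for the canonical map into the limit), one has $\|\xi\| = \lim_{j} \|(\Ind(\mu_{k+j} \circ \cdots \circ \mu_k))(f)\|$, and each term equals $\sup_{xH} \|(\mu_{k+j}\circ\cdots\circ\mu_k)(f(x))\|$ by \autoref{P_2X08PropCXA}(\ref{P_2X08PropCXA_Norm}); interchanging the limit and the supremum (justified because the sequence of norms is nonincreasing in $j$ pointwise in $xH$, by a Dini-type argument using upper semicontinuity from \autoref{P_2X08PropCXA}(\ref{P_2X08PropCXA_USC})) shows $\|\xi\| = \sup_{xH} \lim_j \| \cdots \| = \sup_{xH} \|(\Th\xi)(xH)\| = \|\Th\xi\|$. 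Once this estimate is in hand the rest is routine.
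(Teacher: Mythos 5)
Your proposal is correct and follows essentially the same route as the paper: the same canonical map out of $\varinjlim \Ind_H^G(A_k)$, surjectivity via \autoref{P_StoneWei} using the evaluation maps and density of $\bigcup_k \im(\ph_k^{\infty})$, and injectivity via the monotone interchange of $\sup$ over $G/H$ with the limit along the system. The one point you leave slightly informal --- justifying that interchange when $G/H$ is noncompact and the pointwise limit need not be continuous --- is exactly what the paper handles by passing to the one point compactification $(G/H)^{+}$, truncating with $\max(\,\cdot\,,\ep/3)$, and applying Dini's theorem, which is the same ``upper semicontinuity plus compactness'' mechanism you indicate.
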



\begin{proof}
To simplify the notation, we abbreviate $\Ind_H^G$ to $\Ind$.
As explained in \autoref{P_2X08_IndIsCXA},
we consider the induced algebras as $C_0 (G / H)$-algebras.

Denote the connecting $H$-morphisms by
$\ph_m^n \colon A_m \to A_n$ for $m \leq n$.
Let $A = \varinjlim A_k$, and denote the $H$-morphisms
to the direct limit by $\ph_m^{\infty} \colon A_m \to A$.
Denote the induced $G$-morphisms by
$\te_m^n \colon \Ind (A_m) \to \Ind (A_n)$,
and let $B = \varinjlim \Ind (A_k)$,
together with $G$-morphisms
$\te_m^{\infty} \colon \Ind (A_m) \to B$.

The maps $\ph_k^{\infty}$ induce $G$-morphisms
$\Ind (\ph_k^{\infty}) \colon \Ind (A_k) \to \Ind (A)$,
and these induce a $G$-morphism $\ps$ from the direct limit~$B$
to $\Ind (A)$.
The situation is shown in the following commutative diagram:
\[
\xymatrix{
\Ind (A_1) \ar[r]^>>>>>{\te_1^2}
   \ar@/_1pc/[drrrr]_{\Ind (\ph_1^{\infty})}
& \Ind (A_2) \ar[r] \ar@/_/[drrr]^{\Ind (\ph_2^{\infty})}
& \ldots \ar[rr]
& & \varinjlim \Ind (A_k) = B \ar[d]^{\ps} \\
& & & & \Ind (A).
}
\]

To show that $\ps$ is surjective, we apply \autoref{P_StoneWei}.

To verify condition~(\ref{P_StoneWei_Mult}) of \autoref{P_StoneWei},
let $b \in B$
and $f \in C_0 (G / H)$ be given.
We will show that for every $\ep > 0$
there exists $c \in B$ such that
$\| f \cdot \ps (b) - \ps (c) \| < \ep$.
Fix $\ep > 0$.
By properties of the direct limit,
there exist $k \in \N$ and $a \in \Ind (A_k)$
such that $\| b - \te_k^{\infty} (a) \| < \ep/ \| f \|$.
One checks that $c = \te_k^{\infty} ( f \cdot a )$
has the desired properties.

To verify condition~(\ref{P_StoneWei_Fib}) of \autoref{P_StoneWei},
we need to show that every fiber of
$\Ind (A)$ is exhausted by the image of $\ps$.
We denote by $\ev_x^k \colon \Ind (A_k) \to A_k$ and
$\ev_x^{\infty} \colon \Ind (A) \to A$ the evaluation maps at $x \in G$.
Then it is enough to show that $\ev_x^{\infty} \circ \ps$ is surjective
for every $x \in G$.

For each $k \in \N$, we have
\[
\ev_x^{\infty} \circ \ps \circ \te_k^{\infty}
           = \ev_x^{\infty} \circ \Ind (\ph_k^{\infty})
           = \ph_k^{\infty} \circ \ev_x^k.
\]
Since $\ev_x^k \colon \Ind (A_k) \to A_k$ is surjective
(by \autoref{P_2X08_IndIsCXA}(\ref{P_2X08_IndIsCXA_Fib})),
the image of $\ev_x^{\infty} \circ \ps$
contains the image of $\ph_k^{\infty}$.
Thus, the image of $\ev_x^{\infty} \circ \ps$
contains $\bigcup_{k = 1}^{\infty} \im (\ph_k^{\infty})$,
which is dense in $A$ by properties of the direct limit.
It follows that the image of $\ps$ exhausts each fiber of $\Ind (A)$.
We have verified the conditions of \autoref{P_StoneWei},
so we have shown that $\ps$ is surjective.

To show that $\ps$ is injective,
let $b \in B$, and suppose that $\ps (b) = 0$.
Let $\ep > 0$;
we show that $\| b \| < \ep$.
By properties of $B$ as a direct limit,
there exist $k \in \N$ and $a \in \Ind (A_k)$
such that $\| b - \te_k^{\infty} (a) \| < \frac{\ep}{3}$.
For $n \geq k$, let $f_n \in C_0 (G / H)$ be defined by
$f_n (s H) = \| \te_k^n (a) (s H) \|$.
One checks that $(f_n)_{n \in \N}$
is a nonincreasing sequence of functions
such that $\lim_{n \to \infty} f_n (s H) < \frac{\ep}{3}$
for each $s \in G$.
For $n \in \N$, define a continuous function $g_n$ on the one point
compactification $(G / H)^{+}$ by
$g_n (s H) = \max \big( f_n (s H), \, \frac{\ep}{3} \big)$
for $s \in G$ and $g_n (\infty) = \frac{\ep}{3}$.
The functions $g_n$ decrease pointwise to the constant function
with value $\frac{\ep}{3}$.
Since $(G / H)^{+}$ is compact,
Dini's Theorem (Proposition~11 in Chapter~9 of \cite{Roy1988})
implies that the convergence is uniform.
So there exists $n \geq k$
such that $\| f_n \| < \frac{2 \ep}{3}$.
Then $\| \te_k^n (a) \| = \| f_n \| < \frac{2 \ep}{3}$
by \autoref{P_2X08PropCXA}(\ref{P_2X08PropCXA_Norm}),
and thus also $\| \te_k^{\infty} (a) \| < \frac{2 \ep}{3}$.
It follows that $\| b \| < \ep$, as desired.

This completes the proof that $\ps$ is an isomorphism.
\end{proof}


\begin{lem}
\label{P_Eval}
Let $G$ be a \lcg, and let $H \leqslant G$ be a closed subgroup.
For any $H$-algebra $A$,
let $\ev_1^A \colon \Ind_H^G (A) \to A$
be the map $\ev_1^A (f) = f (1)$ that evaluates a function at the
identity element $1 \in G$.
Then  $\ev_1^A$ is an $H$-morphism, and is natural in~$A$.
\end{lem}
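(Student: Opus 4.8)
The plan is to verify the three assertions directly from \autoref{D_2X08Ind}. First, $\ev_1^A$ is a \stHm: it is the restriction to $\Ind_H^G(A) \subset \Cb(G, A)$ of the point evaluation $\Cb(G, A) \to A$ at $1 \in G$, and point evaluations are \stHm{s} because the algebraic operations and the involution on $\Cb(G, A)$ are computed pointwise.

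Next I would check $H$-equivariance. Regarding $\Ind_H^G(A)$ as an $H$-algebra via the restriction of the induced action $\Ind_H^G(\af)$ to $H$, one must show that $\ev_1^A \circ \big(\Ind_H^G(\af)\big)_h = \af_h \circ \ev_1^A$ for all $h \in H$. For $f \in \Ind_H^G(A)$, the formula for the induced action gives $\ev_1^A\big(\big(\Ind_H^G(\af)\big)_h(f)\big) = f(h^{-1})$, while the defining relation $\af_h(f(sh)) = f(s)$ of the induced algebra, applied with $s = 1$ and with $h$ replaced by $h^{-1}$, yields $\af_{h^{-1}}\big(f(h^{-1})\big) = f(1)$, i.e.\ $f(h^{-1}) = \af_h\big(f(1)\big) = \af_h\big(\ev_1^A(f)\big)$. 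Comparing the two computations gives the desired identity.

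Finally, for naturality, given an $H$-morphism $\ph \colon A \to B$, a one-line computation from the definition of $\Ind_H^G(\ph)$ shows $\ev_1^B\big(\Ind_H^G(\ph)(f)\big) = \ph(f(1)) = \ph\big(\ev_1^A(f)\big)$ for every $f \in \Ind_H^G(A)$, so $\ph \circ \ev_1^A = \ev_1^B \circ \Ind_H^G(\ph)$, which is exactly the naturality statement.

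I do not expect a genuine obstacle here; the only step that demands a moment's care is the equivariance computation, where one must substitute into the defining relation $\af_h(f(sh)) = f(s)$ correctly --- taking $s = 1$ and replacing $h$ by $h^{-1}$ --- in order to pass from that relation to $f(h^{-1}) = \af_h(f(1))$.
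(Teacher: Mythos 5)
Your proof is correct and follows essentially the same route as the paper: the ${}^*$-homomorphism property and naturality are immediate, and equivariance is the only point needing a computation, which you carry out correctly (the paper substitutes $s = h^{-1}$ into $\af_h(f(sh)) = f(s)$, while you take $s=1$ and replace $h$ by $h^{-1}$; the two substitutions yield the same identity $f(h^{-1}) = \af_h(f(1))$).
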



\begin{proof}
We need only check equivariance.
Let $\af \colon H \to \Aut (A)$ denote the action on~$A$.
Let $\gm = \Ind_H^G (\af)$ be the induced action of $G$
on $\Ind_H^G (A)$.
For $f \in \Ind_H^G (A)$ and $h \in H$,
we have, using the definition of $\Ind_H^G (A)$
at the third step,
\[
\ev_1^A (\gm_h (f))
  = (\gm_h (f)) (1)
  = f (h^{- 1})
  = \af_h (f (1))
  = \af_h (\ev_1^A (f)),
\]
as desired.
\end{proof}


\begin{lem}
\label{P_Maps}
Let $G$ be a \lcg, and let $H \leqslant G$ be a closed subgroup
such that $G / H$ is compact.
Let $(G, A, \af)$ be a $G$-algebra,
and let $(H, B, \bt)$ be an $H$-algebra.
Let $\ph \colon A \to B$ be an $H$-morphism.
Then there is a $G$-morphism $\et \colon A \to \Ind_H^G (B)$
such that $\et (a) (s) = \ph \big( \af_s^{- 1} (a) \big)$
for all $a \in A$ and $s \in G$.
\end{lem}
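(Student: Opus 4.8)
The plan is to \emph{define} the map $\et$ by the explicit formula given in the statement and then verify, in order, that it takes values in $\Ind_H^G(B)$, that it is a \stHm, and that it is $G$-equivariant. So first, for $a \in A$, set $\et(a)$ to be the function $s \mapsto \ph\big(\af_s^{-1}(a)\big)$ on $G$. I would check that $\et(a) \in \Ind_H^G(B)$: boundedness is clear since $\|\et(a)(s)\| \le \|\ph\|\,\|a\| \le \|a\|$; the covariance condition $\bt_h\big(\et(a)(sh)\big) = \et(a)(s)$ for $s \in G$, $h \in H$ follows from $H$-equivariance of $\ph$ together with $\af_{sh}^{-1} = \af_h^{-1}\af_s^{-1}$, namely $\bt_h\big(\ph(\af_h^{-1}\af_s^{-1}(a))\big) = \ph(\af_s^{-1}(a))$; and the condition that $sH \mapsto \|\et(a)(s)\|$ lies in $C_0(G/H)$ is automatic here because $G/H$ is compact, so every bounded continuous function on $G/H$ is in $C_0(G/H)$. (One should also note that $s \mapsto \af_s^{-1}(a)$ is continuous, hence so is $s \mapsto \et(a)(s)$, since $\ph$ is contractive; this uses continuity of the action $\af$.) That $\et$ is a \stHm{} is immediate, as it is the composition of the \stHm{} $a \mapsto (s \mapsto \af_s^{-1}(a))$ into $\Cb(G,A)$ with the pointwise application of $\ph$.

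Next I would verify $G$-equivariance. Writing $\gm = \Ind_H^G(\bt)$ for the induced action on $\Ind_H^G(B)$, I need $\et \circ \af_t = \gm_t \circ \et$ for all $t \in G$. Evaluating both sides at $s \in G$: on the one hand $\et(\af_t(a))(s) = \ph\big(\af_s^{-1}\af_t(a)\big) = \ph\big(\af_{t^{-1}s}^{-1}(a)\big)$; on the other hand $\big(\gm_t(\et(a))\big)(s) = \et(a)(t^{-1}s) = \ph\big(\af_{t^{-1}s}^{-1}(a)\big)$ by the definition of the induced action. These agree, so $\et$ is a $G$-morphism.

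The argument is essentially a direct verification, so there is no single serious obstacle; the only point requiring a little care is confirming that $\et(a)$ actually lands in $\Ind_H^G(B)$ — specifically the $C_0(G/H)$ condition, which is exactly where the hypothesis that $G/H$ is compact enters, and the continuity of $s \mapsto \et(a)(s)$, which relies on strong continuity of $\af$. Everything else is routine bookkeeping with the definitions of $\Ind_H^G$ (\autoref{D_2X08Ind}) and of $H$-equivariance.
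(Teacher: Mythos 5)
Your proof is correct and is essentially the same as the paper's: a direct verification that the explicit formula lands in $\Ind_H^G(B)$ (with the $C_0(G/H)$ condition being automatic by compactness of $G/H$ and the $H$-covariance following from $H$-equivariance of $\ph$), followed by the same computation establishing $G$-equivariance.
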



\begin{proof}
We only have to prove that the formula for $\et (a)$
defines an element of $\Ind_H^G (B)$
and that the resulting map from $A$ to $\Ind_H^G (B)$
is $G$-equivariant.
Let $a \in A$.

For the first,
since $G / H$ is compact, the function
$s H \mapsto \| \eta (a) (s) \|$ is obviously in $C_0 (G / H)$.
Let $s \in G$ and $h \in H$.
Then
\[
\bt_h \big( \et (a) (s h) \big)
  = \bt_h \big( \ph \big( \af_{s h}^{- 1} (a) \big) \big)
  = \ph \big( \af_h \circ \af_{h^{- 1} s^{- 1}} (a) \big)
  = \et (a) (s),
\]
as desired.

For the second,
let $\gm = \Ind_H^G (\af)$ be the action of $G$ on $\Ind_H^G (B)$.
Let $s, t \in G$.
Then
\[
\gm_s (\et (a)) (t)
 = \et (a) (s^{- 1} t)
 = \ph \big( \af_{t^{- 1}} (\af_s (a)) \big)
 = \et (\af_s (a)) (t),
\]
as desired.
\end{proof}


\begin{thm}
\label{P_SjSubGp}
Let $G$ be a \lcg, and let $H \leqslant G$ be a closed subgroup
such that $G / H$ is compact.
Let $(G, A, \af)$ be a $G$-algebra.
If $\af$ is \eqsj,
then $\af |_H$ is \eqsj.
\end{thm}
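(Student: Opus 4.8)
The plan is to transport a lifting problem for $\af |_H$ up to a lifting problem for $\af$ through the induction functor $\Ind_H^G$, solve it there using equivariant \sjy{} of $\af$, and then push the solution back down along the evaluation morphism $\ev_1$ of \autoref{P_Eval}. Morally, $\Ind_H^G$ is right adjoint to restriction, with ``unit'' supplied by \autoref{P_Maps} (which is where compactness of $G/H$ is needed) and ``counit'' by \autoref{P_Eval}, and $\Ind_H^G$ is both exact and continuous; these are exactly the properties that let a lifting-along-an-inductive-chain problem be transported.

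Concretely, suppose we are given the data of \autoref{D_EqSj} for $\af |_H$: an $H$-algebra $(H, C, \gm)$, an increasing sequence $J_1 \subset J_2 \subset \cdots$ of $H$-invariant ideals of $C$ with $J = \overline{\bigcup_{n} J_n}$, the quotient $H$-morphisms $\pi_n \colon C / J_n \to C / J$, and an $H$-morphism $\ph \colon A \to C / J$. I would apply $\Ind_H^G$ to all of this. By \autoref{P_Ind_Exact}, each $\Ind_H^G(J_n)$ is a $G$-invariant ideal of $\Ind_H^G(C)$, the natural map gives an isomorphism $\Ind_H^G(C)/\Ind_H^G(J_n) \cong \Ind_H^G(C/J_n)$, similarly for $J$, and one checks that under these identifications $\Ind_H^G(\pi_n)$ becomes the quotient $G$-morphism. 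By \autoref{P_Ind_Cts} applied to the direct system $J_1 \to J_2 \to \cdots$ with injective connecting maps, the natural isomorphism $\varinjlim \Ind_H^G(J_n) \cong \Ind_H^G(J)$ identifies $\overline{\bigcup_n \Ind_H^G(J_n)}$ with $\Ind_H^G(J)$ as ideals in $\Ind_H^G(C)$. Thus $\big(G, \Ind_H^G(C), \Ind_H^G(\gm)\big)$ together with the ideals $\Ind_H^G(J_n)$ and the maps $\Ind_H^G(\pi_n)$ form an instance of the situation of \autoref{D_EqSj} for the $G$-algebra $A$.

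It then remains to feed in a $G$-morphism from $A$ into the limit quotient and extract an $H$-morphism at the end. Since $G/H$ is compact, \autoref{P_Maps} (with $B = C/J$ carrying the induced $H$-action) produces a $G$-morphism $\et \colon A \to \Ind_H^G(C/J)$ with $\et(a)(s) = \ph(\af_s^{-1}(a))$; composed with the identification above, $\et$ is a $G$-morphism from $A$ to $\Ind_H^G(C)/\Ind_H^G(J)$. Equivariant \sjy{} of $\af$ now yields $n \in \N$ and a $G$-morphism $\widetilde\ps \colon A \to \Ind_H^G(C/J_n)$ with $\Ind_H^G(\pi_n) \circ \widetilde\ps = \et$. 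Set $\ps = \ev_1^{C/J_n} \circ \widetilde\ps \colon A \to C/J_n$; this is an $H$-morphism since $\ev_1^{C/J_n}$ is one by \autoref{P_Eval}. Using naturality of $\ev_1$ at the relevant step and then $\ev_1^{C/J}(\et(a)) = \et(a)(1) = \ph(a)$, one computes
\[
\pi_n \circ \ps = \pi_n \circ \ev_1^{C/J_n} \circ \widetilde\ps = \ev_1^{C/J} \circ \Ind_H^G(\pi_n) \circ \widetilde\ps = \ev_1^{C/J} \circ \et = \ph ,
\]
which is the required lift. Finally, $(H, A, \af |_H)$ is separable, because $A$ is separable and $H$, being a closed subgroup of the second countable group $G$, is second countable; hence $\af |_H$ is \eqsj.

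I expect the main obstacle to be the bookkeeping of the second paragraph: checking that $\Ind_H^G$ carries the increasing chain of ideals $J_n \subset C$ with union $J$ to an increasing chain of ideals $\Ind_H^G(J_n) \subset \Ind_H^G(C)$ whose union is $\Ind_H^G(J)$, and that the quotient maps correspond compatibly under the canonical isomorphisms $\Ind_H^G(C)/\Ind_H^G(J_n) \cong \Ind_H^G(C/J_n)$. This is precisely where exactness (\autoref{P_Ind_Exact}) and continuity (\autoref{P_Ind_Cts}) of the induction functor are both used; compactness of $G/H$ itself enters only through \autoref{P_Maps}.
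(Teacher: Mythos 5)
Your proposal is correct and follows essentially the same route as the paper's proof: induct the lifting problem via $\Ind_H^G$, using \autoref{P_Ind_Exact} and \autoref{P_Ind_Cts} to identify the induced ideals and quotients, solve it with equivariant semiprojectivity of $\af$ after mapping in via \autoref{P_Maps}, and push the lift back down with the natural evaluation morphism of \autoref{P_Eval}. The only additions (the explicit naturality computation and the separability remark) are minor details left implicit in the paper.
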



\begin{proof}
Let $(H, C, \gm)$ be an $H$-algebra.
To simplify the notation, we abbreviate $\Ind_H^G$ to $\Ind$.
The maps to be introduced are shown
in the diagram near the end of the proof.
Let
$J_0 \subset J_1 \subset \cdots$ be $H$-invariant ideals in~$C$,
let $J = {\overline{\bigcup_{n = 0}^{\infty} J_n}}$,
let
\[
\kp \colon C \to C / J,
\, \, \, \, \, \,
\kp_n \colon C \to C / J_n,
\andeqn
\pi_n \colon C / J_n \to C / J
\]
be the quotient maps,
and let $\ph \colon A \to C / J$
be an $H$-morphism.
Then
\[
\Ind (J)
 = {\overline{\bigcup_{n = 0}^{\infty} \Ind (J_n)}}
\]
by \autoref{P_Ind_Cts}.
Moreover, \autoref{P_Ind_Exact} allows us to identify
the quotients
$\Ind (C) / \Ind (J_n)$
with $\Ind (C / J_n)$
and $\Ind (C) / \Ind (J)$
with $\Ind (C / J)$,
with quotient maps
\[
\Ind (\kp)
 \colon \Ind (C)
   \to \Ind (C) / \Ind (J),
\,\,\,\,\,\,
\Ind (\kp_n)
 \colon \Ind (C)
   \to \Ind (C) / \Ind (J_n),
\]
and
\[
\Ind (\pi_n)
 \colon \Ind (C) / \Ind (J_n)
    \to \Ind (C) / \Ind (J).
\]

Let $\et \colon A \to \Ind (C) / \Ind (J)$
be as in \autoref{P_Maps}.
Since $\af$ is \eqsj,
there exist $n \in \N$ and a $G$-morphism
$\ld \colon A \to \Ind (C) / \Ind (J_n)$
such that $\Ind (\pi_n) \circ \ld = \et$.
We now have the following commutative diagram,
with the horizontal maps on the right being as
in \autoref{P_Eval}:
\[
\xymatrix{
& & \Ind (C)
      \ar[d]^{\Ind (\kp_n)} \ar[rr]^(0.55){\ev_1^C}
   & & C \ar[d]^{\kp_n} \ar@/^2pc/[dd]^{\kp} \\
& & \Ind (C) / \Ind (J_n)
     \ar[d]^{\Ind (\pi_n)}
                \ar[rr]^(0.65){\ev_1^{C / J_n}}
   & & C / J_n \ar[d]^{\pi_n} \\
A \ar[rr]_-{\et} \ar[rru]^{\ld}
   & & \Ind (C) / \Ind (J)
      \ar[rr]^(0.65){\ev_1^{C / J}}
   & & C / J.
}
\]
It is easy to check that $\ev_1^{C / J} \circ \et = \ph$.
Therefore the map $\ps = \ev_1^{C / J_n} \circ \ld$
is an $H$-morphism from $A$ to $C / J_n$
such that $\pi_n \circ \ps = \ph$.
\end{proof}


\begin{cor}
\label{P_EqSj_implies_Sj}
Let $G$ be a compact group,
and let $A$ be a $G$-algebra that is \eqsj.
Then $A$ is (non\eqv{ly}) \sj.
\end{cor}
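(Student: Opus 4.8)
The plan is to deduce this immediately from \autoref{P_SjSubGp} by restricting the action to the trivial subgroup. Take $H = \{1\} \leqslant G$; since $G$ is Hausdorff, $H$ is closed, and $G / H = G$ is compact by hypothesis. Moreover, because $A$ is equivariantly semiprojective it is in particular a separable $G$-algebra in the sense of \autoref{D_EqSj}, so $A$ is a separable \ca{} and $(H, A, \af |_H)$ is a separable $H$-algebra to which \autoref{P_SjSubGp} applies. That proposition then gives that $\af |_H$ is equivariantly semiprojective.

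It remains only to observe that equivariant semiprojectivity of $\af |_{\{1\}}$ is literally the statement that $A$ is semiprojective in the usual sense: a $\{1\}$-algebra is just a \ca, a $\{1\}$-invariant ideal is just an ideal, and a $\{1\}$-morphism is just a \stHm, so the lifting condition of \autoref{D_EqSj} for $G = \{1\}$ unwinds to the standard definition of \sjy. Thus $A$ is \sj. There is no genuine obstacle here; all the substance is already contained in \autoref{P_SjSubGp}, which in turn rests on the exactness (\autoref{P_Ind_Exact}) and continuity (\autoref{P_Ind_Cts}) of the induction functor.
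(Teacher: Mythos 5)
Your proposal is correct and is exactly the paper's intended argument: the corollary is deduced from \autoref{P_SjSubGp} by restricting to the closed cocompact subgroup $H = \{1\}$ and observing that equivariant semiprojectivity over the trivial group is ordinary semiprojectivity. Nothing further is needed.
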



In \autoref{P_SjSubGp}, some condition on $G / H$ is necessary,
as the following example shows.


\begin{exa}
\label{R:SjSubGp_notCpct}
Let $A = C (S^1)$ be the universal \ca{} generated by a unitary,
and consider the free Bernoulli shift
$\ta \colon \Z \to \Aut \big( \BFP_{\Z} \ C (S^1) \big)$
of \autoref{D_2X07FBS}.
This action is \sj{} by \autoref{P_BernShift_EqSj_General},
but its restriction to the trivial subgroup is not.
\end{exa}


Thus, $\Z$ can act \sj{ly} on non\sj{} \ca{s}.
This is in contrast to the projective case,
discussed in \autoref{R:PjDiscreteSubGp}.
An analogous example can be constructed for any
infinite countable discrete group in place of~$\Z$.


\section{Equivariant projectivity of restrictions to subgroups}
\label{Sec_PjSubGp}

\indent
In this section we study the projective analog of the
question of \autoref{Sec_SjSubGp}.
Given a \pj{} action $\af \colon G \to \Aut (A)$,
we show in \autoref{P_2X14PjSubGp} that the restriction of $\af$
to a subgroup $H \leqslant G$ is also \pj{}
in considerable generality.
The condition we have to put is either a restriction on
the subgroup (namely that $H$ or $G / H$ is compact)
or that $G$ is a \SINgp, in which case $H$ can be arbitrary.
A \SINgp{} is a topological group for which
the right and left uniform structures agree.
See the paragraph before \autoref{P_LUCInd-Exact}.
We do not know if these hypotheses can be removed.

Since the trivial subgroup is compact,
it follows that every \eqpj{} \ca{}
is (non\eqv{ly}) \pj.
See \autoref{P_EqPj_implies_Pj}.
This is in contrast to the \sj{} case.
See \autoref{R:SjSubGp_notCpct}.

To obtain the results in this section,
we use a different induction functor,
which considers uniformly continuous functions; see \autoref{R_NCptInd}.
To show that this functor is exact,
we need a criterion for when uniformly continuous functions
into quotient \ca{s} can be lifted
to uniformly continuous functions.
In \autoref{P_UniformLifts},
we solve this problem in some generality,
and we think that this result might also be of independent interest.

There are several equivalent ways to define a uniform space.
We will mostly use the concept of a uniform cover to define
a uniformity on a set.
We refer to Isbell's book \cite{Isb1964}
for the theory of uniform spaces.
The basic definitions are in Chapter~I.
The definition of a uniformity is before item~6
in Chapter~I of \cite{Isb1964}.

If ${\mathcal{U}}$ and ${\mathcal{V}}$ are
covers of a space~$X$,
we write ${\mathcal{V}} \leq {\mathcal{U}}$
to mean that ${\mathcal{V}}$ refines ${\mathcal{U}}$.

\begin{dfn}\label{D_2X09UnifFMet}
Let $(X, d)$ be a metric space.
For $\ep > 0$ and $x \in X$,
define $U_{\ep} (x) = \{ y \in X \colon d (x, y) < \ep \}$.
The {\emph{basic uniform covers}} of~$X$ are the collections
\[
{\mathcal{B}} (\ep) = \{ U_{\ep} (x) \colon x \in X \}
\]
for $\ep > 0$.
A cover ${\mathcal{U}}$ of $X$ is called {\emph{uniform}}
if there exists $\ep > 0$
such that ${\mathcal{B}} (\ep) \leq {\mathcal{U}}$.
\end{dfn}

The proof of the following result is essentially contained
in items 1--3 in Chapter~I of \cite{Isb1964}.
One should note that if $(X, d)$ is a metric space,
$\ep_1, \ep_2 > 0$,
and ${\mathcal{U}_1}$ and ${\mathcal{U}}_2$ are covers of $X$
such that ${\mathcal{B}} (\ep_1) \leq {\mathcal{U}}_1$
and ${\mathcal{B}} (\ep_2) \leq {\mathcal{U}}_2$,
then ${\mathcal{B}} \big( \min (\ep_1, \ep_2) \big)$
refines both ${\mathcal{U}_1}$ and ${\mathcal{U}}_2$,
so that the collection of uniform covers in \autoref{D_2X09UnifFMet}
is downwards directed.
Uniformly continuous functions are defined
after Theorem~11 in Chapter~I of \cite{Isb1964},
and equiuniformly continuous families of functions are
defined before item~19 in Chapter III of \cite{Isb1964}.
The usual notion for functions on metric spaces
is just that a family $F$ of functions
from $(X_1, d_1)$ to $(X_2, d_2)$ is equiuniformly continuous
if for every $\ep > 0$ there is $\dt > 0$ such that
whenever $x, y \in X_1$ satisfy $d_1 (x, y) < \dt$,
then for all $f \in F$ we have $d_2 (f (x), \, f (y)) < \ep$.

\begin{prp}\label{P_2X08_MUisU}
Let $(X, d)$ be a metric space.
Then the collection of uniform covers in \autoref{D_2X09UnifFMet}
is a uniform structure on~$X$.
Moreover, for any two metric spaces $(X_1, d_1)$ and $(X_2, d_2)$,
the uniformly continuous functions
and the equiuniformly continuous families of functions
from $X_1$ to $X_2$
are the uniformly continuous functions
and the equiuniformly continuous families
as traditionally defined in terms of the metrics.
\end{prp}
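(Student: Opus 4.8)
The plan is to verify the two assertions separately, in each case unwinding Isbell's cover-theoretic definitions and reducing everything to the basic uniform covers $\mathcal{B}(\ep)$.

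For the first assertion, recall that by \autoref{D_2X09UnifFMet} a cover $\mathcal{U}$ of $X$ is uniform precisely when $\mathcal{B}(\ep) \le \mathcal{U}$ for some $\ep > 0$; thus the collection of uniform covers is by construction closed under passing to coarser covers (by transitivity of refinement), and it is downwards directed, since $\mathcal{B}\big(\min(\ep_1,\ep_2)\big)$ refines both $\mathcal{U}_1$ and $\mathcal{U}_2$ whenever $\mathcal{B}(\ep_i) \le \mathcal{U}_i$ --- this was already noted just before the statement. It therefore only remains to check the star-refinement axiom. Given a uniform cover $\mathcal{U}$ with $\mathcal{B}(\ep) \le \mathcal{U}$, I would show that $\mathcal{B}(\ep/3)$ star-refines $\mathcal{U}$; since a star-refinement of a coarser cover is still a star-refinement, it suffices to see that $\mathcal{B}(\ep/3)$ star-refines $\mathcal{B}(\ep)$, and this is the triangle-inequality estimate $\mathrm{St}\big(U_{\ep/3}(a),\,\mathcal{B}(\ep/3)\big) \subset U_{\ep}(a)$: two $(\ep/3)$-balls that meet have centres within $2\ep/3$, so any point of the second lies within $\ep$ of~$a$. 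This is exactly the content of items~1--3 in Chapter~I of \cite{Isb1964}, and no other constant verification is needed.

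For the second assertion, I would use that a map $f \colon X_1 \to X_2$ is uniformly continuous in the cover sense iff $f^{-1}(\mathcal{U}) = \{ f^{-1}(U) : U \in \mathcal{U}\}$ is a uniform cover of $X_1$ for every uniform cover $\mathcal{U}$ of $X_2$. Because $f^{-1}$ preserves refinement and the uniform covers are closed under coarsening, it is enough to test this condition on the covers $\mathcal{B}(\ep)$ of $X_2$; and $f^{-1}(\mathcal{B}(\ep))$ is uniform exactly when there is $\dt > 0$ with $\mathcal{B}(\dt) \le f^{-1}(\mathcal{B}(\ep))$. Unwinding this refinement and using the triangle inequality (with a harmless loss of a factor~$2$) turns it into the classical statement that $d_1(x,y) < \dt$ implies $d_2(f(x),f(y)) < \ep$; conversely the classical $\ep$--$\dt$ condition directly gives $U_{\dt}(x) \subset f^{-1}\big(U_{\ep}(f(x))\big)$, i.e.\ $\mathcal{B}(\dt) \le f^{-1}(\mathcal{B}(\ep))$. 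The statement about equiuniformly continuous families is obtained by running the same argument uniformly over $f \in F$: in the cover-theoretic formulation, $F$ is equiuniformly continuous iff for every uniform cover $\mathcal{U}$ of $X_2$ there is a single uniform cover $\mathcal{V}$ of $X_1$ with $\mathcal{V} \le f^{-1}(\mathcal{U})$ for all $f \in F$, and choosing $\mathcal{V}$ of the form $\mathcal{B}(\dt)$ makes this equivalent to the traditional condition that for every $\ep > 0$ there is $\dt > 0$ with $d_1(x,y) < \dt \Rightarrow d_2(f(x),f(y)) < \ep$ for all $f \in F$.

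I do not expect any genuine obstacle; the proof is bookkeeping. The only points requiring a little care are getting the constant right in the star-refinement step, and writing down explicitly the translation between Isbell's cover-level definitions of (equi)uniform continuity and the metric $\ep$--$\dt$ definitions, after which all the estimates are immediate consequences of the triangle inequality.
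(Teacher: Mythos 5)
Your proposal is correct and follows essentially the same route as the paper, which gives no detailed argument but simply observes that everything reduces to the basic covers ${\mathcal{B}} (\ep)$ and defers the verification to items 1--3 in Chapter~I of \cite{Isb1964}; your $\ep/3$ star-refinement estimate, the $\min (\ep_1, \ep_2)$ directedness observation (stated explicitly in the paper just before the proposition), and the cover-to-metric translation of (equi)uniform continuity with its harmless factor of~$2$ are exactly the bookkeeping being invoked there.
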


The following theorem is the key result.
We warn that the term ``subordinate''
is used in \cite{Isb1964} with a meaning inconsistent
with its standard meaning
in the context of ordinary partitions of unity.

\begin{thm}[Theorem~11 in Chapter~IV of \cite{Isb1964}]
\label{T_2X09UPartU}
Let $X$ be a uniform space and let ${\mathcal{U}}$
be a uniform cover of~$X$.
Then there is an equiuniformly continuous
(but not necessarily locally finite) partition of unity
$( h_{U} )_{U \in {\mathcal{U}}}$
such that $h_U (x) = 0$
for all $U \in {\mathcal{U}}$ and $x \in X \setminus U$.
\end{thm}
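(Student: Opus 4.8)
The plan is to reduce the statement to the case of a metric space, where an explicit construction is available. First I would recall that by the definition of a uniformity, the cover $\mathcal{U}$ has a sequence of successive star refinements $\mathcal{U} = \mathcal{U}_0 \geq \mathcal{U}_1 \geq \mathcal{U}_2 \geq \cdots$ of uniform covers, each a star refinement of the previous one. By a standard metrization lemma for uniformities (this is the construction used in the proof of the Alexandroff--Urysohn metrization theorem), from such a sequence one builds a pseudometric $d$ on $X$ which is uniformly continuous on $X \times X$ (with the product uniformity) and which satisfies: each basic $d$-ball $U_{2^{-n}}(x)$ is contained in some member of $\mathcal{U}_n$, and each member of $\mathcal{U}_{n}$ contains some $d$-ball $U_{2^{-n}}(x)$. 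In particular the identity map $X \to (X,d)$ is uniformly continuous, and $\mathcal{B}(\ep)$ refines $\mathcal{U}$ for $\ep$ small enough; so it suffices to produce an equiuniformly continuous partition of unity subordinate (in Isbell's sense) to a basic uniform cover $\mathcal{B}(\ep)$ of the pseudometric space $(X,d)$ and then pull back.

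Next, working in $(X,d)$ with $\ep>0$ fixed, I would write down an explicit partition of unity. Enumerate the cover $\mathcal{B}(\ep) = \{ U_\ep(x) \colon x \in X\}$, indexed by the points $x \in X$. For each $x$ define
\[
g_x(y) = \max\bigl(0, \; \tfrac{\ep}{2} - d(x,y)\bigr)
\]
for $y \in X$. Then $g_x(y) = 0$ whenever $y \notin U_\ep(x)$ (indeed whenever $d(x,y) \geq \ep/2$), each $g_x$ is $1$-Lipschitz hence the family $(g_x)_{x}$ is equiuniformly continuous, and $s(y) := \sum_x g_x(y) \geq \ep/2 > 0$ for every $y$, since $g_y(y) = \ep/2$. (The sum need not be locally finite — and this is precisely why the theorem only claims equiuniform, not locally finite, partitions of unity.) I would then verify that $s$ is itself $1$-Lipschitz, using that it is a supremum-stable combination; more carefully, $|s(y)-s(z)| \leq \sup_x |g_x(y)-g_x(z)| \leq d(y,z)$ after checking the interchange of sum and this bound, which holds because only finitely many... actually the cleanest route is: $s(y) = \sup\{ \sum_{x \in F} g_x(y) : F \subset X \text{ finite}\}$ is a supremum of $1$-Lipschitz functions with a common modulus, hence $1$-Lipschitz, and $s \geq \ep/2$. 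Finally set $h_x = g_x / s$. Since $s$ is bounded below by $\ep/2$ and $s, g_x$ are uniformly continuous with a common modulus, the quotients $h_x$ form an equiuniformly continuous family; they sum to $1$ pointwise; and $h_x(y) = 0$ for $y \notin U_\ep(x)$.

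The last step is the pullback. Composing the pseudometric partition $(h_x)_x$ on $(X,d)$ with the uniformly continuous identity $X \to (X,d)$ gives an equiuniformly continuous partition of unity on the uniform space $X$. It is indexed by points $x$; I would reindex it by members of $\mathcal{U}$ by choosing, for each $x$ with $U_\ep(x)$ nonempty, some $W(x) \in \mathcal{U}$ containing $U_\ep(x)$ (possible once $\ep$ is chosen so that $\mathcal{B}(\ep) \leq \mathcal{U}$), and then, for $W \in \mathcal{U}$, setting $h_W = \sum_{x : W(x) = W} h_x$ (the empty sum being $0$); the support condition is preserved, equiuniform continuity of the regrouped family follows from that of the original since it is again a pointwise sum of a subfamily, and the $h_W$ still sum to $1$. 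The main obstacle is the first step — producing the uniformly continuous pseudometric from the sequence of star refinements and checking that its balls are sandwiched between the $\mathcal{U}_n$ — which is the technical heart; the partition-of-unity construction itself is then the routine Lipschitz-function computation sketched above, and the reindexing is bookkeeping. Since this is Theorem~11 in Chapter~IV of \cite{Isb1964}, I would in the write-up simply cite Isbell for the pseudometrization lemma and give the explicit partition of unity for completeness.
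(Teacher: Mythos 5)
The paper does not prove this statement at all---it is quoted verbatim from Isbell and used as a black box---so the only question is whether your sketch is a correct proof, and it is not: the normalization step fails. After the (correct, standard) pseudometric reduction, you set $s(y)=\sum_{x}g_x(y)$ with $g_x(y)=\max\bigl(0,\tfrac{\ep}{2}-d(x,y)\bigr)$, the sum running over \emph{all} centers $x$ of the cover $\mathcal{B}(\ep)$. At a fixed $y$ every $x$ with $d(x,y)<\ep/2$ contributes a strictly positive term, and there are typically uncountably many such $x$; already for $X=[0,1]$ with its usual metric one gets $s\equiv+\infty$, so $h_x=g_x/s$ is $0$ (or undefined) and nothing sums to $1$. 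The two claims you make about $s$ are also unjustified: $s$ is a sum, not a supremum, so it is not ``a supremum of $1$-Lipschitz functions with a common modulus'' (a sum of $N$ $1$-Lipschitz functions is only $N$-Lipschitz, and the finite partial sums have no common modulus), and the bound $|s(y)-s(z)|\leq\sup_x|g_x(y)-g_x(z)|$ is simply false for sums. Your parenthetical remark that the family ``need not be locally finite, which is why the theorem only claims equiuniform continuity'' misses that pointwise convergence of $\sum_U h_U$ to $1$ is still part of being a partition of unity, and it is exactly this convergence that your construction destroys.

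The deeper point is that this cannot be patched by choosing a better set of centers: passing to a maximal $\ep/4$-separated net still leaves infinitely many overlapping bumps at a point (take $y=0$ and $x_i=(\ep/3)e_i$ in an infinite-dimensional Hilbert space), and one cannot in general pass to a point-finite or finite-order uniform refinement---that is precisely the ``uniformly finitistic'' property of \autoref{D_2X09_LUD}, which the paper stresses can fail (Isbell's Example~2.4, cited after that definition). Your construction is essentially the one the paper itself carries out in the proof of \autoref{P_UniformLifts}, where it is legitimate because the cover there is assumed to have finite order $n$ and all estimates carry factors of $n$; for an arbitrary uniform cover the sum normalization is exactly the obstruction, and Isbell's actual proof of Theorem~IV.11 uses a genuinely different argument. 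So as written the sketch has a real gap at its central step; the honest options are to cite Isbell for the whole statement (as the paper does) or to reproduce his argument, not the Lipschitz-bump computation.
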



We recall the following standard definition.


\begin{dfn}\label{D_2X09_OrdCov}
Let $X$ be a set,
and let ${\mathcal{U}}$ be a cover of~$X$.
The {\emph{order}} of ${\mathcal{U}}$,
denoted $\ord ({\mathcal{U}})$,
is the least number $n \in \Nz$
such that whenever $U_0, U_1, \ldots, U_n \in {\mathcal{U}}$
are distinct, then $U_0 \cap U_1 \cap \cdots \cap U_n = \varnothing$.
We take $\ord ({\mathcal{U}}) = \infty$ if no such~$n$ exists.
\end{dfn}


Equivalently,
$\ord ({\mathcal{U}})$
is the largest number $n$ such that there are $n$ distinct elements
of ${\mathcal{U}}$ which have nonempty intersection.
We warn the reader that some authors
use a different convention,
in which what we defined above is order $n - 1$.
For example, see page~111 of \cite{Pea1975}.
We are following the convention
implicitly used in our reference~\cite{SegSpiGue1993}.

The first part of the following definition
is found at the very beginning of Chapter~V of \cite{Isb1964},
where the term ``large dimension'' is used.
The second part is Definition~1.7 of \cite{SegSpiGue1993}.


\begin{dfn}\label{D_2X09_LUD}
Let $X$ be a uniform space.
Then the {\emph{large uniform dimension of $X$}},
denoted $\udim (X)$,
is the least $n \in \{ -1, 0, 1, 2, \ldots, \infty \}$
such that for every uniform open cover ${\mathcal{U}}$ of~$X$
there is a uniform open cover ${\mathcal{V}}$ of $X$
of order at most $n + 1$ which refines ${\mathcal{U}}$.
(We take $\udim (\varnothing) = - 1$.)

We say that $X$ is {\emph{uniformly finitistic}}
if for every uniform open cover ${\mathcal{U}}$ of~$X$
there is a uniform open cover ${\mathcal{V}}$ of $X$
of finite order which refines ${\mathcal{U}}$.
\end{dfn}


An equivalent condition for being uniformly finitistic
is that there exists a base for the uniformity
consisting of uniform covers of finite order.

If a uniform space $X$ is locally compact and paracompact
(in the induced topology),
then its covering dimension
is bounded by its large uniform dimension, that is,
$\dim(X) \leq \udim(X)$.
To see this,
let $\locdim(X)$ be the local covering dimension of~$X$
(Definition 5.1.1 of \cite{Pea1975}).
Proposition 5.3.4 of \cite{Pea1975}
gives $\dim(X) = \locdim(X)$.
For a locally compact Hausdorff space~$X$,
let $X^{+}$ denote the one point compactification of~$X$.
It is a standard result that $\locdim (X) = \dim(X^{+})$;
for instance, this is easily deduced
from Propositions 3.5.6, 5.2.1, 5.2.2, and 5.3.4 of \cite{Pea1975}.
It follows from Theorems V.5 and VI.2 of \cite{Isb1964}
that for every compactification $\gm X$ of~$X$
we have $\dim(\gm X) \leq \udim(X)$.
Thus, if $X$ is locally compact and paracompact,
we may combine these results to obtain
\[
\dim(X) = \locdim(X) = \dim(X^{+}) \leq \udim(X),
\]
as desired.

The concept of being finitistic was first defined for
topological spaces, where it means that every open cover
can be refined by an open cover of finite order.
This definition is implicit in \cite{Swa1959},
although the term ``finitistic''
was only later introduced by Bredon
on page~$133$ of his book \cite{Bre1972}.

In general, for a uniform space there is no connection
between being finitistic and uniformly finitistic.
Example~(d) after Definition~1.7 of \cite{SegSpiGue1993}
gives a uniformly finitistic space which is not finitistic.
Example~2.4 of \cite{Isb1959}
gives a discrete uniform space,
hence obviously finitistic,
with a uniform open cover having no uniform open refinement of
finite order,
thus not uniformly finitistic.


\begin{ntn}\label{N_2X09_CbX}
Let $X$ be a topological space and let $A$ be a \ca.
We denote by $\Cb (X, A)$ the \ca{}  of all bounded continuous
functions from $X$ to~$A$,
with the supremum norm.
If $X$ is a uniform space,
we let $\Cu (X, A) \subset \Cb (X, A)$
denote the subset
consisting of all bounded uniformly continuous
functions from $X$ to~$A$.
\end{ntn}


\begin{prp}\label{P_2X13CuCst}
Let $X$ be a uniform space and let $A$ be a \ca.
Then $\Cu (X, A)$ is a \ca.
\end{prp}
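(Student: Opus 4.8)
The plan is to show that $\Cu(X,A)$ is a closed $*$-subalgebra of the $C^*$-algebra $\Cb(X,A)$ (which is a $C^*$-algebra by \autoref{N_2X09_CbX} and standard facts), so that it inherits the $C^*$-structure. Since $\Cb(X,A)$ is already known to be a \ca{}, it suffices to check that $\Cu(X,A)$ is a norm-closed subset that is closed under the algebraic operations: sums, products, scalar multiples, and the involution $f \mapsto f^*$.

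First I would record that $\Cu(X,A)$ is a linear subspace and is closed under involution; this is routine, using that $\|f^*(x) - f^*(y)\| = \|(f(x) - f(y))^*\| = \|f(x) - f(y)\|$ for the involution, and the triangle inequality for sums. For the product, the subtlety is that uniform continuity is not preserved by multiplication of unbounded functions in general, but here all functions are \emph{bounded}: if $f, g \in \Cu(X,A)$ with $\|f\|, \|g\| \leq M$, then from $fg(x) - fg(y) = f(x)(g(x) - g(y)) + (f(x) - f(y))g(y)$ and the estimate $\|fg(x) - fg(y)\| \leq M\|g(x) - g(y)\| + M\|f(x) - f(y)\|$, one sees that given $\ep > 0$, a uniform cover witnessing that both $f$ and $g$ vary by less than $\ep/(2M)$ on each of its members also witnesses that $fg$ varies by less than $\ep$; since the collection of uniform covers is downwards directed (as noted before \autoref{P_2X08_MUisU} in the metric case, and in general by the definition of a uniformity), such a common refinement exists. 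So $\Cu(X,A)$ is a $*$-subalgebra.

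The one step requiring a little care is \emph{norm-closedness}. Suppose $f_n \in \Cu(X,A)$ and $f_n \to f$ uniformly, i.e.\ $\|f_n - f\| \to 0$ in $\Cb(X,A)$. Given a uniform cover ${\mathcal{U}}$ of $X$ — more precisely, given $\ep > 0$, I want a uniform cover on each member of which $f$ varies by at most $\ep$. Choose $n$ with $\|f_n - f\| < \ep/3$, and then a uniform cover ${\mathcal{V}}$ such that for each $V \in {\mathcal{V}}$ and $x, y \in V$ we have $\|f_n(x) - f_n(y)\| < \ep/3$. Then for $x, y \in V$,
\[
\|f(x) - f(y)\| \leq \|f(x) - f_n(x)\| + \|f_n(x) - f_n(y)\| + \|f_n(y) - f(y)\| < \ep.
\]
This is the standard $\ep/3$-argument, and it shows $f \in \Cu(X,A)$. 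I expect this closedness step to be the main (though still mild) obstacle, since it is where one must phrase the classical ``uniform limit of uniformly continuous functions is uniformly continuous'' argument in the uniform-cover language of \cite{Isb1964} rather than with $\dt$'s. Having verified that $\Cu(X,A)$ is a norm-closed $*$-subalgebra of the \ca{} $\Cb(X,A)$, it is itself a \ca, which completes the proof.
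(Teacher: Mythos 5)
Your argument is correct and follows the same route as the paper: exhibit $\Cu(X,A)$ as a norm-closed ${}^*$-subalgebra of $\Cb(X,A)$, with boundedness handling the product. The only difference is that where you give the $\ep/3$-argument for norm-closedness directly, the paper simply cites Corollary~32 in Chapter~III of \cite{Isb1964} (uniform limits of uniformly continuous maps are uniformly continuous), so your write-up just fills in that citation.
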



\begin{proof}
It is easy to check that $\Cu (X, A)$ is closed under
the algebraic operations.
That it is norm closed in $\Cb (X, A)$
follows from Corollary~32 in Chapter III of \cite{Isb1964}.
\end{proof}


The following theorem is in some sense a dual version of
Theorem~1 of \cite{Vid1969},
on the problem of extending uniformly continuous maps
from subspaces.
We do not know whether it is necessary that $X$ be uniformly finitistic.
Its proof serves as a simpler model for the proof
of \autoref{P_LUCInd-Exact}.


\begin{thm}
\label{P_UniformLifts}
Let $\pi \colon A \to B$ be
a surjective \stHm{} between two \ca s,
and let $X$ be a uniformly finitistic space.
Then the induced \stHm{}
$\kp \colon \Cu (X, A) \to \Cu (X, B)$ is surjective.
\end{thm}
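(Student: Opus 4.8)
The plan is to lift a given $g \in \Cu(X, B)$ to some $f \in \Cu(X, A)$ by an iterative approximation scheme, in the spirit of the usual proof that $C^*$-algebra surjections induce surjections on bounded continuous function algebras, but keeping careful track of uniform continuity at every stage. The key point is that at each step, the ``local lift'' of an approximation will be patched together using an \emph{equiuniformly continuous} partition of unity subordinate to a uniform cover of \emph{finite order}; the finite-order hypothesis (uniform finitisticness of $X$) is what keeps the patched function uniformly continuous, and the finiteness of the order is also what controls the norm estimates in the patching.

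**Main steps.** First I would reduce to the following approximation statement: given $g \in \Cu(X, B)$ with $\|g\| \le 1$ (say), and given any $f_0 \in \Cu(X, A)$ with $\|\pi \circ f_0 - g\| < \delta$, there exists $f_1 \in \Cu(X, A)$ with $\|\pi \circ f_1 - g\| < \delta/2$ and $\|f_1 - f_0\|$ controlled by a fixed multiple of $\delta$ (independent of everything). Iterating this with $\delta, \delta/2, \delta/4, \dots$ and summing the corrections yields a Cauchy sequence in $\Cu(X, A)$ (using \autoref{P_2X13CuCst} to know the limit stays in $\Cu(X,A)$) whose limit $f$ satisfies $\pi \circ f = g$; a base case $f_0 = 0$ starts the induction. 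To prove the approximation statement: set $h = g - \pi \circ f_0 \in \Cu(X, B)$, so $\|h\| < \delta$. For each $x \in X$ pick $a_x \in A$ with $\pi(a_x) = h(x)$ and $\|a_x\|$ close to $\|h(x)\|$; by uniform continuity of $h$ and continuity of $\pi$, there is a uniform open cover on whose members $x \mapsto a_x$ is approximately constant in the quotient, and we may refine it (by uniform finitisticness) to a uniform open cover $\mathcal V$ of finite order, say $\ord(\mathcal V) \le N$, with a chosen lift $b_V \in A$ for each $V \in \mathcal V$ such that $\|\pi(b_V) - h(y)\|$ is small for $y \in V$. Apply \autoref{T_2X09UPartU} to get an equiuniformly continuous partition of unity $(\chi_V)_{V \in \mathcal V}$ with $\chi_V$ supported in $V$, and set $f_1 = f_0 + \sum_{V} \chi_V \cdot b_V$. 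The sum is locally a sum of at most $N$ terms, so it converges and defines a bounded function; uniform continuity of $f_1$ follows from equiuniform continuity of the $\chi_V$ together with the order bound (a standard estimate: near any point only finitely many, at most $N$, summands are nonzero, and on a small enough uniform neighborhood the relevant $\chi_V$ vary little while the $b_V$ are fixed scalars-times-elements). Finally one checks $\|\pi \circ f_1 - g\| = \|\sum_V \chi_V (\pi(b_V) - h)\|$ is small and $\|f_1 - f_0\| = \|\sum_V \chi_V b_V\| \le \sup_V \|b_V\|$ is $O(\delta)$.

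**Main obstacle.** The delicate point is the uniform-continuity estimate for the patched function $f_1 = f_0 + \sum_V \chi_V b_V$: one must show that a possibly infinite sum of products of equiuniformly continuous scalar functions with fixed elements of $A$ is again uniformly continuous, and this genuinely uses that the cover has finite order. The argument is: given $\varepsilon > 0$, use equiuniform continuity of $(\chi_V)$ to get a uniform cover $\mathcal W$ so fine that on any $W \in \mathcal W$ each $\chi_V$ oscillates by less than $\varepsilon / (N \sup_V\|b_V\|)$; then for $x, y$ in a common $W$, only the at most $N$ indices $V$ with $W \cap V \ne \varnothing$ contribute, and $\|f_1(x) - f_1(y)\| \le \|f_0(x)-f_0(y)\| + \sum_{V}\,|\chi_V(x) - \chi_V(y)|\,\|b_V\| < \|f_0(x)-f_0(y)\| + \varepsilon$. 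I would spell this estimate out carefully, since it is the crux, and note that it is exactly the mechanism that will be reused, with the induced action in place of the direct scalar patching, in the proof of \autoref{P_LUCInd-Exact}. The remaining verifications (that $b_V$ can be chosen with uniformly bounded norm, that the quotient estimate holds, that the iteration converges) are routine given \autoref{P_2X13CuCst} and the elementary fact that $\pi$ is norm-decreasing with isometric quotient behavior on cosets.
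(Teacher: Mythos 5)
Your proposal is correct and is essentially the paper's own argument: the same finite-order uniform refinement (uniform finitisticness) combined with an equiuniformly continuous partition of unity from \autoref{T_2X09UPartU}, patching constant lifts $b_V$ and using the order bound to get both boundedness and uniform continuity of the patched function. The only differences are that the paper skips your iterative correction scheme entirely --- since the image of a \stHm{} is closed, it suffices to show $\kp$ has dense range, so one $\ep$-approximation step already finishes the proof --- and a small imprecision in your oscillation estimate: the finite order bounds the number of sets \emph{containing a given point}, so the contributing indices for $x, y$ are the at most $2N$ sets containing $x$ or $y$ (not the sets meeting $W$, which need not be at most $N$), which only changes a constant.
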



\begin{proof}
It is enough to show that $\kp$~has dense range.

Given $b \in \Cu (X, B)$ and $\ep > 0$,
we will construct $a \in \Cu (X, A)$ such that
$\| \pi \circ a - b \| < \ep$.
We may clearly assume $b \neq 0$.
Let ${\mathcal{U}}$ be a uniform cover of~$X$
such that
whenever $U \in {\mathcal{U}}$ and $x, y \in U$,
then $\| b (x) - b (y) \| < \frac{\ep}{2}$.
Since $X$ is uniformly finitistic,
we may assume ${\mathcal{U}}$ has finite order.
Set $n = \ord ({\mathcal{U}})$.

Let $( h_{U} )_{U \in {\mathcal{U}}}$
be an equiuniformly continuous partition of unity
for ${\mathcal{U}}$ as in \autoref{T_2X09UPartU}.
Equiuniform continuity
in our situation means that
for every $\rh > 0$
there exists a uniform open cover ${\mathcal{V}}$ of~$X$
such that whenever $V \in {\mathcal{V}}$
and $x, y \in V$,
then for all $U \in {\mathcal{U}}$
we have $| h_U (x) - h_U (y) | < \rh$.

For each $U \in {\mathcal{U}}$ choose a point $x_U \in U$,
and let $a_U \in A$ be a lift of $b (x_U)$ with
$\| a_U \| = \| b (x_U) \|$.
For $x \in X$,
there are at most $n$ sets $U \in {\mathcal{U}}$
such that $x \in U$,
and $h_U (x)$ can be nonzero only for these sets.
Therefore the sum in the following
definition of a function $a \colon X \to A$ is finite
at each point:
\[
a (x) = \sum_{U \in {\mathcal{U}}} h_U (x) \cdot a_U
\]
for $x \in X$.
Since $\sum_{U \in {\mathcal{U}}} h_U (x) = 1$,
it further follows that $\| a \| \leq \| b \|$,
so that $a$ is bounded.

We claim that $a$ is uniformly continuous.
We follow an argument in the proof of Theorem~1 of \cite{Vid1969}.
Let $\rh > 0$.
We must find a uniform open cover ${\mathcal{V}}$ of~$X$
such that whenever $V \in {\mathcal{V}}$
and $x, y \in V$, we have $\| a (x) - a (y) \| < \rh$.
Set $\rh_0 = \rh / ( 2 n \| b \|)$.
Let ${\mathcal{V}}$ be a uniform open cover
which witnesses equiuniform continuity
of $( h_{U} )_{U \in {\mathcal{U}}}$
as above,
but with $\rh_0$ in place of $\rh$.
Let $V \in {\mathcal{V}}$ and let $x, y \in V$.
Set
\[
{\mathcal{U}}_0
  = \big\{ U \in {\mathcal{U}} \colon
   {\mbox{$x \in U$ or $y \in U$}} \big\}.
\]
Then $\card ( {\mathcal{U}}_0 ) \leq 2 n$.
Therefore
\begin{align*}
\| a (x) - a (y) \|
 & = \left\| \sum_{U \in {\mathcal{U}}_0} \big( h_U (x) - h_U (y) \big)
             \cdot a_U \right\| \\
 & \leq 2 n \cdot \| b \| \cdot
       \max_{U \in {\mathcal{U}}_0} | h_U (x) - h_U (y) |
   < 2 n \| b \| \rh_0
   = \rh.
\end{align*}
The claim is proved.

It remains to prove that $\| \pi \circ a - b \| < \ep$.
Let $x \in X$.
Then $\| \pi (a_U) - b (x) \| < \frac{\ep}{2}$ whenever
$h_U (x) \neq 0$.
Therefore
\[
\| (\pi \circ a) (x) - b (x) \|
  = \left\| \sum_{U \in {\mathcal{U}}}
          h_U (x) \big( \pi (a_U) - b (x) \big) \right\|
  \leq \sum_{U \in {\mathcal{U}}}
        h_U (x) \left\| \pi (a_U) - b (x) \right\|
  < \frac{\ep}{2}.
\]
So $\| \pi \circ a - b \| \leq \frac{\ep}{2} < \ep$,
as desired.
\end{proof}


\begin{rmk}\label{R_2X09_CStarSel}
The proof of \autoref{P_UniformLifts} can easily be adapted
to the case of bounded continuous maps.
More precisely, if $\pi \colon A \to B$ is
a surjective \stHm{} of \ca s,
and $X$ is a paracompact space,
then the method of proof shows that the induced \stHm{}
$\Cb (X, A) \to \Cb (X, B)$ is surjective.
This is a \ca{ic} version of the Bartle-Graves Selection Theorem,
Theorem~4 of \cite{BarGra1952},
which treats the case in which $A$ and $B$ are arbitrary Banach spaces.
The \ca ic version is much easier to prove
since the image of a \stHm{} is always closed.

Since a \ca{} is paracompact,
one may also formulate the theorem as follows.
Let $\pi \colon A \to B$ be a surjective \stHm{}
between \ca s.
Then there exists a \ct{} function
$\sm \colon B \to A$ (not necessarily linear)
such that $\pi \circ \sm = \id_B$ (that is, $\sm$ is a section),
and such that there is a constant $M$ such that
$\| \sm (a) \| \leq M \cdot \| a \|$
for all $a \in A$.
This also appears in Theorem~2 of~\cite{Lor1997b}.
To get this statement from the surjectivity
of $\Cb (X, A) \to \Cb (X, B)$,
take $X = \{ b \in B \colon \| b \| = 1 \}$,
lift the function $f (b) = b$ in $\Cb (X, B)$
to a bounded function $g \colon X \to A$,
and take $\sm (b) = \| b \| \cdot g \big( \| b \|^{-1} b \big)$
for $b \in B \setminus \{ 0 \}$ and $g (0) = 0$.
\end{rmk}


\begin{dfn}\label{D_2X10_LeftU}
Let $G$ be a \lcg, and let $H \leqslant G$ be a closed subgroup.
Let $q \colon G \to G / H$ be the quotient map.
For a nonempty open subset $U \subset G$ with $1 \in U$,
define ${\mathcal{B}}_{G, H} (U) = \{ q (U s) \colon s \in G \}$,
the open cover of $G / H$ by the
images in $G / H$ of the right translates of~$U$.
Define the {\emph{right uniformity}} on $G / H$
to consist of all open covers ${\mathcal{U}}$ of~$G / H$
such that there is a nonempty open subset $U \subset G$ with $1 \in U$
for which ${\mathcal{B}}_{G, H} (U) \leq {\mathcal{U}}$,
and call such covers the {\emph{right uniform covers}}.

We define the {\emph{left uniformity}} on $G$
and {\emph{left uniform covers}} of $G$ analogously,
using the covers by the left translates $\{ s U \colon s \in G \}$
for nonempty open subsets $U \subset G$ with $1 \in U$.
\end{dfn}

We do not define a left uniformity on $G / H$ since the
images in $G / H$ of the left uniform covers in $G$ will in general
not define a uniformity.


Taking $H = \{ 1 \}$,
we see that
the inversion map $s \mapsto s^{- 1}$
is uniformly continuous
if and only if the right and left uniformities on~$G$ agree.
However, for fixed $t \in G$,
both the left translation map $s \mapsto t s$
and the right translation map $s \mapsto s t$ are
uniformly continuous in the right uniformity
(and also in the left uniformity).

Uniform structures on topological groups are discussed
on pages 20--22 of \cite{HewRos1979},
but from the point of view of neighborhoods of the diagonal
rather than uniform open covers.

Clearly the map $q \colon G \to G / H$ is uniformly continuous
when both spaces are given the right uniformity.
In fact,
the right uniformity on $G / H$
is the quotient uniformity,
as defined before item~5 in Chapter~II of \cite{Isb1964},
of the right uniformity on~$G$.
We do not need this fact, so we omit the proof.


Let $G$ be a metrizable topological group.
Then $G$ has a left invariant metric determining its topology,
by Theorem 1.22 of \cite{MONZIP1955},
and analogously it also has a right invariant metric.
It is easy to check that the uniformity
induced by any right invariant metric
(as in \autoref{P_2X08_MUisU})
is equal to the right uniformity of \autoref{D_2X10_LeftU}.


Given a \lcg{} $G$ and a closed subgroup $H$,
it is shown in Lemma~2 of \cite{Ank1989}
that every (left) uniform cover of $H \setminus G$
can be refined by a cover of finite order.
In the following result we adapt the proof
to ensure that the refining cover is uniform.
We formulate the result for the space of right cosets.


\begin{prp}\label{P_CosetSp_LeftU}
Let $G$ be a \lcg, and let $H \leqslant G$ be a closed subgroup.
Then $G / H$ is uniformly finitistic
(with respect to the right uniformity).
\end{prp}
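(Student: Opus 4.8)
The plan is to run the ``separated set'' / packing argument behind Lemma~2 of \cite{Ank1989}, but to arrange the refining cover so that it is manifestly a right uniform cover. Write $q \colon G \to G/H$ for the (open) quotient map. Since every right uniform cover of $G/H$ is refined by some $\mathcal{B}_{G, H} (U)$ of \autoref{D_2X10_LeftU} with $U$ a symmetric open neighbourhood of~$1$ and $\overline{U}$ compact, it suffices to produce, for each such~$U$, a right uniform open refinement of $\mathcal{B}_{G, H} (U)$ of finite order. First I would use joint continuity of multiplication to pick a symmetric open neighbourhood~$V$ of~$1$ with $\overline{V}$ compact and $V^{3} \subseteq U$, and then, by Zorn's Lemma, a subset $S \subseteq G$ maximal with respect to the property that the sets $\{ q(Vs) \colon s \in S \}$ (equivalently, the sets $VsH \subseteq G$) are pairwise disjoint. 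The candidate cover is $\mathcal{V} = \{ q(V^{3} s) \colon s \in S \}$.

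The easy half is to check that $\mathcal{V}$ works as a cover. For any $g \in G$, maximality of~$S$ gives $s \in S$ with $q(Vg) \cap q(Vs) \neq \varnothing$, and a short computation using that $V$ is symmetric yields $Vg \subseteq V^{3} s H$, hence $gH \in q(Vg) \subseteq q(V^{3} s)$. This simultaneously shows that $\mathcal{V}$ covers $G/H$, that $\mathcal{B}_{G, H} (V) \leq \mathcal{V}$ --- so that $\mathcal{V}$ is a right uniform open cover, being refined by a basic one --- and, since $V^{3} \subseteq U$, that $\mathcal{V}$ refines $\mathcal{B}_{G, H} (U)$ and hence the original cover.

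The substantive step, and the one I expect to be the main obstacle, is bounding $\ord(\mathcal{V})$ independently of everything. Fix $x \in G$. One checks that $xH \in q(V^{3} s)$ exactly when $s \in V^{3} x H$, so it is enough to bound $\card \big( S \cap V^{3} x H \big)$ uniformly in~$x$. For such an~$s$, writing $s = v x h$ with $v \in V^{3}$ and $h \in H$ gives $V s h^{-1} = V v x \subseteq V^{4} x$, so that $VsH$ contains a right translate $Vvx$ of~$V$ lying inside the \emph{relatively compact} set $V^{4} x$. Thus $\{ VsH \cap V^{4} x : s \in S \cap V^{3} x H \}$ is a pairwise disjoint family of open subsets of $V^{4} x$, each of Haar measure at least $\delta_{0}\, \Delta(x)\, \mu(V)$, where $\Delta$ is the modular function of~$G$ (normalized so that $\mu(Eg) = \Delta(g)\mu(E)$) and $\delta_{0} = \min_{\overline{V^{3}}} \Delta > 0$; since $\mu(V^{4} x) = \Delta(x)\, \mu(V^{4})$ with $0 < \mu(V) \leq \mu(V^{4}) < \infty$, the factor $\Delta(x)$ cancels and
\[
\card \big( S \cap V^{3} x H \big)
  \leq \frac{\mu(V^{4} x)}{\delta_{0}\, \Delta(x)\, \mu(V)}
  = \frac{\mu(V^{4})}{\delta_{0}\, \mu(V)},
\]
a bound independent of~$x$. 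Hence $\ord(\mathcal{V}) < \infty$, and $\mathcal{V}$ is the desired cover.

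The delicate point is exactly this cancellation: one must package the disjoint ``windows'' so that they all sit inside a single relatively compact set $V^{4} x$ and so that the ratio of the ambient measure to the individual measures is insensitive to~$x$ (even when $G$ is not unimodular). This is why it suffices to work with the Haar measure of~$G$ rather than with a quasi-invariant measure on $G/H$, and it is the heart of Ankersmit's estimate; the only genuinely new observation is that running the construction through the separated set~$S$ produces a cover refined by $\mathcal{B}_{G, H} (V)$, so that finite order comes packaged together with right uniformity.
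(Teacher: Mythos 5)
Your proposal is correct and follows essentially the same route as the paper: a maximal set $S$ with the $q(Vs)$ pairwise disjoint, the cover $\{q(V^3 s)\}$, and the packing estimate from Lemma~2 of \cite{Ank1989} to bound the order. The only real difference is bookkeeping: the paper works with a \emph{right} Haar measure, under which all the disjoint right translates of $V$ have the same measure, so the modular function $\Delta$ and your cancellation of the factor $\Delta(x)$ never enter; your left-Haar version with $\delta_0 = \min_{\overline{V^3}}\Delta$ is also valid, just slightly heavier.
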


\begin{proof}
Let $\mu$ be a right Haar measure on~$G$.
We will use the notation from \autoref{D_2X10_LeftU}.
In particular, we let $q \colon G \to G / H$ be the quotient map.
The basic uniform covers of $G / H$ are defined to be
${\mathcal{B}}_{G, H} (U) = \{ q (U s) \colon s \in G \}$
for open neighborhoods $U$ of $1 \in G$.
Given such a set $U$,
we will construct a uniform cover $\mathcal{W}$ of $G / H$
that refines ${\mathcal{B}}_{G, H} (U)$
and that has finite order.

First, without loss of generality,
we may assume that $U$ has compact closure in $G$
and that $U = U^{-1}$.
Let $W$ be an open neighborhood of $1 \in G$
such that $W^3 \subset U$ and such that $W = W^{-1}$.

As in the proof of Lemma~2 in \cite{Ank1989},
we let $X$ be a maximal subset of $G$
such that the sets $q(Wx)$ for $x \in X$ are pairwise disjoint.
Set $\mathcal{W} = \{ q(W^3x) \colon x \in X \}$.
We show that it has the desired properties.

We claim that
$\mathcal{W}$ is refined by ${\mathcal{B}}_{G, H} (W)$.
To prove the claim let $g \in G$ be given.
By maximality of $X$, there exists $x \in X$
such that $q (Wg)$ and $q (Wx)$ are not disjoint.
Thus, there are $w_1, w_2 \in W$ and $h_1, h_2 \in H$
such that $w_1 g h_1 = w_2 x h_2$.
Then $g = w_1^{-1} w_2 x h_2 h_1^{-1}$
and so $q (g) \in q ( W^2 x )$.
Therefore $q(Wg) \subseteq q(W^3 x)$.
This proves the claim.

Hence, $\mathcal{W}$ is uniform and it clearly refines
the given cover ${\mathcal{B}}_{G, H} (U)$.

It remains to show that $\mathcal{W}$ has finite order.
Let $x_0, x_1, \ldots, x_k \in X$ be elements such that
$\bigcap_{j = 0}^k q ( W^3 x_j ) \neq \varnothing$.
This means that there are elements
$w_0, w_1 \ldots, w_k \in W^3$ and $h_0, h_1 \ldots, h_k \in H$
such that $w_j x_j h_j = w_0 x_0 h_0$ for $j = 1, 2, \ldots, k$.
It follows that
\[
W x_j h_j h_0^{-1} = W w_j^{-1} w_0 x_0 \subset W^3 x_0,
\]
for $j = 1, 2, \ldots, k$.

However, by construction of $X$,
the sets $W x_j h_j h_0^{-1}$ for $j = 1, 2, \ldots, k$
are pairwise disjoint.
So
\[
k \cdot \mu (W)
= \mu \left( \bigcup_{j=1}^k W x_j h_j h_0^{-1} \right)
\leq \mu \left( W^3 x_0 \right)
= \mu (W^3).
\]
Since $W$ is open and has compact closure,
$\mu (W)$ is non-zero and finite.
Thus $k \leq \mu (W^3) / \mu (W)$
and so $\mathcal{W}$ has finite order.
\end{proof}


\begin{cor}\label{P_Gp_LeftU}
Every locally compact group is uniformly finitistic
(with respect to both the right and left uniformity).
\end{cor}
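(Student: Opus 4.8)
The plan is to obtain this corollary as the special case $H = \{1\}$ of \autoref{P_CosetSp_LeftU}, supplemented by the elementary observation that inversion interchanges the left and right uniformities on $G$. First I would apply \autoref{P_CosetSp_LeftU} with $H = \{1\}$, so that $G / H = G$; this immediately yields that $G$, equipped with the right uniformity, is uniformly finitistic, i.e.\ every right uniform open cover admits a right uniform open refinement of finite order.

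Next I would verify that the inversion map $\iota \colon G \to G$, $\iota (s) = s^{- 1}$, is a uniform isomorphism from $G$ with the left uniformity onto $G$ with the right uniformity. The point is that for an open neighborhood $U$ of $1 \in G$, one has $\iota (s U) = U^{- 1} s^{- 1}$, so the image under $\iota$ of the basic left uniform cover $\{ s U \colon s \in G \}$ is $\{ U^{- 1} t \colon t \in G \}$, which is exactly the basic right uniform cover associated with the open neighborhood $U^{- 1}$ of~$1$; since $\iota^{- 1} = \iota$, the same computation gives the reverse correspondence. Because $\iota$ is a bijection, it carries covers to covers, preserves refinement, and preserves the order of a cover (intersections are mapped to intersections), so it takes left uniform open covers to right uniform open covers and vice versa, preserving finiteness of order throughout.

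Finally, combining the two steps: given a left uniform open cover ${\mathcal{U}}$ of~$G$, push it forward to the right uniform open cover $\iota ({\mathcal{U}})$, use the first step to refine it by a right uniform open cover ${\mathcal{W}}$ of finite order, and pull ${\mathcal{W}}$ back by $\iota$ to get a left uniform open cover $\iota ({\mathcal{W}})$ of finite order refining~${\mathcal{U}}$. This proves $G$ is uniformly finitistic for the left uniformity as well. I do not expect any real obstacle here; the only thing requiring care is the routine bookkeeping that $U$ is an open neighborhood of $1$ precisely when $U^{- 1}$ is, and that the correspondence of covers induced by $\iota$ respects refinement and order, both of which are immediate.
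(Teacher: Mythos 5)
Your proposal is correct and matches the paper's (implicit) derivation: the corollary is exactly the case $H = \{1\}$ of \autoref{P_CosetSp_LeftU} for the right uniformity, with the left uniformity handled by the standard left/right symmetry. Your verification that inversion exchanges the basic covers $\{ s U \colon s \in G \}$ and $\{ U^{-1} t \colon t \in G \}$, preserving refinement and order, is the routine bookkeeping the paper leaves unstated.
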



\begin{ntn}\label{N_2X10_Clu}
Let $G$ be a topological group
and let $A$ be a \ca.
We denote by $\Cru (G, A)$ the \ca{} of bounded functions
$f \colon G \to A$ which are right uniformly continuous.
This is just $\Cu (G, A)$ as in Notation~\ref{N_2X09_CbX}
when $G$ is equipped with the right uniformity.
We further let $\ld \colon G \to \Aut ( \Cb (G, A) )$
be the (not necessarily continuous) action given by
$\ld_s (f) (t) = f (s^{-1} t)$ for $f \in \Cb (G, A)$
and $s, t \in G$.
\end{ntn}


Left translation is continuous on the right uniformly continuous
functions,
not the left uniformly continuous functions.
The proof is known and not difficult;
we give it here primarily to convince the reader
that the statement is correct.
We start with a preparatory lemma,
which we also need for the left uniformity.


\begin{lem}\label{L_2X10_CharRU}
Adopt Notation~\ref{N_2X10_Clu}.
Let $f \in \Cb (G, A)$.
Then $f \in \Cru (G, A)$
if and only if for every $\ep > 0$ there is an open set $V \subset G$
with $1 \in V$ such that whenever $s, t \in G$ satisfy $s t^{-1} \in V$,
then $\| f (s) - f (t) \| < \ep$.
Also,
$f$ is left uniformly continuous
if and only if for every $\ep > 0$ there is an open set $V \subset G$
with $1 \in V$ such that whenever $s, t \in G$ satisfy $t^{-1} s \in V$,
then $\| f (s) - f (t) \| < \ep$.
\end{lem}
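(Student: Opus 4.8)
The plan is to unwind the definition of uniform continuity between uniform spaces directly, without passing through any metrization of~$G$, since $G$ is not assumed metrizable here. Recall from \autoref{D_2X09UnifFMet} that the basic uniform covers of the \ca~$A$ are the collections $\mathcal{B}(\ep) = \{ U_{\ep}(a) \colon a \in A \}$ for $\ep > 0$, and that these are cofinal among all uniform covers of~$A$. By \autoref{D_2X10_LeftU} with $H = \{1\}$, the covers $\{ V s \colon s \in G \}$, for $V$ an open neighborhood of $1 \in G$, are cofinal among the right uniform covers of~$G$, and symmetrically the covers $\{ s V \colon s \in G \}$ are cofinal among the left uniform covers. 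Since uniform continuity means precisely that preimages of uniform covers are uniform covers, combining this with the two cofinality remarks I would first record the following reformulation: for $f \in \Cb (G, A)$, membership in $\Cru (G, A)$ is equivalent to the condition that for every $\ep > 0$ there is an open neighborhood $V$ of~$1$ such that for each $s \in G$ the set $f(Vs)$ is contained in some ball $U_{\ep}(a)$; and left uniform continuity of~$f$ is equivalent to the same statement with $Vs$ replaced by $sV$.

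Granting this reformulation, both directions for the right uniformity are immediate. For the forward implication, given $\ep > 0$ choose (applying the reformulation with $\ep/2$ in place of~$\ep$) an open $V \ni 1$ such that each $f(Vs)$ lies in some $\frac{\ep}{2}$-ball; then whenever $s, t \in G$ satisfy $s t^{-1} \in V$, we have $s = (s t^{-1}) t \in Vt$ and $t = 1 \cdot t \in Vt$, so both $f(s)$ and $f(t)$ lie in $f(Vt)$, whence $\| f(s) - f(t) \| < \ep$. For the converse, suppose the displayed condition holds and fix $\ep > 0$ with an associated open $V \ni 1$. For every $s \in G$ and $u \in V$ we have $(us) s^{-1} = u \in V$, so $\| f(us) - f(s) \| < \ep$, which shows $f(Vs) \subset U_{\ep}(f(s))$ for every $s$; this is exactly the reformulated condition, so $f \in \Cru (G, A)$.

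The statement for the left uniformity then follows by the obvious left--right symmetry, replacing right translates by left translates throughout: one uses $s = t (t^{-1} s) \in tV$ and $t = t \cdot 1 \in tV$ when $t^{-1} s \in V$ for the forward direction, and $s^{-1}(su) = u \in V$ for the converse.

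I do not expect a genuine obstacle: the content is entirely a matter of matching the uniform-cover formalism of~\cite{Isb1964} with the explicit description of the two one-sided uniformities in \autoref{D_2X10_LeftU}, and of keeping the conventions straight --- the right uniformity corresponds to the condition on $s t^{-1}$ and the left uniformity to the condition on $t^{-1} s$. The only mildly delicate point is the routine factor-of-two slack when passing between ``$\{Vs\}$ refines the cover $\{ f^{-1}(U_{\ep}(a)) \}$'' and the two-point estimate $\| f(s) - f(t) \| < \ep$, which is absorbed by shrinking $\ep$ in the appropriate direction.
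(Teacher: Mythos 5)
Your proof is correct and follows essentially the same route as the paper's: both directions are a direct unwinding of the cover description of the one-sided uniformities from \autoref{D_2X10_LeftU} (with $H = \{1\}$) against the basic $\ep$-ball covers of~$A$ from \autoref{D_2X09UnifFMet}. The only difference is bookkeeping: you place the slack in the forward direction (using $\tfrac{\ep}{2}$-balls and centering at $f$ of the translate point in the converse), whereas the paper instead shrinks the neighborhood in the converse by choosing $U$ with $s,t \in U \Rightarrow s t^{-1} \in V$.
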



\begin{proof}
The proofs of the two statements are the same,
and we do only the first.

First assume $f$ is right uniformly continuous.
Then there is a nonempty open set $V \subset G$ with $1 \in V$
such that whenever $s, t, g \in G$ satisfy $s, t \in V g$,
then $\| f (s) - f (t) \| < \ep$.
If now $s, t \in G$ satisfy $s t^{-1} \in V$,
then $s \in V t$ and, since $1 \in V$, also $t \in V t$.
Taking $g = t$ above,
we get $\| f (s) - f (t) \| < \ep$.

Now assume that $f$ satisfies the condition of the lemma.
Let $\ep > 0$,
and choose $V \subset G$ as in this condition.
Choose an open subset $U \subset G$ such that $1 \in U$
and $s, t \in U$ implies $s t^{-1} \in V$.
Let $s, t, g \in G$ satisfy $s, t \in U g$.
Then $s g^{-1}, \, t g^{-1} \in U$,
so $s t^{-1} = (s g^{-1}) (t g^{-1})^{-1} \in V$.
Therefore
$\| f (s) - f (t) \| < \ep$.
\end{proof}


\begin{lem}\label{L_2X10_CtUCt}
Let the notation be as in Notation~\ref{N_2X10_Clu}.
Let $f \in \Cb (G, A)$.
Then $s \mapsto \ld_s (f)$ is continuous
if and only if $f \in \Cru (G, A)$.
\end{lem}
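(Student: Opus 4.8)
The plan is to prove the two implications separately, in each case unwinding the definition of the left translation action $\ld$ and of right uniform continuity via the characterization in \autoref{L_2X10_CharRU}. Throughout we use the elementary fact that $\| \ld_s (f) - \ld_t (f) \| = \sup_{u \in G} \| f (s^{-1} u) - f (t^{-1} u) \|$, and that after the substitution $v = t^{-1} u$ this equals $\sup_{v \in G} \| f (t s^{-1} v) - f (v) \|$. Thus $\| \ld_s (f) - \ld_t (f) \|$ depends on $s$ and $t$ only through the element $t s^{-1}$, and continuity of $s \mapsto \ld_s (f)$ at a point is equivalent to continuity at the identity, which is equivalent to the statement that for every $\ep > 0$ there is an open $V \ni 1$ such that $r \in V$ implies $\sup_{v} \| f (r v) - f (v) \| < \ep$ (here $r$ plays the role of $t s^{-1}$, and note that $r \in V$ can be arranged from $t s^{-1} \in V$).

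First I would prove that $f \in \Cru (G, A)$ implies continuity of $s \mapsto \ld_s (f)$. Given $\ep > 0$, apply \autoref{L_2X10_CharRU}: there is an open $V \ni 1$ such that $s' (t')^{-1} \in V$ forces $\| f (s') - f (t') \| < \ep$. For $r \in V$ and any $v \in G$, the pair $s' = r v$, $t' = v$ satisfies $s' (t')^{-1} = r v v^{-1} = r \in V$, hence $\| f (r v) - f (v) \| < \ep$; taking the supremum over $v$ and using the displayed formula gives $\| \ld_s (f) - \ld_t (f) \| \leq \ep$ whenever $t s^{-1} \in V$. Since $(s, t) \mapsto t s^{-1}$ is continuous, this shows $s \mapsto \ld_s (f)$ is continuous.

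For the converse, suppose $s \mapsto \ld_s (f)$ is continuous; I would verify the right-uniform-continuity criterion of \autoref{L_2X10_CharRU}. Fix $\ep > 0$. By continuity at $1 \in G$ (where $\ld_1 (f) = f$), there is an open $V \ni 1$ with $\| \ld_r (f) - f \| < \ep$ for all $r \in V$; that is, $\sup_{v} \| f (r^{-1} v) - f (v) \| < \ep$ for $r \in V$. Now given $s, t \in G$ with $s t^{-1} \in V$, set $r = (s t^{-1})^{-1} = t s^{-1} \in V^{-1}$; after possibly replacing $V$ by $V \cap V^{-1}$ at the outset (a smaller open neighborhood of $1$), we may assume $V = V^{-1}$, so $t s^{-1} \in V$ as well. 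Then applying the bound with this $r$ and $v = s$ gives $\| f (s) - f (t) \| = \| f (r^{-1} s) - f (s) \| < \ep$, wait — one must be slightly careful that $r^{-1} v = s$ has a solution; taking $v = s$ yields $r^{-1} v = r^{-1} s = s t^{-1} s$, which is not what is wanted, so instead I would substitute directly: writing $w = t s^{-1}$ and noting $\| \ld_{s t^{-1}}(f) - f\| = \sup_u \|f((s t^{-1})^{-1} u) - f(u)\|$, evaluate the supremand at $u = s$ to get $\|f(t s^{-1} s) - f(s)\| $; this still is not $\|f(t)-f(s)\|$. The clean route is to evaluate instead at the right point: $\|\ld_{st^{-1}}(f) - f\| \geq \|f((st^{-1})^{-1} u) - f(u)\|$ with $u = s$ gives $\|f(t) - f(s)\|$ precisely when $(st^{-1})^{-1} s = t$, i.e. $t s^{-1} s = t$, which holds identically. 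So with $u = s$ we get exactly $\| f (t) - f (s) \| < \ep$, as required.

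The main obstacle is bookkeeping with left versus right translates and the directions of the inequalities: it is easy to conflate $s t^{-1}$ with $t^{-1} s$ or to pick the wrong evaluation point $u$, and the whole point of the lemma is that left translation pairs with the \emph{right} uniformity, not the left one. Once the substitution $\| \ld_s (f) - \ld_t (f) \| = \sup_{u} \| f (t s^{-1} u) - f (u) \|$ is recorded and one consistently uses $u = s$, both directions reduce immediately to \autoref{L_2X10_CharRU}. No deep input is needed beyond that lemma and the joint continuity of the group operations.
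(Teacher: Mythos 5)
Your argument is correct in substance and follows essentially the same route as the paper: both directions reduce to the criterion of \autoref{L_2X10_CharRU}, the forward direction via the estimate $\sup_v \| f (r v) - f (v) \| < \ep$ for $r$ near $1$, and the converse via evaluating $\| \ld_{s t^{-1}} (f) - f \|$ at the point $s$, which is exactly the paper's computation. One bookkeeping slip should be fixed, though: the substitution $v = t^{-1} u$ in your ``elementary fact'' gives $\| \ld_s (f) - \ld_t (f) \| = \sup_{v} \| f (s^{-1} t v) - f (v) \|$, so the norm depends on $s$ and $t$ only through $s^{-1} t$ (equivalently $t^{-1} s$), not through $t s^{-1}$ as you wrote; in a nonabelian group these are different elements. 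The error is harmless here: your forward direction actually proves $\| \ld_s (f) - \ld_t (f) \| \leq \ep$ whenever $s^{-1} t \in V$, and since $(s, t) \mapsto s^{-1} t$ is continuous and sends the diagonal to $1$, continuity of $s \mapsto \ld_s (f)$ follows just as well, while your converse, as finally written, uses the correct expansion of $\| \ld_{s t^{-1}} (f) - f \|$ and needs no symmetrization of $V$ at all.
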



\begin{proof}
First assume $f$ is right uniformly continuous.
Let $\ep > 0$.
It suffices to find an open subset $V \subset G$
such that $1 \in V$ and whenever $s \in V$ and $t \in G$,
then $\| \ld_{t s} (f) - \ld_t (f) \| < \ep$.
Choose an open subset $V \subset G$ as in
\autoref{L_2X10_CharRU}
with $\frac{\ep}{2}$ in place of~$\ep$.
Let $s \in V$ and $t \in G$.
Then for $g \in G$
we have
$(t^{-1} g) (s^{-1} t^{-1} g)^{-1} = s \in V$,
so
\[
\| \ld_{t s} (f) (g) - \ld_t (f) (g) \|
  = \| f (s^{-1} t^{-1} g) - f (t^{-1} g) \|
  < \frac{\ep}{2}.
\]
Taking the supremum over $g \in G$,
we get
$\| \ld_{t s} (f) - \ld_t (f) \| \leq \frac{\ep}{2} < \ep$.

For the converse, assume that
$s \mapsto \ld_s (f)$ is continuous.
We verify the criterion of \autoref{L_2X10_CharRU}.
Let $\ep > 0$.
Choose an open subset $V \subset G$
such that $1 \in V$ and whenever $s \in V$
then $\| \ld_{s} (f) - f \| < \ep$.
Let $s, t \in G$ satisfy $s t^{-1} \in V$.
Then
\[
\| f (s) - f (t) \|
  = \| f (s) - \ld_{s t^{-1}} (f) (s) \|
  \leq \| f - \ld_{s t^{-1}} (f) \|
  < \ep.
\]
This completes the proof.
\end{proof}


We now give a definition which is very similar to \autoref{D_2X08Ind},
but which uses bounded uniformly continuous functions
instead of functions vanishing at infinity.


\begin{dfn}\label{R_NCptInd}
Let $G$ be a \lcg, and let $H \leqslant G$ be a closed subgroup.
Let $\af \colon H \to \Aut (A)$ be an action of $H$ on a \ca~$A$.
We define a \ca{}  $F_H^G (A)$, with not necessarily \ct{} action
$F_H^G (\af) \colon G \to \Aut \big( F_H^G (A) \big)$,
by
\[
F_H^G (A)
  = \big\{ f \in C_{\mathrm{b}} (G, A) \colon
    {\mbox{$\af_h (f (s h)) = f (s)$ for all $s \in G$ and $h \in H$}}
        \big\}
\]
and
\[
\big( F_H^G (\af) \big)_s (f) (t)
  = f (s^{- 1} t)
\]
for $f \in F_H^G (A)$ and $s, t \in G$.
We further define a subalgebra $\LUCInd_H^G (A) \subset F_H^G (A)$
by
\[
\LUCInd_H^G (A)
 = \big\{ f \in F_H^G (A) \colon
    {\mbox{$s \mapsto \big( F_H^G (\af) \big)_s (f)$ is continuous}}
        \big\},
\]
and we take $\LUCInd_H^G (\af)$ to be the restriction of $F_H^G (\af)$
to this subalgebra.

If $A$ and $B$ are $H$-algebras and
$\ph \colon A \to B$ is an $H$-morphism,
then the induced $G$-morphisms
\[
F_H^G (\ph) \colon F_H^G (A) \to F_H^G (B)
\andeqn
\LUCInd_H^G (\ph) \colon \LUCInd_H^G (A) \to \LUCInd_H^G (B)
\]
are defined
by sending $f$ in $F_H^G (A)$ or $\LUCInd_H^G (A)$ as appropriate
to the function $s \mapsto \ph (f (s))$
for $s \in G$.

We call $\LUCInd_H^G$ the {\emph{right uniform induction functor}}.
\end{dfn}


\begin{lem}\label{L_2X10_PpOfLUCInd}
Let $G$ be a \lcg, and let $H \leqslant G$ be a closed subgroup.
Let the notation be as in \autoref{R_NCptInd}
and \autoref{D_3205_Cat}.
Then:
\begin{enumerate}
\item\label{L_2X10_PpOfLUCInd_Int}
$\LUCInd_H^G (A) = F_H^G (A) \cap \Cru (G, A)$.
\item\label{L_2X10_PpOfLUCInd_F}
$F_H^G$ is a functor from the category $\CatH$ of $H$-algebras
to the category of \ca{s} with not necessarily continuous
actions of~$G$.
\item\label{L_2X10_PpOfLUCInd_Funct}
$\LUCInd_H^G$ is a functor from $\CatH$ to~$\CatG$.
\item\label{L_2X10_PpOfLUCInd_Cpt}
If $G / H$ is compact,
then $\LUCInd_H^G = F_H^G = \Ind_H^G$.
\end{enumerate}
\end{lem}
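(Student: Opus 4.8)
The plan is to dispatch the four parts in turn, reducing each to \autoref{L_2X10_CtUCt}, \autoref{P_2X13CuCst}, and the definitions. For part~(1), first I would unwind the defining conditions: for $f \in F_H^G(A)$ and $s, t \in G$ one has $(F_H^G(\af))_s(f)(t) = f(s^{-1} t) = \ld_s(f)(t)$, where $\ld$ is the left translation action of Notation~\ref{N_2X10_Clu}. Hence $s \mapsto (F_H^G(\af))_s(f)$ is continuous if and only if $s \mapsto \ld_s(f)$ is, and by \autoref{L_2X10_CtUCt} this holds precisely when $f \in \Cru(G, A)$. So membership of $f$ in $\LUCInd_H^G(A)$ is equivalent to $f \in F_H^G(A) \cap \Cru(G, A)$, which is part~(1).

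Parts~(2) and~(3) are then essentially bookkeeping. For part~(2) I would check that $F_H^G(A)$ is a $C^*$-subalgebra of $\Cb(G, A)$ (for each fixed $(s, h)$ the relation $\af_h(f(sh)) = f(s)$ is a closed linear condition on $f$, and the set of all $f$ satisfying them is closed under products and adjoints), that each $(F_H^G(\af))_s$ is a $*$-automorphism and $s \mapsto (F_H^G(\af))_s$ a (not necessarily continuous) group homomorphism, and that for an $H$-morphism $\ph$ the formula $F_H^G(\ph)(f) = \ph \circ f$ defines a $G$-equivariant \stHm{} into $F_H^G(B)$, using equivariance of $\ph$; functoriality is immediate. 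For part~(3) I would combine part~(1) with \autoref{P_2X13CuCst} to see that $\LUCInd_H^G(A) = F_H^G(A) \cap \Cru(G, A)$ is a \ca; note it is invariant under $F_H^G(\af)$, since for fixed $r \in G$ the map $s \mapsto (F_H^G(\af))_s((F_H^G(\af))_r(f)) = (F_H^G(\af))_{sr}(f)$ is the composite of the continuous maps $s \mapsto sr$ and $s' \mapsto (F_H^G(\af))_{s'}(f)$, hence continuous; conclude that the restricted action $\LUCInd_H^G(\af)$ is continuous by the very definition of $\LUCInd_H^G(A)$; and check that $\LUCInd_H^G(\ph)$ maps $\LUCInd_H^G(A)$ into $\LUCInd_H^G(B)$, since $\ld_s(\ph \circ f) = \ph \circ \ld_s(f)$ and $\ph$ is norm-contractive. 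Thus $\LUCInd_H^G$ is a functor $\CatH \to \CatG$.

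For part~(4), assume $G/H$ is compact. The key observation is that for $f \in F_H^G(A)$ the function $s \mapsto \|f(s)\|$ is continuous on $G$ and constant on right $H$-cosets, hence descends to a continuous function on $G/H$, which lies in $C_0(G/H) = C(G/H)$ by compactness; so the decay condition in \autoref{D_2X08Ind} is automatic and $F_H^G(A) = \Ind_H^G(A)$, with identical action formulas and identical induced morphisms. Since the action $\Ind_H^G(\af)$ is continuous (as noted just before \autoref{D_2X08Ind}) and equals $s \mapsto \ld_s(\cdot)$, \autoref{L_2X10_CtUCt} gives $\Ind_H^G(A) \subset \Cru(G, A)$, and then part~(1) yields $\LUCInd_H^G(A) = F_H^G(A) \cap \Cru(G, A) = F_H^G(A)$. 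The only genuinely non-formal step is this identification in part~(4) --- recognizing that removing the $C_0(G/H)$-condition changes nothing when $G/H$ is compact and that the norm function automatically descends to $G/H$; once that is seen, the coincidence of all three functors drops out of \autoref{L_2X10_CtUCt} and continuity of the $\Ind_H^G$-action.
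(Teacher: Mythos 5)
Your proposal is correct and follows essentially the same route as the paper: part (1) by identifying the induced action with left translation and invoking \autoref{L_2X10_CtUCt}, parts (2)--(3) by routine algebraic checks plus the observation that $F_H^G(\ph)$ preserves uniform continuity, and part (4) by noting the $C_0(G/H)$-decay condition is vacuous for compact $G/H$ and that the $\Ind_H^G$-action is continuous. The paper's proof is merely a terser version of the same argument, so no comparison is needed.
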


\begin{proof}
Part~(\ref{L_2X10_PpOfLUCInd_Int}) follows from \autoref{L_2X10_CtUCt}.
Part~(\ref{L_2X10_PpOfLUCInd_F}) is an algebraic calculation.
Part~(\ref{L_2X10_PpOfLUCInd_Funct})
follows from part~(\ref{L_2X10_PpOfLUCInd_Int}),
part~(\ref{L_2X10_PpOfLUCInd_F}),
and the fact that the formula for $F_H^G (\ph)$
preserves uniform continuity.
Part~(\ref{L_2X10_PpOfLUCInd_Cpt})
follows from the observation that the condition
in \autoref{D_2X08Ind},
that $s H \mapsto \| f (s) \|$ be in $C_0 (G / H)$,
is automatic when $G / H$ is compact,
and the fact that left translation is continuous on $\Ind_H^G (A)$.
\end{proof}


To formulate the next result,
recall that a topological group $G$ is called a \SINgp\
(for ``small invariant neighborhoods'')
if every neighborhood of $1 \in G$
contains a neighborhood $V$ of $1$ that is invariant
(meaning that $g V g^{-1} = V$ for all $g \in G$).
Such groups are also called balanced.
It is easy to see that a group is a \SINgp\
if and only if the left and right uniformities on $G$ agree
(equivalently, the assignment $g \mapsto g^{-1}$
is uniformly continuous when regarded as a map
from $G$ to itself, both equipped with the right uniformity).
The class of \SINgp s includes all groups
that are abelian, compact, or discrete.
See \cite{Pal1978} for more on \SINgp{s}.


\begin{thm}
\label{P_LUCInd-Exact}
Let $G$ be a \lcg{} and let $H \leqslant G$ be a closed subgroup.
Assume that at least one of the following conditions is satisfied:
\begin{enumerate}
\item\label{P_LUCInd-Exact_Cpt}
$H$ is compact.
\item\label{P_LUCInd-Exact_CoCpt}
$G / H$ is compact.
\item\label{P_LUCInd-Exact_SIN}
$G$ is a \SINgp.
\end{enumerate}
Then the right uniform induction functor
$\LUCInd_H^G \colon \CatH \to \CatG$ is exact,
that is, given an $H$-equivariant short exact sequence of $H$-algebras
\[
0 \longrightarrow I
  \stackrel{\io}{\longrightarrow} A
  \stackrel{\pi}{\longrightarrow} B
  \longrightarrow 0,
\]
the induced $G$-equivariant sequence of $G$-algebras
\[
\xymatrix{
0 \ar[r]
& \LUCInd_H^G (I) \ar[rr]^{\LUCInd_H^G (\io)}
&
& \LUCInd_H^G (A) \ar[rr]^{\LUCInd_H^G (\pi)}
&
& \LUCInd_H^G (B) \ar[r]
& 0
}
\]
is also exact.
\end{thm}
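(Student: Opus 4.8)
The plan is to reduce everything to surjectivity of $\LUCInd_H^G(\pi)$ and then treat the three hypotheses largely separately. Exactness of the induced sequence apart from that surjectivity is routine, exactly as in the first two paragraphs of the proof of \autoref{P_Ind_Exact}: $\LUCInd_H^G(\io)$ is an inclusion (hence injective), and exactness in the middle is the identity $\ker \big( \LUCInd_H^G(\pi) \big) = \{ f \in \LUCInd_H^G(A) : f(G) \subseteq I \} = \LUCInd_H^G(I)$. Since $\LUCInd_H^G(\pi)$ is a \stHm{} its image is closed, so it suffices to show the image is dense. When $G/H$ is compact (case~(\ref{P_LUCInd-Exact_CoCpt})) this is immediate: $\LUCInd_H^G = \Ind_H^G$ by \autoref{L_2X10_PpOfLUCInd}(\ref{L_2X10_PpOfLUCInd_Cpt}), and surjectivity of $\Ind_H^G(\pi)$ is part of \autoref{P_Ind_Exact}.

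When $H$ is compact (case~(\ref{P_LUCInd-Exact_Cpt})) I would argue by averaging. Given $f \in \LUCInd_H^G(B) \subseteq \Cru(G,B)$, the group $G$ with its right uniformity is uniformly finitistic by \autoref{P_Gp_LeftU}, so \autoref{P_UniformLifts} produces $g_0 \in \Cru(G,A)$ with $\pi \circ g_0 = f$. Normalizing the Haar measure $\mu$ of $H$ so $\mu(H)=1$, put $g(s) = \int_H \af_h\big(g_0(sh)\big)\,d\mu(h)$. Then $g$ is bounded; a short computation with left invariance of $\mu$ gives $\af_h(g(sh)) = g(s)$, so $g \in F_H^G(A)$; equivariance of $\pi$ and the cocycle identity for $f$ give $\pi \circ g = f$; and from $\|g(s)-g(s')\| \leq \int_H \|g_0(sh)-g_0(s'h)\|\,d\mu(h)$ together with $(sh)(s'h)^{-1} = s(s')^{-1}$ one sees, via \autoref{L_2X10_CharRU}, that $g$ is right uniformly continuous. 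Hence $g \in F_H^G(A) \cap \Cru(G,A) = \LUCInd_H^G(A)$ by \autoref{L_2X10_PpOfLUCInd}(\ref{L_2X10_PpOfLUCInd_Int}), and $\LUCInd_H^G(\pi)(g)=f$.

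The \SINgp{} case~(\ref{P_LUCInd-Exact_SIN}) is the real work, and for it I would follow the proof of \autoref{P_UniformLifts}, run over the base $G/H$. The organizing observation is that $\LUCInd_H^G(A)$ is a $\Cru(G/H)$-algebra: multiplication by $g \in \Cru(G/H)$ through $(g\cdot f)(s) = g(sH)f(s)$ preserves both the cocycle relation and right uniform continuity, and for each $x \in G$ the evaluation $\ev_x\colon \LUCInd_H^G(A)\to A$ is a surjection whose kernel is $\{f : s\mapsto\|f(s)\|$ vanishes on a right-uniform neighbourhood of $xH\}$, so all fibers are (non-canonically) $A$. Surjectivity of $\ev_x$, together with the ability to choose preimages of uniformly bounded norm and uniform modulus of continuity, is where the \SINgp{} hypothesis is essential: one builds preimages of $a\in A$ by $d(s)=\int_H \ch_0(x^{-1}st)\af_t(a)\,d\mu(t)$ as in the proof of \autoref{P_2X08_IndIsCXA}, with $\ch_0$ a fixed bump on $G$ of small support and $\int_H\ch_0\,d\mu=1$, and checks that such a family is uniformly bounded and equiuniformly continuous --- the latter because in a \SINgp{} conjugation is uniformly continuous, so $s\mapsto\ch_0(x^{-1}st)$ is right uniformly continuous with a modulus independent of $x$ and $t$; the same conjugation estimate shows that $\af_h$ moves $\{f(s):s\in G\}$ uniformly little for $h$ near $1$, which is needed to compare $\LUCInd_H^G(\pi)(d)$ with $f$ near $xH$.

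Granting all this, the endgame mimics \autoref{P_UniformLifts}. Given $f\in\LUCInd_H^G(B)$ and $\ep>0$, choose a uniform open cover of $G/H$ fine enough for the comparisons above, refine it by \autoref{P_CosetSp_LeftU} to a uniform open cover $\mathcal{U}$ of finite order, take an equiuniformly continuous partition of unity $(h_U)_{U\in\mathcal{U}}$ subordinate to $\mathcal{U}$ via \autoref{T_2X09UPartU}, and for each $U$ pick $x_U$ with $x_UH\in U$, a lift $a_U\in A$ of $f(x_U)$, and $d_U\in\LUCInd_H^G(A)$ given by the integral formula above with $x=x_U$, $a=a_U$. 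Then set $g=\sum_{U\in\mathcal{U}}(h_U\circ q)\cdot d_U$: the finite order makes the sum locally finite, the equiuniform continuity of $(h_U)$ together with the uniform modulus of $\{d_U\}$ makes $g$ right uniformly continuous, so $g\in\LUCInd_H^G(A)$; and the local choices give $\|\LUCInd_H^G(\pi)(g)-f\|<\ep$. This shows the image is dense. The main obstacle is exactly this last case --- setting up the $\Cru(G/H)$-algebra picture and, above all, proving fiberwise surjectivity of $\ev_x$ with preimages chosen uniformly in $x$, which is the one place the \SINgp{} assumption is used and is precisely what forces the patched function to land in $\LUCInd_H^G(A)$ rather than merely in $\Cru(G,A)$.
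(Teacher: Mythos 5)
Your overall architecture is the paper's: reduce to density of the range of $\LUCInd_H^G(\pi)$, dispose of case~(\ref{P_LUCInd-Exact_CoCpt}) via \autoref{L_2X10_PpOfLUCInd}(\ref{L_2X10_PpOfLUCInd_Cpt}) and \autoref{P_Ind_Exact}, handle compact $H$ by lifting in $\Cru(G,A)$ (\autoref{P_UniformLifts} plus \autoref{P_Gp_LeftU}) and averaging over $H$, and handle the \SINgp{} case by patching $H$-averaged bump-function lifts with a finite-order uniform partition of unity on $G/H$ (\autoref{P_CosetSp_LeftU}, \autoref{T_2X09UPartU}); your summand $(h_U\circ q)\cdot d_U$ is exactly the paper's $\int_H g_W(sh)\af_h(a_W)\,d\mu(h)$. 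The $H$-compact argument you give is complete and correct, and doing the average pointwise, $g(s)=\int_H\af_h(g_0(sh))\,d\mu(h)$, is in fact a good way to phrase it, since the translation-plus-$\af$ action of $H$ on all of $\Cru(G,A)$ need not be point-norm continuous.

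The genuine gap is in the \SINgp{} case, at the normalization of the bump. Requiring only $\int_H\ch_0\,d\mu=1$ normalizes the slice integral through the basepoint; what the final estimate needs is control of $c_U(s)=\int_H\ch_0(x_U^{-1}sh)\,d\mu(h)$ for \emph{all} $s$ with $l_U(sH)\neq 0$. Indeed, modulo the left-uniform-continuity comparison you describe, $\pi(g(s))\approx\big(\sum_U l_U(sH)\,c_U(s)\big)\,f(s)$, and with your choice of $\ch_0$ the weights $c_U(s)$ can be far from $1$ away from $x_U$ (already for $G=\R$, $H=\Z$: if $\ch_0$ is a bump at $0$ with $\sum_n\ch_0(n)=1$, then $\sum_n\ch_0(y+n)$ drops toward $0$ at the edge of the support), so $\|\LUCInd_H^G(\pi)(g)-f\|<\ep$ does not follow. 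Fixing this is the technical heart of the paper's proof: one constructs $\ch_0$ so that its $H$-slice integrals are exactly $1$ on a set $V_0H$ and at most $1$ everywhere (by dividing a bump $f_0$ by $k(s)=\int_H f_0(sh)\,d\mu(h)$ and cutting off with a function on $G/H$), and one uses the \SINgp{} hypothesis a second time, via an invariant neighborhood $V_1$ with $HV_1H\subset V_0H$ together with the requirement that the finite-order cover refine the images of the left translates $sV$, to guarantee $x_U^{-1}s\in V_0H$ whenever $l_U(sH)\neq 0$; this yields the exact identity $\int_H l_U(sH)\ch_0(x_U^{-1}sh)\,d\mu(h)=l_U(sH)$ on which both the boundedness and the error estimate rest. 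A smaller omission: your equiuniform-continuity claim for the family $(d_U)$ needs, besides the conjugation-invariance estimate for $\ch_0$, a uniform bound on $\mu\big(t\,\overline{\supp\ch_0}\cap H\big)$ over $t\in G$ (the paper's \autoref{L_2X15MeasCpt}), since the pointwise estimate must be integrated over an $H$-slice of the support.
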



\begin{rmk}
It seems natural to expect that \autoref{P_LUCInd-Exact}
holds in greater generality.
We do not know whether any condition is necessary
to show that the right uniform induction functor is exact.
\end{rmk}

We need two further lemmas for the proof.


\begin{lem}\label{L_2X13CcUnifC}
Let $G$ be a locally compact \SINgp,
and let $f \in C_{\mathrm{c}} (G)$.
Then for every $\ep > 0$ there is an open set $U \subset G$
such that $1 \in U$ and such that whenever $g, h, s, t \in G$
satisfy $s^{-1} t \in U$,
then $| f (g s h) - f (g t h) | < \ep$.
\end{lem}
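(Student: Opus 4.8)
The plan is to reduce the desired two-sided estimate to an ordinary one-sided translation estimate via the identity $gth = (gsh)\bigl(h^{-1}(s^{-1}t)h\bigr)$, and then to absorb the conjugation $h^{-1}(\,\cdot\,)h$ by using a \emph{conjugation-invariant} neighborhood of $1 \in G$, which exists because $G$ is a \SINgp. Concretely, I would first recall the standard fact that $f \in C_{\mathrm{c}}(G)$ is uniformly continuous, so that for the given $\ep > 0$ there is an open set $V \subset G$ with $1 \in V$ and $|f(xv) - f(x)| < \ep$ for all $x \in G$ and $v \in V$. (Alternatively, $f \in C_{\mathrm{c}}(G) \subset \Cru(G, \C)$ by \autoref{L_2X10_CtUCt}, and since the left and right uniformities on a \SINgp{} agree, the criterion of \autoref{L_2X10_CharRU} applies.) Then, using that $G$ is a \SINgp, I would choose an open neighborhood $U$ of $1$ with $U \subset V$ and $gUg^{-1} = U$ for all $g \in G$.

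The remaining step is to check that this $U$ works. Given $g, h, s, t \in G$ with $s^{-1}t \in U$, set $w = h^{-1}(s^{-1}t)h$; invariance of $U$ gives $w \in U \subset V$. Since $gth = (gsh)w$, applying the estimate above with $x = gsh$ yields $|f(gth) - f(gsh)| = |f(xw) - f(x)| < \ep$, which is exactly the assertion.

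I do not expect a real obstacle here. The only points that need (routine) attention are the uniform continuity of compactly supported functions on a locally compact group and keeping track of which uniform-continuity convention is in force --- which is harmless on a \SINgp, where the left and right uniformities coincide.
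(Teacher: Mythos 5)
Your proposal is correct and follows essentially the same route as the paper's proof: invoke (left) uniform continuity of $f \in C_{\mathrm{c}}(G)$ to get a neighborhood $V$ of $1$, shrink it to a conjugation-invariant $U \subset V$ using the \SINgp{} hypothesis, and then observe that $(g s h)^{-1}(g t h) = h^{-1} (s^{-1} t) h \in U \subset V$. No gaps; the only detail worth citing explicitly is a reference for uniform continuity of compactly supported functions (the paper uses Lemma~1.62 of \cite{Wl}).
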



\begin{proof}
Lemma~1.62 of~\cite{Wl} provides an open set $V \subset G$
such that $1 \in V$ and such that whenever $s, t \in G$
satisfy $s^{-1} t \in V$,
then $| f (s) - f (t) | < \ep$.
(By \autoref{L_2X10_CharRU},
this is just left uniform continuity of~$f$.)
Since $G$ is a \SINgp,
there is an invariant open set $U \subset G$
such that $1 \in U$ and $U \subset V$.
Now let $g, h, s, t \in G$
satisfy $s^{-1} t \in U$.
Then
\[
(g s h)^{-1} (g t h) = h^{-1} s^{-1} t h \in h^{-1} U h = U \subset V,
\]
so that $| f (g s h) - f (g t h) | < \ep$.
\end{proof}


\begin{lem}\label{L_2X15MeasCpt}
Let $G$ be a \lcg,
let $H \leqslant G$ be a closed subgroup,
let $\mu$ be a left Haar measure on~$H$,
and let $L \subset G$ be compact.
Then $\sup_{s \in G} \mu (s L \cap H)$ is finite.
\end{lem}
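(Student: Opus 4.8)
The plan is to reduce the problem, by left-translating inside $H$, to the single fixed compact set $L^{-1} L \cap H$, whose Haar measure is automatically finite.

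First I would note that $sL \cap H$ is a closed subset of the compact set $sL$, hence compact, and in particular a Borel subset of $H$. Next, $sL \cap H$ is empty unless $s \in H L^{-1}$: indeed, any $h \in sL \cap H$ may be written $h = s\ell$ with $\ell \in L$, so that $s = h \ell^{-1} \in H L^{-1}$. Thus it suffices to bound $\mu(sL \cap H)$ for $s \in H L^{-1}$, and for such $s$ I would fix a decomposition $s = h_0 \ell_0^{-1}$ with $h_0 \in H$ and $\ell_0 \in L$.

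Now, since $h_0 \in H$ we have $h_0^{-1} H = H$, and therefore
\[
h_0^{-1} (sL \cap H) = (h_0^{-1} s L) \cap (h_0^{-1} H) = \ell_0^{-1} L \cap H .
\]
Applying left invariance of the Haar measure $\mu$ on $H$ (legitimate because $sL \cap H$ is a Borel subset of $H$), and then using $\ell_0^{-1} \in L^{-1}$, hence $\ell_0^{-1} L \subseteq L^{-1} L$, I obtain
\[
\mu(sL \cap H) = \mu\big( h_0^{-1}(sL \cap H) \big) = \mu(\ell_0^{-1} L \cap H) \leq \mu(L^{-1} L \cap H),
\]
a bound independent of $s$.

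Finally, $L^{-1} L$ is compact, being the image of $L \times L$ under the continuous map $(x,y) \mapsto x^{-1} y$; since $H$ is closed in $G$, the set $L^{-1} L \cap H$ is a compact subset of $H$, and so has finite Haar measure. This gives $\sup_{s \in G} \mu(sL \cap H) \leq \mu(L^{-1} L \cap H) < \infty$, as desired. There is no real obstacle in this argument; the only point meriting a moment's care is the invocation of translation invariance, which is justified by the measurability (indeed compactness) of $sL \cap H$ inside $H$.
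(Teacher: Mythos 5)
Your proof is correct, and it takes a genuinely different route from the paper's. The paper's argument is analytic: it chooses $f \in C_{\mathrm{c}}(G)$ with $f = 1$ on $L$, integrates it over $H$ to get a continuous, right-$H$-invariant function on $G$ that drops to a compactly supported continuous function $g$ on $G/H$, and bounds $\mu(sL \cap H)$ by $\sup_{x \in G/H} g(x) < \infty$. You instead observe that $sL \cap H$ is empty unless $s \in HL^{-1}$, and that for $s = h_0 \ell_0^{-1}$ the left translate $h_0^{-1}(sL \cap H) = \ell_0^{-1}L \cap H$ sits inside the single fixed compact set $L^{-1}L \cap H$, so left invariance of $\mu$ on $H$ yields the explicit uniform bound $\mu(sL \cap H) \leq \mu(L^{-1}L \cap H) < \infty$. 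All the steps check out: $sL \cap H$ is compact (hence Borel in $H$), translation by $h_0^{-1} \in H$ stays inside $H$ so invariance applies, and $L^{-1}L \cap H$ is compact since $L^{-1}L$ is the continuous image of $L \times L$ and $H$ is closed. Your argument is shorter, avoids integration, continuity arguments, and the quotient $G/H$ altogether, and produces a concrete bound; the paper's version is in the same spirit as the bump-function constructions it uses elsewhere (e.g., in the exactness proofs), but for this lemma your approach is the more elementary one.
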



\begin{proof}
Let $q \colon G \to G / H$ be the quotient map.
Choose a continuous function $f \colon G \to [0, 1]$
with compact support and such that $f = 1$ on~$L$.
For $s \in G$ define
\[
g_0 (s) = \int_H f (s h) \, d \mu (h).
\]
Then $g_0$ is continuous and satisfies $g_0 (s k) = g_0 (s)$
for all $s \in G$ and $k \in H$.
Therefore $g_0$ drops
to a continuous function~$g$ on $G / H$.
If $s \not\in \supp (f) H$,
then $g_0 (s) = 0$.
Therefore $\supp ( g ) \subset q ( \supp (f) )$,
and so is compact.
Now
\[
\sup_{s \in G} \mu (s L \cap H)
 \leq \sup_{s \in G} \int_H f (s^{-1} h) \, d \mu (h)
 = \sup_{x \in G / H} g (x)
 < \infty.
\]
This completes the proof.
\end{proof}


\begin{proof}[Proof of \autoref{P_LUCInd-Exact}]
To simplify the notation,
we abbreviate the functor $\LUCInd_H^G$ to~$\LUCInd$.
As in the proof of \autoref{P_Ind_Exact},
it is easy to check that the induced sequence
is exact at the left and
in the middle.
Thus, it remains to check that
$\LUCInd (\pi) \colon \LUCInd (A) \to \LUCInd (B)$ is surjective.

If $G / H$ is compact,
the right uniform induction functor
agrees with the usual induction functor
by \autoref{L_2X10_PpOfLUCInd}\eqref{L_2X10_PpOfLUCInd_Cpt},
so is exact by \autoref{P_Ind_Exact}.

Now assume that $H$ is compact.
It is clear from \autoref{R_NCptInd}
and \autoref{L_2X10_CtUCt}
that
$\LUCInd (A)$ is the fixed point algebra
of the action $\gm \colon H \to \Aut (\Cru (G, A))$
given by
$\gm_h (f) (s) = \af_h (f (s h))$
for $f \in \Cru (G, A)$, $s \in G$, and $h \in H$,
and similarly for~$B$.
By \autoref{P_UniformLifts} and \autoref{P_Gp_LeftU},
the induced \stHm{}
$\kp \colon \Cru (G, A) \to \Cru (G, B)$ is surjective.
Since $H$ is compact, \autoref{P_FixToFix}
implies that the restriction to the fixed point algebras
is also surjective.

For the last case, let us assume that $G$ is a \SINgp.
Let $\af \colon H \to \Aut (A)$ and
$\bt \colon H \to \Aut (B)$ denote the actions of $G$.
Let $q \colon G \to G / H$ denote the quotient map.
Let $\mu$ be a left Haar measure on~$H$.

Let $b \in \LUCInd (B)$ and let $\ep > 0$.
We construct $a \in \LUCInd (A)$ such that
$\| \pi \circ a - b \| < \ep$.
The function $b$ is right uniformly continuous
by \autoref{L_2X10_PpOfLUCInd}(\ref{L_2X10_PpOfLUCInd_Int}).
The hypothesis on $G$
implies that $b$ is left uniformly continuous.
So \autoref{L_2X10_CharRU}
provides an open neighborhood $U$ of $1 \in G$
such that $t^{- 1} s \in U$
implies $\| b (s) - b (t) \| < \frac{\ep}{2}$.
Since $G$ is locally compact,
we may assume that $\overline{U}$ is compact.
Let $V_0$ be an open neighborhood of~$1$
such that ${\overline{V_0}} \subset U$.

We claim that there is
a continuous function $f \colon G \to [0, \infty)$
such that $\supp (f) \subset U$,
such that for every $s \in V_0 H$ we have
\begin{equation}\label{Eq_2X15Intf}
\int_H f (s h) \, d \mu (h) = 1,
\end{equation}
and such that for every $s \in G$ we have
\begin{equation}\label{Eq_3208Intf}
\int_H f (s h) \, d \mu (h) \leq 1.
\end{equation}

We prove the claim.
Choose an open set $Z \subset G$ with
${\overline{V_0}} \subset Z \subset {\overline{Z}} \subset U$,
and choose $f_0 \in C_{\mathrm{c}} (G)$
such that
\[
0 \leq f_0 \leq 1,
\,\,\,\,\,\,
\supp (f_0) \subset U,
\andeqn
f_0 |_{{\overline{Z}}} = 1.
\]
Since $q \big( {\overline{V_0}} \big)$ is compact,
$q (Z)$ is open,
and $q \big( {\overline{V_0}} \big) \subset q (Z)$,
there exists $f_1 \in C_{\mathrm{c}} (G / H)$
such that
\[
0 \leq f_1 \leq 1,
\,\,\,\,\,\,
\supp (f_1) \subset q (Z),
\andeqn
f_1 |_{q ( {\overline{V_0}} )} = 1.
\]
Define a continuous function $k \colon G \to [0, \infty)$
by
\[
k (s) = \int_H f_0 (s h) \, d \mu (h)
\]
for $s \in G$.
For $s \in Z$,
the integrand is equal to~$1$
on the open set $H \cap s^{- 1} Z \subset H$.
This set contains~$1$,
so is nonempty,
whence $k (s) \neq 0$.
Since also $k (s h) = k (s)$ for all $s \in G$ and $h \in H$,
we see that $k (s) \neq 0$ for all $s \in Z H$.
Therefore the definition
\[
f (s)
 = \begin{cases}
   f_1 (s H) f_0 (s) k (s)^{-1} & s \in Z H
       \\
   0 & s \in G \setminus q^{-1} ( \supp (f_1) )
\end{cases}
\]
is consistent
and, since $q^{-1} ( \supp (f_1) ) \subseteq Z H$,
gives a continuous function $f \colon G \to [0, \infty)$.
For $s \in G$,
by considering the cases
$s \in Z H$ and $s \not\in q^{-1} ( \supp (f_1) )$ separately,
one checks that $\int_H f (s h) \, d \mu (h) = f_1 (s H)$.
So (\ref{Eq_2X15Intf}) holds for $s \in V_0 H$
and~(\ref{Eq_3208Intf}) holds for $s \in G$.
This proves the claim.

Since the left and right uniformities on~$G$ agree,
4.14(g) in Chapter~II of \cite{HewRos1979}
provides an open neighborhood $V_1$ of~$1$
such that $s V_1 s^{-1} \subset V_0$ for all $s \in G$.
This implies,
in particular,
that
\begin{equation}\label{Eq_2X14HVH}
H V_1 H \subset V_0 H.
\end{equation}
Now choose an open neighborhood $V$ of~$1$
such that $s, t \in V$ imply $s^{-1} t \in V_1$.

Consider the left uniform cover
${\mathcal{V}} = \{ s V \colon s \in G \}$ of $G$,
and its image
$q ({\mathcal{V}}) = \{ (s V H) / H \colon s \in G \}$ in $G / H$.
Since the left and right uniformities on $G$ agree,
${\mathcal{V}}$ is a right uniform cover of~$G$,
so that $q ({\mathcal{V}})$ is a right uniform cover of $G / H$.
Since $G / H$ is right uniformly finitistic,
there exists a right uniform cover
${\mathcal{W}}$ of $G / H$ which refines $q ({\mathcal{V}})$
and has finite order~$n$.
Let $(l_W)_{W \in {\mathcal{W}}}$
be a right equiuniformly continuous partition of unity on $G / H$
for ${\mathcal{W}}$ as in \autoref{T_2X09UPartU}.
Then the functions $l_W \circ q$
define an equiuniformly continuous partition of unity on~$G$
such that $(l_W \circ q) (x) = 0$ whenever $W \in {\mathcal{W}}$
and $x \in G \setminus q^{-1} (W)$.

For each $W \in {\mathcal{W}}$,
choose a point $x_W \in q^{- 1} (W)$.
Define a continuous function $g_W \colon G \to [0, \infty)$
by
\[
g_W (s) = l_W (s H) \cdot f (x_W^{- 1} s).
\]
This function vanishes outside the set $x_W U \cap q^{- 1} (W)$.
In particular,
$\supp (g_W)$ is contained in the compact set $x_W {\overline{U}}$.

We claim that for every $s \in G$ and $W \in {\mathcal{W}}$, we have
\begin{equation}\label{Eq_2X13gWEst}
\int_H g_W (s h) \, d \mu (h) = l_W (s H).
\end{equation}
If $s \not\in q^{-1} (W)$,
then both sides of~(\ref{Eq_2X13gWEst}) are zero.
To prove the claim,
we therefore assume $s \in q^{-1} (W)$.
Choose $t \in G$ such that $q^{-1} (W) \subset t V H$.
Then $s, x_W \in t V H$,
so there exist $h, k \in H$ such that
$t^{-1} s h, \, t^{-1} x_W k \in V$.
So $k^{-1} x_W^{-1} s h \in V_1$.
It follows from~(\ref{Eq_2X14HVH})
that $x_W^{-1} s \in V_0 H$,
and from~(\ref{Eq_2X15Intf}) that
\[
\int_H f (x_W^{- 1} s h)  \, d \mu (h) = 1.
\]
The claim follows.

For $W \in {\mathcal{W}}$,
choose $a_W \in A$ such that $\pi (a_W) = b (x_W)$ and
$\| a_W \| = \| b (x_W) \|$.
We next claim that the definition
\begin{equation}
\label{Eq:1:P:LUCInd-Exact}
a (s) = \sum_{W \in {\mathcal{W}}}
      \int_H g_W (s h) \cdot \af_h (a_W) \, d \mu (h),
\end{equation}
for $s \in G$, gives a well defined function $a \colon G \to A$.
For each $W \in {\mathcal{W}}$,
the integral exists because the integrand
is continuous and has compact support.
Moreover, for every $s \in G$, from~(\ref{Eq_2X13gWEst}) we get
\begin{equation}
\label{Eq:2:P:LUCInd-Exact}
\left\| \int_H g_W (s h) \cdot \af_h (a_W) \, d \mu (h) \right\|
  \leq l_W (s H) \cdot \| a_W \|
  \leq l_W (s H) \cdot \| b \|.
\end{equation}
It follows that for each $s \in G$ at most $n$~summands
in~\eqref{Eq:1:P:LUCInd-Exact} are nonzero.
The claim follows.
Moreover, $\| a (s) \| \leq \| b \|$ for all $s \in G$.

We claim that $a$ is right uniformly continuous.
Since the left and right uniformities agree,
it suffices to prove that $a$ is left uniformly continuous.
Let $\rh > 0$.
By \autoref{L_2X15MeasCpt},
there is $M > 0$ such that
$\mu \big( t {\overline{U}} \cap H \big) \leq M$
for all $t \in G$.
Using equiuniform continuity of $( l_W \circ q )_{W \in {\mathcal{W}}}$,
choose an open neighborhood $Z_1$ of~$1$ such that
for every $W \in {\mathcal{W}}$
and $s, t \in G$ with $t^{-1} s \in Z_1$,
we have
\[
| l_W (s H) - l_W (t H) | < \frac{\rh}{4 n \| b \| + 1}.
\]
Using \autoref{L_2X13CcUnifC},
choose an open neighborhood $Z_2$ of~$1$
such that whenever $g, h, s, t \in G$
satisfy $s^{-1} t \in Z_2$,
then
\begin{equation}\label{Eq_2X13gsh}
| f (g s h) - f (g t h) | < \frac{\rh}{4 M \| b \| + 1}.
\end{equation}
Define $Z_0 = Z_1 \cap Z_2$.

Now let $s, t \in G$ satisfy $s^{-1} t \in Z_0$.
Then, using $\| a_W \| \leq \| b \|$ for all $W \in {\mathcal{W}}$,
\begin{align*}
\| a (s) - a (t) \|
& = \Bigg\|
    \sum_{W \in {\mathcal{W}}}
     \Bigg( \int_H l_W (s H) f (x_W^{- 1} s h) \af_h (a_W) \, d \mu (h)
     \\
& \hspace*{5em} {\mbox{}}
       - \int_H l_W (t H) f (x_W^{- 1} t h) \af_h (a_W) \, d \mu (h)
       \Bigg) \Bigg\|
     \\
& \leq \| b \| \sum_{W \in {\mathcal{W}}} | l_W (s H) - l_W (t H) |
       \int_H  f (x_W^{- 1} s h) \, d \mu (h)
     \\
& \hspace*{5em} {\mbox{}}
       + \| b \| \sum_{W \in {\mathcal{W}}} l_W (t H)
           \int_H | f (x_W^{- 1} s h) - f (x_W^{- 1} t h) |
            \, d \mu (h).
\end{align*}
In the first term of the last expression,
as in the proof of \autoref{P_UniformLifts},
for any fixed $s, t \in G$,
at most $2 n$ of the terms are nonzero.
Therefore, using~(\ref{Eq_3208Intf}), this term is dominated by
\[
\| b \| \cdot 2 n \left( \frac{\rh}{4 n \| b \| + 1} \right)
     \left( \sup_{W \in {\mathcal{W}}}
           \int_H  f (x_W^{- 1} s h) \, d \mu (h) \right)
  \leq \left( \frac{2 n \| b \| \rh}{4 n \| b \| + 1} \right)
        \cdot 1
  < \frac{\rh}{2}.
\]
Using $\sum_{W \in {\mathcal{W}}} l_W (t H) = 1$,
the choice of~$M$,
and~(\ref{Eq_2X13gsh}),
we see that the second term is dominated by
\[
\| b \| \left( \frac{\rh}{4 M \| b \| + 1} \right)
  \big[ \mu \big( s^{-1} x_W {\overline{U}} \cap H \big)
     + \mu \big( t^{-1} x_W {\overline{U}} \cap H \big) \big]
 \leq \frac{2 M \| b \| \rh}{4 M \| b \| + 1}
 < \frac{\rh}{2}.
\]
So $\| a (s) - a (t) \| < \rh$.
This completes the proof of the claim.

We now claim that $a \in \LUCInd (A)$.
Let $s \in G$ and let $k \in H$.
Using left invariance of~$\mu$ at
the last step,
we get
\begin{align*}
\af_k (a (s k))
& = \af_k \left( \sum_{W \in {\mathcal{W}}}
       \int_H g_W (s k h) \cdot \af_h (a_W) \, d \mu (h) \right) \\
& = \sum_{W \in {\mathcal{W}}}
       \int_H g_W (s k h) \cdot \af_{k h} (a_W) \, d \mu (h)
 = a (s).
\end{align*}
The claim is proved.

It remains to show that $\| \pi \circ a - b \| < \ep$.
Let $s \in G$.
For $W \in {\mathcal{W}}$,
we have constructed $g_W$ such that
if $h \in H$ and $g_W (s h) \neq 0$,
then $x_W^{-1} s h \in U$.
For such $h$ we have
$\| b (x_W) - b (s h) \| < \frac{\ep}{2}$ by the choice of $U$.
Using $H$-equivariance of $\pi$ for the first equality
and $b \in \LUCInd (B)$ for the third equality,
we then get
\begin{align*}
\| \pi ( \af_h (a_W)) - b (s) \|
& = \| \bt_h (b (x_W)) - b (s) \|
  = \| b (x_W) - \bt_{h^{- 1}} (b (s)) \|
   \\
& = \| b (x_W) - b (s h) \|
  < \frac{\ep}{2}.
\end{align*}
Therefore, using~(\ref{Eq_2X13gWEst})
and $\sum_{W \in {\mathcal{W}}} l_W (s H) = 1$
at the first and last steps,
\begin{align*}
\| \pi (a (s)) - b (s) \|
& = \left\| \sum_{W \in {\mathcal{W}}} \int_H g_W (s h)
   \big( \pi ( \af_h (a_W)) - b (s) \big) \, d \mu (h) \right\|
    \\
& \leq \frac{\ep}{2}
     \sum_{W \in {\mathcal{W}}} \int_H g_W (s h) \, d \mu (h)
  < \ep,
\end{align*}
as desired.
\end{proof}


\begin{thm}
\label{P_PjSubGp}
Let $G$ be a \lcg, and let $H \leqslant G$ be a closed subgroup.
Suppose that whenever $\ph \colon A \to B$
is a surjective $H$-morphism of \ca s,
then $\LUCInd_H^G (\ph)$ is also surjective.
Let $\af$ be a \pj{} action of $G$.
Then $\af |_H$ is also \pj.
\end{thm}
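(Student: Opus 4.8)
The plan is to follow the proof of \autoref{P_SjSubGp}, with the ordinary induction functor $\Ind_H^G$ replaced throughout by the right uniform induction functor $\LUCInd_H^G$ of \autoref{R_NCptInd}. Since only a single ideal is involved, no continuity statement like \autoref{P_Ind_Cts} is needed; the role played by \autoref{P_Ind_Exact} in the proof of \autoref{P_SjSubGp} is here played by the hypothesis that $\LUCInd_H^G$ carries surjective $H$-morphisms to surjective $G$-morphisms. What does need to be set up are the evident analogs, for $\LUCInd_H^G$, of \autoref{P_Eval} and \autoref{P_Maps}.

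The analog of \autoref{P_Eval} is immediate: since $\LUCInd_H^G (A) \subset \Cb (G, A)$, the map $\ev_1^A \colon \LUCInd_H^G (A) \to A$ given by $f \mapsto f (1)$ is a \stHm, and exactly the computation in \autoref{P_Eval} shows that it is an $H$-morphism, natural in~$A$. The analog of \autoref{P_Maps} states that if $(G, A, \af)$ is a $G$-algebra, $(H, B, \bt)$ is an $H$-algebra, and $\ph \colon A \to B$ is an $H$-morphism, then $\et (a) (s) = \ph \big( \af_{s^{- 1}} (a) \big)$ defines a $G$-morphism $\et \colon A \to \LUCInd_H^G (B)$. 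The $H$-invariance of $\et (a)$ and the $G$-equivariance of $\et$ follow from the same calculations as in \autoref{P_Maps}, and $\et$ is contractive since $\| \et (a) (s) \| \leq \| a \|$ for all $s \in G$. The step I expect to be the main technical obstacle is showing that $\et (a)$ actually lies in $\LUCInd_H^G (B)$, i.e.\ (by \autoref{L_2X10_PpOfLUCInd}(\ref{L_2X10_PpOfLUCInd_Int})) that $\et (a)$ is right uniformly continuous. This is where one must be attentive to the left--right distinction between the uniformities. The point is that strong continuity of~$\af$ gives right uniform continuity of $s \mapsto \af_{s^{-1}} (a)$: for $s, t \in G$ one has $\af_{s^{- 1}} (a) - \af_{t^{- 1}} (a) = \af_{s^{- 1}} \big( a - \af_{s t^{- 1}} (a) \big)$, so $\| \af_{s^{- 1}} (a) - \af_{t^{- 1}} (a) \| = \| a - \af_{s t^{- 1}} (a) \|$, and by continuity of $r \mapsto \af_r (a)$ at $r = 1$ there is, for each $\ep > 0$, an open $V \ni 1$ with $\| a - \af_r (a) \| < \ep$ for $r \in V$; thus $s t^{- 1} \in V$ implies $\| \af_{s^{- 1}} (a) - \af_{t^{- 1}} (a) \| < \ep$, which is the criterion of \autoref{L_2X10_CharRU}. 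Composing with the contractive \stHm{} $\ph$ preserves right uniform continuity, so $\et (a) \in \Cru (G, B)$ and hence $\et (a) \in \LUCInd_H^G (B)$.

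With these in hand, the proof proceeds as follows. Let $(H, C, \gm)$ be an $H$-algebra, let $J \subset C$ be an $H$-invariant ideal with quotient map $\pi \colon C \to C / J$, and let $\ph \colon A \to C / J$ be an $H$-morphism; abbreviate $\LUCInd_H^G$ to $\LUCInd$. By hypothesis, $\LUCInd (\pi) \colon \LUCInd (C) \to \LUCInd (C / J)$ is a surjective $G$-morphism, so it identifies $\LUCInd (C / J)$ with the quotient of $\LUCInd (C)$ by the $G$-invariant ideal $\ker \big( \LUCInd (\pi) \big)$. Let $\et \colon A \to \LUCInd (C / J)$ be the $G$-morphism built from $\ph$ as in the previous paragraph. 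Since $\af$ is \eqpj, there is a $G$-morphism $\ld \colon A \to \LUCInd (C)$ with $\LUCInd (\pi) \circ \ld = \et$. Set $\ps = \ev_1^C \circ \ld \colon A \to C$; being a composition of $H$-morphisms, $\ps$ is an $H$-morphism. Using naturality of evaluation (so that $\ev_1^{C / J} \circ \LUCInd (\pi) = \pi \circ \ev_1^C$) and then $\ev_1^{C / J} (\et (a)) = \et (a) (1) = \ph (a)$, we obtain
\[
\pi \circ \ps
  = \pi \circ \ev_1^C \circ \ld
  = \ev_1^{C / J} \circ \LUCInd (\pi) \circ \ld
  = \ev_1^{C / J} \circ \et
  = \ph ,
\]
so $\ps$ is the desired lift and $\af |_H$ is \eqpj.
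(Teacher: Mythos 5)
Your proposal is correct and follows essentially the same route as the paper: build $\et$ from $\ph$ by the formula of \autoref{P_Maps}, lift it through the surjection $\LUCInd_H^G(\pi)$ using equivariant projectivity, and compose with evaluation at $1$. Your explicit check that $\et(a)$ is right uniformly continuous (via \autoref{L_2X10_CharRU} and the identity $\af_{s^{-1}}(a)-\af_{t^{-1}}(a)=\af_{s^{-1}}\big(a-\af_{st^{-1}}(a)\big)$) is exactly the point the paper leaves as ``easy to check,'' and it is done correctly.
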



\begin{proof}
Let $\bt \colon H \to \Aut (B)$ be an action of $H$ on a \ca~$B$.
The maps to be introduced are shown in the diagram below.
Let $J$ be an $H$-invariant ideal in~$B$,
and let $\kp \colon B \to B / J$ be the quotient map.
Let $\ph \colon A \to B / J$ be an $H$-morphism.
Then $\LUCInd_H^G (\kp) \colon \LUCInd_H^G (B) \to \LUCInd_H^G (B / J)$
is surjective by hypothesis.
We can still define $\et \colon A \to \LUCInd_H^G (B)$
by the same formula as in \autoref{P_Maps},
and it is still a $G$-morphism.
It is easy to check that its range,
which a priori is in $F_H^G (B)$,
is actually in $\LUCInd_H^G (B)$.
Since $\af$ is \pj,
there is a $G$-morphism $\ld \colon A \to \LUCInd_H^G (B)$
such that $\LUCInd_H^G (\kp) \circ \ld = \et$.
We still have $H$-equivariant maps
$\ev_1^B \colon \LUCInd_H^G (B) \to B$
and $\ev_1^{B / J} \colon \LUCInd_H^G (B / J) \to B / J$,
given by the same formulas as in \autoref{P_Eval},
which give the following commutative diagram:
\[
\xymatrix{
& & \LUCInd_H^G (B)
      \ar[d]^{\LUCInd_H^G (\kp)} \ar[rr]^(0.55){\ev_1^B}
   & & B \ar[d]^{\kp} \\
A \ar[rr]_-{\et} \ar[rru]^{\ld}
   & & \LUCInd_H^G (B / J)
      \ar[rr]^(0.55){\ev_1^{B / J}}
   & & B / J.
}
\]
It is easy to check that $\ev_1^{B / J} \circ \et = \ph$.
Therefore the map $\ps = \ev_1^{B} \circ \ld$
is a $H$-morphism from $A$ to~$B$ such that $\kp \circ \ps = \ph$.
This completes the proof that $\af |_H$ is \pj.
\end{proof}


\begin{thm}
\label{P_2X14PjSubGp}
Let $G$ be a \lcg, and let $H \leqslant G$ be a closed subgroup.
Assume that at least one of the following conditions is satisfied:
\begin{enumerate}
\item\label{P_2X14PjSubGp_Cpt}
$H$ is compact.
\item\label{P_2X14PjSubGp_CoCpt}
$G / H$ is compact.
\item\label{P_2X14PjSubGp_SIN}
$G$ is a \SINgp.
\end{enumerate}
Let $\af$ be a \pj{} action of $G$.
Then $\af |_H$ is also \pj.
\end{thm}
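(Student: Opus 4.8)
The plan is to obtain this as an immediate consequence of the two main results of the section, \autoref{P_PjSubGp} and \autoref{P_LUCInd-Exact}. First I would recall that \autoref{P_PjSubGp} reduces the assertion to a single statement about the right uniform induction functor: namely, that whenever $\ph \colon A \to B$ is a surjective $H$-morphism of \ca{s}, the induced $G$-morphism $\LUCInd_H^G (\ph)$ is again surjective. So the whole proof consists of checking that hypothesis under each of the three conditions in the statement.

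Next I would invoke \autoref{P_LUCInd-Exact}, which asserts exactly that under any one of (1)~$H$ compact, (2)~$G / H$ compact, or (3)~$G$ a \SINgp{}, the functor $\LUCInd_H^G \colon \CatH \to \CatG$ is exact; in particular, applied to a short exact sequence $0 \to I \to A \stackrel{\pi}{\to} B \to 0$ it carries $\pi$ to a surjection. Feeding this into \autoref{P_PjSubGp} yields that $\af |_H$ is \pj, which completes the proof. Thus the argument proper is a two-line deduction.

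At this level there is no real obstacle: all the substance has been absorbed into \autoref{P_LUCInd-Exact}, whose proof is where the three cases genuinely diverge. The cocompact case is just \autoref{P_Ind_Exact} for the ordinary induction functor; the compact-$H$ case reduces to the non-equivariant uniform lifting theorem \autoref{P_UniformLifts} together with surjectivity on fixed point algebras (\autoref{P_FixToFix}); and the \SINgp{} case requires the hands-on construction of a uniformly continuous section built from an equiuniformly continuous partition of unity of finite order on $G / H$ (available by \autoref{P_CosetSp_LeftU}) convolved with a suitable bump function on $G$, with uniform continuity of the resulting lift verified using \autoref{L_2X13CcUnifC} and \autoref{L_2X15MeasCpt}. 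Consequently the proof of \autoref{P_2X14PjSubGp} itself is entirely routine given those earlier results.
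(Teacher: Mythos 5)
Your proposal is correct and matches the paper's proof exactly: the theorem is obtained by combining \autoref{P_LUCInd-Exact} with \autoref{P_PjSubGp}, and your summary of where the three cases are actually handled is accurate.
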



\begin{proof}
Combine \autoref{P_LUCInd-Exact}
and \autoref{P_PjSubGp}.
\end{proof}


We point out that \autoref{P_2X14PjSubGp}(\ref{P_2X14PjSubGp_SIN})
applies whenever $G$ is the product of a discrete group
and a locally compact abelian group.


\begin{cor}
\label{P_EqPj_implies_Pj}
Let $G$ be a \lcg, and let $A$ be a $G$-algebra which is \eqpj.
Then $A$ is (non\eqv{ly}) \pj.
\end{cor}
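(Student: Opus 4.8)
The plan is to obtain this as an immediate special case of \autoref{P_2X14PjSubGp}. First I would take $H = \{1\}$ to be the trivial subgroup of~$G$. Since $G$ is a Hausdorff topological group, $\{1\}$ is a closed subgroup, and it is obviously compact, so hypothesis~(\ref{P_2X14PjSubGp_Cpt}) of \autoref{P_2X14PjSubGp} is satisfied. Applying that theorem to the \pj{} action $\af \colon G \to \Aut(A)$, we conclude that the restricted action $\af|_{\{1\}}$ is \eqpj{} as an object of $\mathcal{C}_{\{1\}}$.

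It then remains to observe that \eqv{} \pjy{} for the trivial group is nothing but ordinary \pjy. A $\{1\}$-algebra is just a \ca, a $\{1\}$-morphism is just a \stHm, and every ideal is automatically $\{1\}$-invariant; under these identifications the lifting condition of \autoref{D_EqPj} for the group $\{1\}$ is word for word the definition of \pjy{} of~$A$. (No separability or second countability hypotheses intervene, since, unlike \sjy, \pjy{} is defined without them.) Hence $A$ is (non\eqv{ly}) \pj, as claimed.

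I do not expect any real obstacle here: the entire content is carried by \autoref{P_2X14PjSubGp}, and the only thing to check is the routine bookkeeping that $\mathcal{C}_{\{1\}}$, together with \autoref{D_EqPj}, reduces to the usual category of \ca s and \stHm s with the usual notion of \pjy. I would also point out why there is no analogue of this corollary in the \sj{} case for noncompact~$G$: the corresponding statement \autoref{P_SjSubGp} requires $G / H$ to be compact, which forces $G$ itself to be compact when $H = \{1\}$, and indeed \autoref{R:SjSubGp_notCpct} exhibits a \sj{} action of~$\Z$ on a non\sj{} \ca. This contrast is exactly the point of stating the corollary separately.
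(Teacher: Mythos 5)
Your argument is correct and is exactly the paper's: apply \autoref{P_2X14PjSubGp} with $H = \{1\}$, which is closed and compact, and note that \eqv{} \pjy{} for the trivial group coincides with ordinary \pjy. Nothing further is needed.
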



\begin{rmk}\label{R:PjDiscreteSubGp}
\autoref{P_EqPj_implies_Pj}
implies that there is no projective action
of a \lcg{} on a non\pj{} \ca.
This is in contrast to \autoref{R:SjSubGp_notCpct},
where it is shown that the discrete group $\Z$ can act \sj{ly}
on a \ca{} which is not \sj{} in the usual sense.
\end{rmk}


\begin{rmk}
\label{R:NoDLim}
The proof of \autoref{P_PjSubGp}
cannot be generalized to cover semiprojectivity.
This is clear from \autoref{R:SjSubGp_notCpct}.
The problem is that there is no analog of \autoref{P_Ind_Cts}
for the right uniform induction functor.

Let $\N^{+} = \{ 1, 2, \ldots, \infty \}$
be the one point compactification of~$\N$.
Set $B = C (\N^{+})$,
and for $n \in \N$ set
\[
J_n = \big\{ b \in B \colon
  {\mbox{$b (k) = 0$ for $k \in \{ n + 1, \, n + 2, \ldots, \infty \}$}}
     \big\}.
\]
Then
${\overline{\bigcup_{n = 1}^{\infty} J_n}} = C_0 (\N) \subset B$.
Call this ideal~$J$.
For $l \in \N$,
define $b_l \in B$ by
\[
b_l (j) = \begin{cases}
   1 & j = l
       \\
   0 & j \neq l,
\end{cases}
\]
and define $a \in C_{\mathrm{b}} (\Z, B)$ by
$a (n) = b_n$ for $n \in \N$
and $a (n) = 0$ for $n \in \Z \setminus \N$.
Then $a \in C_{\mathrm{b}} (\Z, J)$,
but the distance from $a$ to any element of
$\bigcup_{n = 1}^{\infty} C_{\mathrm{b}} (\Z, J_n)$
is at least~$1$,
so
$a \not \in
   {\overline{\bigcup_{n = 1}^{\infty} C_{\mathrm{b}} (\Z, J_n)}}$.

We have written everything in terms of bounded continuous functions,
but on $\Z$ all continuous functions are uniformly continuous.
\end{rmk}



\section{Semiprojectivity of the crossed product algebra}
\label{Sec_SjCrPrd}

\indent
If $\GAa$ is an \eqsj{} \ca,
can we deduce that the crossed product algebra $\CGAa$
is \sj?
We show in \autoref{P_CrPrdSj}
that the answer is yes when $G$ is finite and $A$ is unital,
and in \autoref{E_2X10_CptCP} that the answer can be no
when $G$ is compact.
We then provide examples to show that the converses
of both \autoref{P_CrPrdSj} and \autoref{P_EqSj_implies_Sj} are false.
We end the section with further open problems.


\begin{thm}
\label{P_CrPrdSj}
Let $G$ be a discrete group such that $C^* (G)$ is \sj,
and let $\GAa$ be a unital \ga{}
which is \eqsj{} in the unital category.
Then $\CGAa$ is \sj{} (in the usual sense).
\end{thm}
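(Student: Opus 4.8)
The plan is to use the covariant-representation description of $\ast$-homo\-morphisms out of $\CGAa$ and to split the required lifting into two steps: first lift the unitary representation of~$G$ using \sjy{} of $C^*(G)$, then lift the representation of~$A$ using \eqv{} \sjy{} of $\GAa$, with equivariance taken for the \emph{inner} action generated by the lifted unitaries. First I would reduce to the unital category. Because $G$ is discrete and $A$ is unital, both $\CGAa$ (with unit $1_A \delta_e$) and $C^*(G)$ are unital and separable, so by \autoref{P_SjUnitDiffCats} applied with the trivial group (which is compact) it suffices to show $\CGAa$ is \sj{} in the unital category, and likewise \sjy{} of $C^*(G)$ gives \sjy{} in the unital category. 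Accordingly, let $C$ be a unital \ca, let $J_1 \subset J_2 \subset \cdots$ be ideals with $J = {\overline{\bigcup_n J_n}}$, write $\pi_n \colon C/J_n \to C/J$ for the quotient maps, and let $\ph \colon \CGAa \to C/J$ be a unital $\ast$-homomorphism. Put $\pi = \ph|_A \colon A \to C/J$, a unital $\ast$-homomorphism, and $u_g = \ph(1_A \delta_g) \in U(C/J)$; then $g \mapsto u_g$ is a unitary representation of~$G$ and the covariance identity $u_g \pi(a) u_g^{-1} = \pi(\af_g(a))$ holds for $a \in A$ and $g \in G$.

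Next I would lift the group part. By the universal property of $C^*(G)$, the representation $g \mapsto u_g$ corresponds to a unital $\ast$-homomorphism $C^*(G) \to C/J$, and \sjy{} of $C^*(G)$ in the unital category furnishes $n_0 \in \N$ and a unitary representation $v \colon G \to U(C/J_{n_0})$ with $\pi_{n_0}(v_g) = u_g$ for all $g$. Write $q_n \colon C/J_{n_0} \to C/J_n$ for the quotient map when $n \ge n_0$, and define $\gm_g = \mathrm{Ad}(v_g) \in \Aut(C/J_{n_0})$; this is a \ct{} action because $G$ is discrete. Since inner automorphisms preserve every closed two-sided ideal, $(G, C/J_{n_0}, \gm)$ is a $G$-algebra, the ideals $J_n/J_{n_0}$ for $n \ge n_0$ are $\gm$-invariant with closed union $J/J_{n_0}$, and the quotient $(C/J_{n_0})/(J/J_{n_0})$, identified with $C/J$, carries the induced action $\mathrm{Ad}(u_\bullet)$. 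With this action on $C/J$ and the action $\af$ on $A$, the covariance identity says precisely that $\pi \colon A \to C/J$ is a unital $G$-morphism.

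Now I would invoke \eqv{} \sjy. Applying \autoref{D_EqSj} in the unital category to the $G$-algebra $(G, C/J_{n_0}, \gm)$, the increasing $G$-invariant ideals $(J_n/J_{n_0})_{n \ge n_0}$, and the unital $G$-morphism $\pi$, I obtain $n \ge n_0$ and a unital $G$-morphism $\ps_0 \colon A \to C/J_n$ --- with $C/J_n$ given the action $\mathrm{Ad}(q_n(v_\bullet))$ --- such that $\pi_n \circ \ps_0 = \pi$. Unwinding equivariance of $\ps_0$ gives $q_n(v_g)\,\ps_0(a)\,q_n(v_g)^{-1} = \ps_0(\af_g(a))$, so $\big(\ps_0,\, g \mapsto q_n(v_g)\big)$ is a covariant pair for $\GAa$ in $C/J_n$; by the universal property of the maximal crossed product it integrates to a unital $\ast$-homomorphism $\ps \colon \CGAa \to C/J_n$ with $\ps(a\delta_g) = \ps_0(a)\,q_n(v_g)$. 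Finally $\pi_n \circ \ps$ agrees with $\ph$ on the copy of $A$ (since $\pi_n \circ \ps_0 = \pi = \ph|_A$) and on each unitary $1_A\delta_g$ (since $\pi_n(q_n(v_g)) = \pi_{n_0}(v_g) = u_g = \ph(1_A\delta_g)$), and these generate $\CGAa$, so $\pi_n \circ \ps = \ph$; thus $\ps$ is the required lift.

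Every step here is essentially formal apart from this last verification; the one genuine idea is to lift the unitaries \emph{before} the algebra, so that the inner action on $C/J_{n_0}$ makes the new sequence $(J_n/J_{n_0})$ $G$-invariant and turns $\pi$ into a $G$-morphism for free. The point requiring care is the handling of the unital category: one needs $C$, hence $C/J_{n_0}$, unital so that the $v_g$ are honest unitary elements rather than multipliers, and it is exactly this --- together with $G$ being discrete --- that forces $\CGAa$ and $C^*(G)$ to be unital and makes $\mathrm{Ad}(v_\bullet)$ a norm-continuous action. This is where both hypotheses, $G$ discrete and $A$ unital, are used.
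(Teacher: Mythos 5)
Your proof is correct, and its underlying strategy is the same as the paper's: lift the group unitaries first, use the inner action they generate to convert the remaining problem into an equivariant lifting problem for $A$, and then integrate the resulting covariant pair. The difference lies in how the first step is packaged. The paper does not lift the unitaries by hand; it invokes relative \sjy{} (Definition~1.2 of \cite{Phi2012}) together with Lemma~1.4 of \cite{Phi2012}, which reduce \sjy{} of $\CGAa$ to relative \sjy{} of the canonical map $\om \colon C^* (G) \to \CGAa$. In that relative setting the morphism on $C^* (G)$ comes already lifted to the top algebra $C$, so the inner action $\gm_s = \mathrm{Ad} (\ld (u_s))$ is defined on $C$ itself and the original chain $J_1 \subset J_2 \subset \cdots$ is used unchanged. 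You instead apply ordinary unital \sjy{} of $C^* (G)$ directly to the given tower, obtaining unitaries $v_g$ only at a finite stage $C / J_{n_0}$, and then rerun the equivariant lifting problem over $C / J_{n_0}$ with the truncated tower $(J_n / J_{n_0})_{n \geq n_0}$. This costs a little extra bookkeeping (invariance of $J_n / J_{n_0}$ under $\mathrm{Ad} (v_g)$, density of $\bigcup_{n \geq n_0} J_n / J_{n_0}$ in $J / J_{n_0}$, and the identification of the induced action on $C / J$ with $\mathrm{Ad} (u_g)$, all of which you handle correctly), but it buys a self-contained argument that does not rely on the relative-\sjy{} machinery of \cite{Phi2012}. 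Your insistence on the unital category, so that the $v_g$ are honest unitaries rather than elements of a corner and $\mathrm{Ad} (v_{\bullet})$ is a continuous action of the discrete group, is exactly the point where the paper also works unitally, and your final check of $\pi_n \circ \ps = \ph$ on $A$ and on the canonical unitaries is the same generation argument used there.
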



\begin{proof}
We will show that $\CGAa$ is \sj{}
in the unital category.
Applying \autoref{P_SjUnitDiffCats},
with the group being trivial,
we conclude that $\CGAa$ is \sj.

We regard $A$ as a subalgebra of $\CGAa$.
Also, for $s \in G$ let $u_s \in \CGAa$ be the standard implementing
unitary,
so that $u_s a u_s^* = \af_s (a)$ for all $a \in A$.
The unitaries $u_s$ induce a \stHm{}
$\om \colon C^* (G) \to \CGAa$.

By assumption, $C^* (G)$ is \sj.
Thus, Lemma~1.4 of \cite{Phi2012} shows that it suffices to prove
that $\om$ is relatively \sj{}
in the sense of Definition~1.2 of \cite{Phi2012}
(but with the group being trivial).
Accordingly,
let $C$ be a unital \ca,
let $J_1 \subset J_2 \subset \cdots$ be ideals in~$C$,
let $J = {\overline{\bigcup_{n = 1}^{\infty} J_n}}$,
let
\[
\kp \colon C \to C / J,
\, \, \, \, \, \,
\kp_n \colon C \to C / J_n,
\andeqn
\pi_n \colon C / J_n \to C / J
\]
be the quotient maps,
and let $\ld \colon C^* (G) \to C$
and $\ph \colon \CGAa \to C / J$
be unital \stHm{s}
such that $\kp \circ \ld = \ph \circ \om$.

Define an action $\gm \colon G \to \Aut (C)$
by $\gm_s (c) = \ld (u_s) c \ld (u_s)^*$
for $c \in C$ and $s \in G$.
Then $(G, C, \gm)$ is a unital \ga,
and the ideals $J_n$ are $G$-invariant.

One checks that $\ph |_A \colon A \to C/J$ is $G$-equivariant.
Since $\GAa$ is \eqsj{} (in the unital category),
there exists $n \in \N$
and a unital $G$-morphism $\ps_0 \colon A \to C / J_n$
such that $\pi_n \circ \ps_0 = \ph |_A$.
Define $v_s = (\kp_n \circ \ld) (u_s)$ for $s \in G$.
Then $(v, \ps_0)$ is a covariant representation of $\GAa$ in $C / J_n$,
so there exists a unique \stHm{} $\ps \colon \CGAa \to C / J_n$
such that $\ps (u_s) = v_s$ and $\ps |_A = \ps_0$.
This \stHm{} is the one required by the definition of
relative semiprojectivity.
\end{proof}


The basic examples of countable discrete groups~$G$
that satisfy the hypothesis
of \autoref{P_CrPrdSj}, that is, such that $C^* (G)$ is \sj,
are finite groups, $\Z$, and the finitely generated free groups.
There is no known characterization
of those
groups $G$ for which $C^* (G)$ is \sj.

In \autoref{P_CrPrdSj}, some restriction on $G$ is necessary.
Even compactness is not enough.


\begin{exa}\label{E_2X10_CptCP}
Let $G$ be an infinite compact group.
It follows from Corollary~1.9 of \cite{Phi2012}
that the trivial action of $G$ on~$\C$ is \sj.
However, the crossed product is $\C \rtimes G = C^* (G)$,
which is an infinite direct sum of matrix algebras,
so not \sj{} by Corollary~2.10 of \cite{Bla2004}.
\end{exa}


\autoref{P_CrPrdSj} gives us an easy way of proving
that many actions by $\Z$ are not \eqsj{}
(in the unital category).


\begin{exa}\label{E_2X10_RotNSj}
Let $\te \in \R$.
Let $\af \colon \Z \to \Aut (C (S^1) )$
be the action generated by rotation by $\exp (2 \pi i \te)$.
Then $\af$ is never \sj{} in the unital category,
for any value of~$\te$.

If $\te \not\in \Q$,
then the crossed product is a simple $A {\mathbb{T}}$-algebra,
and therefore not \sj,
for example by Corollary~2.14 of \cite{Bla2004}.

If $\te \in \Q$,
then $A = C (S^1) \rtimes_{\af} \Z$
is Morita equivalent to $C \big( (S^1)^2 \big)$.
Since both $A$ and $C \big( (S^1)^2 \big)$ are unital and
$C \big( (S^1)^2 \big)$ is not \sj,
it follows from Corollary~2.29 of \cite{Bla1985}
that $A$ is not \sj.

In both cases, it follows from \autoref{P_CrPrdSj}
that $\af$ is not \eqsj.
\end{exa}


There are versions of \autoref{P_CrPrdSj} in which one
takes the crossed product by only part of the action.
As an easy example,
consider an action of a product of two groups,
and take the crossed product by one of them.
We will not explore the possibilities further here.


We end this section with two examples that show that the converses
of both \autoref{P_CrPrdSj} and \autoref{P_EqSj_implies_Sj} are false,
and we give more open problems.


\begin{exa}
\label{E:CrPrdNotSj}
There is an action $\af$ of $\Z_2$ on $\OA{2}$ such that
the crossed product $B = \OA{2} \rtimes_\af \Z_2$ is not \sj.
It follows from \autoref{P_CrPrdSj} that
this action is not \eqsj.
Thus, the converse of \autoref{P_EqSj_implies_Sj} fails.

We follow \cite{Izu2004};
also see Section~6 of \cite{Bla2004}.
Take $\af$ to be as in Lemma~4.7 of \cite{Izu2004}
or, more generally,
as in Theorem 4.8(3) of \cite{Izu2004}
with the groups $\Gm_0$ and~$\Gm_1$ chosen so that at least
one of them is not finitely generated,
and also such that $\OA{2} \rtimes_\af \Z_2$
satisfies the Universal Coefficient Theorem.
The action $\af$ is outer,
so $B$ is simple by Theorem~3.1 of \cite{Kis1981}
and purely infinite by Corollary~4.6 of \cite{JEOOSA1998}.
Therefore it is a Kirchberg algebra
(a separable purely infinite simple nuclear \ca).
It does not have finitely generated K-theory,
so $B$ is not \sj{} by Corollary~2.11 of \cite{Bla2004}.
\end{exa}


\begin{exa}
\label{E:AlgNotSj}
Let ${\widehat{\af}} \colon \Z_2 \to \Aut (B)$
be the dual of the action $\af$ of \autoref{E:CrPrdNotSj}.
Then
\[
B \rtimes_{\widehat{\af}} \Z_2
\cong M_2 \otimes \OA{2}
\cong \OA{2},
\]
which is \sj.
However, $B$ was shown in \autoref{E:CrPrdNotSj}
not to be \sj.
So \autoref{P_EqSj_implies_Sj} implies
that $\widehat{\af}$ is not \eqsj.
This shows that the converse of \autoref{P_CrPrdSj} fails.
\end{exa}


\autoref{E:CrPrdNotSj}
also shows if $A$ is \sj{} and $\aGA$ is an action
of a finite group on~$A$,
then $\GAa$ need not be \eqsj.
However, we have neither a proof nor a counterexample for the
following question.


\begin{qst}
\label{Q:IfAAndCStSj}
Let $G$ be a finite cyclic group of prime order,
and let $\GAa$ be a $G$-algebra.
Suppose that $A$ and $\CGAa$ are both \sj.
Does it follow that $\GAa$ is \eqsj?
\end{qst}


If $\aGA$ is \sj, then \autoref{P_SjSubGp} implies
that for any subgroup $H \leqslant G$,
the action $\alpha \vert_H$ is also \sj.
Thus, by \autoref{P_CrPrdSj}, the crossed product
$A \rtimes_{\af \vert_H} H$ is semiprojective.
If $G$ has proper subgroups,
one must therefore probably also consider
these intermediate crossed product algebras.

At a conference in August 2010,
George Elliott asked if there is a relation between
\eqv{} semiprojectivity and the Rokhlin property.
The following question addresses what seems to be a plausible
connection.


\begin{qst}
\label{Q:SjAndRP}
Let $G$ be a finite group,
and let $\GAa$ be a unital \ga.
Suppose that $A$ is (non\eqv{ly}) \sj{}
and $\af$ has the Rokhlin property.
Does it follow that $\GAa$ is \eqsj?
\end{qst}


Even if this is false in general,
it might be true if $A$ is simple,
or using an equivariant version of a weak form of semiprojectivity.


\section{Semiprojectivity of the fixed point algebra}\label{Sec_SjFixPt}


In this section we study the analog
of the question of \autoref{Sec_SjCrPrd}
for the fixed point algebra.
That is,
given an \eqsj{} \ca{} $\GAa$,
can we deduce that the fixed point algebra $A^G$ is \sj?

In \autoref{P_2X10_SatSj},
we give a positive answer when $G$ is finite,
$A$ is unital, and the action is saturated.
We do not know whether one can drop the conditions that $A$ be unital
or that the action be saturated.

Some conditions are necessary.
In \autoref{E_2X10_CptFix} we give a \sj{} action
of a compact (but not finite) group on a unital \ca\
such that the fixed point algebra is not \sj.

In \autoref{P_FixPt},
we show that if a noncompact group acts \sj{ly}
then the fixed point algebra is trivial.
This gives a positive answer to the question,
but more interestingly it shows that the trivial action
of a noncompact group is never \sj.
We can therefore give a precise characterization
when the trivial action of a group is \sjpj{} (\autoref{P_SjTriv}).


Let $G$ be a second countable compact group
and let $\af \colon G \to \Aut (A)$ be a \sj{} action.
\autoref{E_2X10_CptCP} shows that the crossed product
$A \rtimes_{\af} G$ need not be \sj,
but in that example the fixed point algebra is \sj.
In general, though, the fixed point algebra also need not be \sj.


\begin{exa}\label{E_2X10_CptFix}
Let $\af \colon S^1 \to \Aut ( \OA{2} )$
be the gauge action on the Cuntz algebra~$\OA{2}$,
defined on the standard generators $s_1$ and $s_2$ by
$\af_{\zt} (s_j) = \zt s_j$ for $\zt \in S^1$ and $j = 1, 2$.
This action is \eqsj{} by Corollary~3.12 of \cite{Phi2012}.
However, the fixed point algebra is the $2^{\infty}$~UHF algebra,
which is not \sj, for example by Corollary~2.14 of \cite{Bla2004}.
\end{exa}


We obtain a positive result when the group is finite
and the action is saturated
in the sense of Definition 7.1.4 of \cite{Phi1987}.
Saturation is a quite weak noncommutative analog of freeness;
see the discussion at the beginning of Section~5.2 of \cite{Phi2009}.


\begin{prp}\label{P_2X10_SatSj}
Let $G$ be a finite group,
let $A$ be a unital, separable \ca,
and let $\aGA$ be a saturated action of $G$ on~$A$.
If $\af$ is \sj,
then $A^G$ is \sj.
\end{prp}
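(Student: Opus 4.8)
The plan is to realize $A^G$ as a full corner of the crossed product $\CGAa$, to prove that this crossed product is semiprojective by invoking \autoref{P_CrPrdSj}, and then to transfer semiprojectivity back to $A^G$ by Morita invariance. All three ingredients are already available to us.

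First I would show that $\CGAa$ is semiprojective. Since $G$ is finite, $C^*(G)$ is finite dimensional and hence semiprojective, so $G$ satisfies the hypothesis of \autoref{P_CrPrdSj}. As $\af$ is \eqsj{} and $A$ is unital, \autoref{P_SjUnitDiffCats} shows that $\af$ is \eqsj{} in the unital category, so \autoref{P_CrPrdSj} applies and yields that $\CGAa$ is semiprojective in the usual sense. Note that $\CGAa$ is separable (as $G$ is finite) and unital (with unit $1_A u_e$), and that $A^G$ is separable and unital as well.

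Next I would identify $A^G$ inside $\CGAa$. Write $u_g \in \CGAa$ for the canonical implementing unitaries and set $p = \tfrac{1}{\card(G)} \sum_{g \in G} u_g$. A direct computation shows that $p = p^* = p^2$, that $u_g p = p$ for every $g \in G$, and that $p \big( \sum_{g} a_g u_g \big) p = E\big( \sum_{g} a_g \big)\, p$, where $E \colon A \to A^G$ is the canonical conditional expectation $E(a) = \tfrac{1}{\card(G)} \sum_{g} \af_g(a)$. Hence $p\,(\CGAa)\,p = A^G p$, and $a \mapsto a p$ is a unital isomorphism of $A^G$ onto the corner $p\,(\CGAa)\,p$ (injectivity is seen by inspecting the coefficient of $u_e$). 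Saturation of $\af$ in the sense of Definition~7.1.4 of \cite{Phi1987} means precisely that $p$ is full in $\CGAa$, equivalently that $A^G$ and $\CGAa$ are Morita equivalent through the natural bimodule $A$. Since $A^G$ and $\CGAa$ are separable and unital, $\CGAa$ is semiprojective, and the two are Morita equivalent, Corollary~2.29 of \cite{Bla1985} shows that $A^G$ is semiprojective.

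The step I expect to require the most care is the translation of Phillips' definition of saturation into the statement that $p$ is a full projection in $\CGAa$; here one uses in an essential way that $G$ is finite, so that $p$ genuinely lies in $\CGAa$ rather than merely in its multiplier algebra. This is also the point where the argument breaks down for compact groups, consistent with \autoref{E_2X10_CptFix}. Once the Morita equivalence $A^G \sim \CGAa$ is in hand, the remainder is a routine assembly of \autoref{P_SjUnitDiffCats}, \autoref{P_CrPrdSj}, and Morita invariance of semiprojectivity.
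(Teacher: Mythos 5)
Your proof is correct and takes essentially the same route as the paper's: semiprojectivity of $\CGAa$ via \autoref{P_CrPrdSj}, strong Morita equivalence of $A^G$ with $\CGAa$ from saturation, and then Corollary~2.29 of \cite{Bla1985}. The only differences are cosmetic: the paper invokes the Morita equivalence directly from Definition~7.1.4 of \cite{Phi1987} rather than exhibiting the averaging projection $p$ and the corner $p\,(\CGAa)\,p$, and your explicit appeal to \autoref{P_SjUnitDiffCats} to pass to the unital category spells out a step the paper leaves implicit.
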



\begin{proof}
By definition,
saturation implies that $A^G$ is strongly Morita equivalent
to $\CGAa$.
\autoref{P_CrPrdSj} tells us that $\CGAa$ is \sj,
so $A^G$ is \sj{} by Corollary~2.29 of \cite{Bla1985}.
\end{proof}


Finiteness is needed,
since the gauge action in \autoref{E_2X10_CptFix}
is saturated.
(In fact, it follows from Theorem~5.11 of \cite{Phi2009}
that this action is hereditarily saturated.)
However, we don't know whether saturation is needed.


\begin{qst}\label{Q_2X10_FixFinGp}
Let $G$ be a finite group,
let $A$ be a unital \ca,
and let $\aGA$ be an arbitrary \sj{} action of $G$ on~$A$.
Does it follow that $A^G$ is \sj?
\end{qst}


If $G$ is compact and $A$ unital,
then $A^G$ is isomorphic to a unital corner in $\CGAa$,
for example by Theorem~II.10.4.18 of \cite{Bla2006}.
If we knew that
semiprojectivity passes to arbitrary unital corners
(an open problem),
we would get a positive answer to \autoref{Q_2X10_FixFinGp}.


\begin{thm}
\label{P_FixPt}
Let $\GAa$ be a separable \eqsj{} \ga,
and assume that $G$ is not compact.
Then $A^G = \{ 0 \}$.
\end{thm}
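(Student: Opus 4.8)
The plan is to establish the contrapositive: if $A^G \neq \{0\}$, then $\af$ is not \eqsj. First, since $A^G$ is a \ca{}, pick a nonzero selfadjoint element $a \in A^G$ and rescale so that $\| a \| = 1$. The aim is then to produce a $G$-algebra $(G, C, \gm)$, an increasing sequence $J_1 \subset J_2 \subset \cdots$ of $G$-invariant ideals of $C$ with $J = \overline{\bigcup_{n} J_n}$, and a $G$-morphism $\ph \colon A \to C/J$ with $\ph(a) \neq 0$, designed so that $\ph(a)$ is a $G$-fixed element of $C/J$ but no $G$-fixed element of any $C/J_n$ is mapped to $\ph(a)$ by the quotient map $\pi_n \colon C/J_n \to C/J$. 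Given such data, if $\ps \colon A \to C/J_n$ were a $G$-morphism with $\pi_n \circ \ps = \ph$, then $\ps(a)$ would be a $G$-fixed element of $C/J_n$ with $\pi_n(\ps(a)) = \ph(a)$, a contradiction; so $\ph$ witnesses the failure of \autoref{D_EqSj}. (As foreseen in \autoref{R_2X10_Unl}, the surjection $C \to C/J$ produced this way does not restrict to a surjection of fixed point algebras; by \autoref{P_FixToFix} this is impossible for compact $G$, consistently with the theorem failing in that case.)

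The point of the construction is to arrange that the fixedness of $a$ is a purely limiting phenomenon, and this is exactly where noncompactness enters. Here is the prototype, which already re-proves the result of Blackadar (\cite{BlaPrivate}) that the trivial action of $\Z$ on $\C$ is not \sj. Let $C = C \big( [0, \infty], M_2 \big)$ with the $\Z$-action generated by $F \mapsto \big( t \mapsto w(t) F(t) w(t)^* \big)$, where $w(t) = \operatorname{diag}\big( e^{i/(t+1)}, \, 1 \big)$ for $t < \infty$ and $w(\infty) = 1$. Set $J_n = \{ F \in C : F|_{[n, \infty]} = 0 \}$. These are increasing, and $\Z$-invariant since the automorphism acts fibrewise; $J = \overline{\bigcup_n J_n} = C_0 \big( [0, \infty), M_2 \big)$, so $C/J \cong M_2$ via evaluation at $\infty$, and the induced $\Z$-action on $M_2$ is trivial because $w(\infty) = 1$. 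Let $\ph \colon \C \to C/J \cong M_2$ be given by $\ph(1) = p_0 := \tfrac{1}{2} \left( \begin{smallmatrix} 1 & 1 \\ 1 & 1 \end{smallmatrix} \right)$; this is a $\Z$-morphism and $\ph(1) \neq 0$. Now $C/J_n \cong C \big( [n, \infty], M_2 \big)$ with $\pi_n$ being evaluation at $\infty$. A $\Z$-morphism from $\C$ to $C/J_n$ is the same thing as a $\Z$-fixed projection $P \in C \big( [n, \infty], M_2 \big)$, and being $\Z$-fixed forces $P(t)$ to commute with $w(t)$ for every $t$; since $w(t)$ has distinct eigenvalues for every \emph{finite} $t$, $P(t)$ is diagonal on $[n, \infty)$, so $P(\infty)$ is diagonal too, hence $P(\infty) \neq p_0$. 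Therefore $\ph$ does not lift.

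For a general noncompact $G$ and a general action $\af$, one keeps the same scheme with three modifications: (i) work over $M_2({\widetilde{A}})$ with $C/J \cong M_2({\widetilde{A}})$ carrying the action $\id_{M_2} \otimes {\widetilde{\af}}$, and take $\ph(b) = p_0 \otimes b$, so $\ph(a) = p_0 \otimes a$ is nonzero because $a \neq 0$, $G$-fixed because $a \in A^G$, and still not diagonal; (ii) fold ${\widetilde{\af}}$ into the fibrewise automorphisms; and (iii) --- the essential point --- replace the one-parameter family $w(t)$ by a degenerating family of $G$-actions on the fibres that is trivial at the point at infinity but has fixed point algebra missing $p_0 \otimes a$ at every finite stage. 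For this one uses that a second countable locally compact group is $\sigma$-compact, writes $G = \bigcup_n K_n$ with $K_n$ compact and $K_n \subset \operatorname{int} K_{n+1}$, and builds the finite-stage fibre actions so that the fibre at stage $n$ already ``sees'' $K_n$ (for abelian $G$ one can simply use a sequence of nontrivial characters degenerating to the trivial character; in general one must be cleverer). With this in hand, the obstruction runs as in the prototype: a $G$-fixed lift of $\ph$ to $C/J_n$ is forced to take values, at every finite stage, in a fixed proper subalgebra not containing $p_0 \otimes a$, hence the same holds at infinity, which is absurd. Hence $\af$ is not \eqsj, completing the contrapositive.

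The main obstacle is step (iii) for groups with Kazhdan's property (T): there the regular representation cannot be continuously deformed to the trivial one, so a naive ``conjugate by a degenerating representation'' construction is unavailable, and one needs a genuinely different way to build a sequence of $G$-actions that are ``large'' at every finite stage yet degenerate to the trivial action --- this is where the full force of $G$ being second countable and $\sigma$-compact (and in particular the exhaustion $G = \bigcup_n K_n$) has to be exploited. Everything else --- checking continuity of the assembled action on $C$, $G$-invariance of the $J_n$, that $\overline{\bigcup_n J_n}$ has the intended quotient, and the final fixed-point computation --- is routine.
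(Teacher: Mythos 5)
Your overall strategy --- exhibit one explicit equivariant lifting problem that cannot be solved whenever $A^G \neq \{0\}$ --- is sound, and your prototype is correct: it does re-prove Blackadar's result that the trivial action of $\Z$ on $\C$ is not \sj. But the theorem itself is not proved, because everything rests on your step (iii): for an arbitrary noncompact second countable~$G$, you must produce a family of fibrewise actions, trivial at the point at infinity, whose fixed point algebras at every finite stage avoid $p_0 \otimes a$, and you explicitly leave this open (``one must be cleverer'', and you note the naive degenerating-representation idea fails for property (T) groups). This is not a routine detail to be filled in later; it is the heart of the matter, and the mechanism you propose --- conjugating by a nontrivial finite-dimensional representation or character that degenerates to the trivial one --- is simply unavailable for large classes of noncompact groups (for instance, noncompact simple Lie groups have no nontrivial finite-dimensional unitary representations at all), so the argument as formulated cannot be completed without a genuinely different idea.

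The paper's proof supplies exactly that different idea, and it avoids deforming the fibre action altogether: instead of making $G$ act by varying inner automorphisms, it lets $G$ act by translation on its own one-point compactification $G^{+}$. One sets $D = \{ f \in C (G^{+}, \, M_2 \otimes A) \colon f (\infty) \in \C e_{2, 2} \otimes A \}$ with $\bt_s (f) (t) = (\id_{M_2} \otimes \af_s) ( f (s^{-1} t) )$. Noncompactness enters twice: first, $D^G$ consists only of constant functions with values in $\C e_{2, 2} \otimes A^G$; second, Struble's theorem \cite{Str1974} gives a proper left invariant metric, and composing the distance-to-identity function $d_0$ with stretching maps $\sm_k$ and a path of projections from $e_{1, 1}$ to $e_{2, 2}$ yields \stHm{s} $\om_k \colon A \to D$ that are only asymptotically equivariant, $\| \bt_s (\om_k (a)) - \om_k (\af_s (a)) \| \leq 2 M \| a \| d_0 (s) / k$, yet satisfy $\dist (\om_k (a), D^G) \geq \| a \|$ because $\om_k (a) (1) = e_{1, 1} \otimes a$. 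The approximate equivariance is then converted into an exact lifting problem of the required form by passing to the continuous part $F$ of $\ell^{\infty} (\N, D)$ modulo $C_0 (\N, D)$, with $J_n$ the sequences vanishing from the $n$-th coordinate on: the induced map $A \to F / J$ is an honest $G$-morphism, while any equivariant lift to $F / J_n$ must have all coordinates of $\ps (a)$ in $D^G$ for $a \in A^G$, contradicting the distance estimate unless $a = 0$. If you want to salvage your approach, this is the kind of construction your step (iii) needs; note that it uses no representation theory of $G$ whatsoever, only $\sigma$-compactness through the proper metric and the translation action on $G^{+}$.
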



\begin{proof}
We manufacture an equivariant lifting problem in several steps.

Step~1:
The action $\af \colon G \to \Aut(A)$ induces
an action ${\overline{\af}} \colon G \to \Aut( M_2 \otimes A )$
by acting trivially on $M_2$, that is,
${\overline{\af}}_s ( x \otimes a ) = x \otimes \af_s(a)$
for $x \in M_2$, $a \in A$, and $s \in G$.
Let $(e_{j, k})_{j, k = 1, 2}$
be the standard system of matrix units for~$M_2$.
Let $\ld \mapsto u_{\ld} \in M_2$,
for $\ld \in [0, 1]$,
be a continuously differentiable path of unitaries from the identity
$u_0 = \left(\begin{smallmatrix}
1 & 0 \\ 0 & 1
\end{smallmatrix} \right)$
to
$u_1 = \left(\begin{smallmatrix}
0 & 1 \\ 1 & 0
\end{smallmatrix} \right)$.
Continuous differentiability is required for convenience;
it gives us $M \in [0, \infty)$
such that for all $\ld_1, \ld_2 \in [0, 1]$
we have $\| u_{\ld_1} - u_{\ld_2} \| \leq M | \ld_1 - \ld_2 |$.
For $\ld \in [0, 1]$ define $\ph_{\ld} \colon A \to M_2 \otimes A$
by
$\ph_{\ld} (a) = u_{\ld} e_{1, 1} u_{\ld}^* \otimes a$
for $a \in A$.
Thus $\ph_0 (a) = e_{1, 1} \otimes a$
and $\ph_1 (a) = e_{2, 2} \otimes a$
for $a \in A$.
Also, for $\ld_1, \ld_2 \in [0, 1]$ and $a \in A$,
we have
\begin{equation}\label{Eq_2X09_St}
\| \ph_{\ld_1} (a) - \ph_{\ld_2} (a) \|
  \leq 2 \| u_{\ld_1} - u_{\ld_2} \| \cdot \| a \|
  \leq 2 M | \ld_1 - \ld_2 | \cdot \| a \|.
\end{equation}
It is immediate that
\begin{equation}\label{Eq:P_FixPt:St2.2}
{\overline{\af}}_s \circ \ph_\ld = \ph_\ld \circ \af_s.
\end{equation}
for $s \in G$ and $\ld \in [0,1]$.

Step~2:
Let $G^+ = G \cup \{\infty\}$ denote
the one point compactification of~$G$.
Let $D$ be the \ca{}
\[
D = \big\{ f \in C (G^+, \, M_2 \otimes A) \colon
         f (\infty) \in \C e_{2, 2} \otimes A \big\}.
\]
For $s \in G$,
we take $s \cdot \infty = \infty$.
This gives a extension of the action of $G$
on itself by translation
to a continuous action of $G$ on~$G^{+}$.
We define an action $\bt$ of $G$ on $D$ by
$\bt_s (f) (t) = {\overline{\af}}_s ( f (s^{- 1} t) )$
for $f \in D$, $s \in G$, and $t \in G^+$.
Since $G$ is not compact,
the fixed point algebra of this action consists of the
constant functions taking values in $\C e_{2, 2} \otimes A^G$.

Step~3:
For $k = 1, 2, \ldots$,
define ``stretching'' maps $\sm_k \colon [0, \infty) \to [0, 1]$
by
\[
\sm_k (\ld) = \min ( \ld / k, \, 1)
\]
for $\ld \in [0, \infty)$.
We may extend these to maps from $[0, \infty]$
by setting $\sm_k (\infty) = 1$ for $k \in \N$.
For $\ld_1, \ld_2 \in [0, \infty)$,
we have
\begin{equation}\label{Eq_2X09_StSt}
| \sm_k (\ld_1) - \sm_k (\ld_2) |
  \leq \frac{| \ld_1 - \ld_2 |}{k}.
\end{equation}
%

Step~4:
Recall that a metric is called {\emph{proper}}
if every closed bounded set is compact.
By the main theorem of \cite{Str1974},
there is a proper
left invariant metric~$d$ which generates the topology of~$G$.
For $t \in G$ let $d_0 (t) = d (t, 1)$ denote the distance from $t$
to the identity $1 \in G$,
and extend this function to $G^+$ by setting $d_0 (\infty) = \infty$.
Since $d$ is proper,
the map $d_0 \colon G^+ \to [0, \infty]$ is continuous.

Using left invariance of~$d$,
for $s, t \in G$ we get
$| d_0 (s^{-1} t) - d_0 (t) | \leq d_0 (s)$.
Therefore, for $k \in \N$,
\begin{equation}\label{Eq_2X09_StStSt}
\big| \sm_k (d_0 (s^{-1} t)) - \sm_k (d_0 (t)) \big|
  \leq \frac{d_0 (s)}{k}.
\end{equation}

For $k \in \N$, we define a \stHm{}
$\om_k \colon A \to D$ by
\[
\om_k (a) (t) = \ph_{\sm_k (d_0 (t))} ( a )
\]
for $a \in A$ and $t \in G^{+}$.
Then for $s \in G$ we have,
using density of $G$ in $G^{+}$
and~(\ref{Eq:P_FixPt:St2.2}) at the second step,
(\ref{Eq_2X09_St})~at the third step,
and (\ref{Eq_2X09_StStSt})~at the fourth step,
\begin{align}\label{Eq_2X09_Q}
\| \bt_s (\om_k (a)) - \om_k ( \af_s (a) ) \|
 & = \sup_{t \in G^{+}}
    \| {\overline{\af}}_s ( \ph_{\sm_k (d_0 (s^{- 1} t))} (a) )
    - \ph_{\sm_k (d_0 (t))} ( \af_s (a) ) \|
        \\
 & = \sup_{t \in G}
    \| \ph_{\sm_k (d_0 (s^{- 1} t))} ( \af_s (a) ) )
    - \ph_{\sm_k (d_0 (t))} ( \af_s (a) ) \|
       \notag
        \\
 & \leq \sup_{t \in G}
     2 M \big| \sm_k (d_0 (s^{-1} t)) - \sm_k (d_0 (t)) \big|
     \cdot \| \af_s (a) \|
       \notag
        \\
 & \leq \frac{2 M \| a \| d_0 (s)}{k}.
       \notag
\end{align}
In particular,
we have
\begin{equation}\label{Eq:P:SjTriv}
\lim_{k \to \infty} \| \bt_s (\om_k (a)) - \om_k ( \af_s (a) ) \|
 = 0.
\end{equation}

Moreover, for $k \in \N$ and $a \in A$,
we have $\om_k (a) (1) = e_{1, 1} \otimes a$,
so, using Step~2,
\begin{align}\label{Eq_2X09_22}
\dist ( \om_k (a), \, D^G)
  & \geq \inf_{b \in A} \| e_{1, 1} \otimes a - e_{2, 2} \otimes b \|
    \\
  & \geq \inf_{b \in A} \| (e_{1, 1} \otimes 1)
             (e_{1, 1} \otimes a - e_{2, 2} \otimes b) \|
   = \| a \|. \nonumber
\end{align}
%

Step~5:
Let $E$ be the sequence algebra $E = l^{\infty} (\N, D)$.
Let $\gm \colon G \to \Aut (E)$ denote the
(not necessarily \ct) coordinatewise action of $G$ on $E$,
that is,
for $s \in G$ and $(x_k)_{k \in \N} \in E$
we set $\gm_s ( (x_k)_{k \in \N} ) = ( \bt_s (x_k) )_{k \in \N}$.
Let $F \subset E$ be the $C^*$-subalgebra
\[
F = \big\{ x \in E \colon
       {\mbox{$s \mapsto \gm_s (x)$ is continuous}} \big\}.
\]
Then $F$ is $\gm$-invariant,
and we also use $\gm$ to denote the restricted action
$\gm \colon G \to \Aut (F)$.
By construction, this action is \ct.

For $n \in \N$ define
\[
J_n = \big\{ (x_k)_{k \in \N} \in E \colon
            {\mbox{$x_k = 0$ for $k \geq n$}} \big\}
     \subset E.
\]
Then $J_1 \subset J_2 \subset \cdots$
is an increasing sequence of invariant ideals,
and the ideal $J = \overline{\bigcup_{n = 1}^{\infty} J_n}$
is equal to $C_0 (\N, D) \subset l^{\infty} (\N, D)$.
Clearly $J \subset F$.
We can identify $F / J_n$
with the set of elements of
$l^{\infty} \big( \{ n + 1, \, n + 2, \ldots \}, \, D \big)$
on which the coordinatewise action of $G$ is \ct.

For $n \in \N$, the action of $G$ on $F$ drops to $F / J_n$,
and the action also drops to $F / J$.
Let $\pi_n \colon F / J_n \to F / J$
be the natural quotient $G$-morphism.
We have $F^G = l^{\infty} (\N, D^G)$,
and one checks by direct computation
that
$(F / J_n)^G = F^G / J_n^G$,
which we identify with
$l^{\infty} \big( \{ n + 1, \, n + 2, \ldots \}, \, D^G \big)$.

Step~6:
For each $a \in A$, consider the sequence
$\om (a) = (\om_1 (a), \, \om_2 (a), \, \ldots) \in E$
constructed in Step~4.
We claim that $\om (a) \in F$.
To see this,
let $a \in A$ and let $s, t \in G$.
Then,
using~(\ref{Eq_2X09_Q}) and $\|\om_k\|=1$ at the third step,
\begin{align*}
\| \gm_s (\om (a)) - \gm_t ( \om (a)) \|
 & = \sup_{k \in \N} \| \bt_s (\om_k (a)) - \bt_t ( \om_k (a)) \|
   \\
 &  = \sup_{k \in \N} \| \bt_{t^{-1} s} (\om_k (a))
 - \om_k ( \af_{t^{-1} s} (a) )
 + \om_k ( \af_{t^{-1} s} (a) - a) \|
      \\
 & \leq \sup_{k \in \N} \frac{2 M \| a \| d_0 (t^{-1} s)}{k}
 + \| \af_{t^{-1} s} (a) - a \|
   \\
 & = 2 M \| a \| d ( s, t) + \| \af_s (a) - \af_t (a) \|.
\end{align*}
Since $\af$ is a continuous action, this proves the claim.

Step~7:
Define a \stHm{} ${\overline{\om}} \colon A \to F / J$
by sending $a \in A$
to the image of $\om (a)$ in the quotient $F / J$.
It follows from~\eqref{Eq:P:SjTriv}
that ${\overline{\om}}$ is a $G$-morphism.

Suppose now that $A$ is \eqsj.
Then there are $n \in \N$ and a $G$-morphism
$\ps \colon A \to F / J_n$ such that
$\pi_n \circ \ps = {\overline{\om}}$.

Fix an element $a \in A^G$.
We want to show $a = 0$.
Since $\ps$ is $G$-equivariant,
$\ps (a) \in (F / J_n)^G$.
Identify $(F / J_n)^G$ with
$l^{\infty} \big( \{ n + 1, \, n + 2, \ldots \}, \, D^G \big)$
as at the end of Step~5,
and write $\ps (a) = ( \ps_{n + 1} (a), \, \ps_{n + 2} (a), \, \ldots)$.
Then, using~(\ref{Eq_2X09_22}) at the last step,
\begin{align*}
\| \pi_n ( \ps (a)) - {\overline{\om}} (a) \|
 & = \big\| \pi_n \big( ( \ps_{n + 1} (a), \, \ps_{n + 2} (a), \,
                    \ldots)
       - ( \om_{n + 1} (a), \, \om_{n + 2} (a), \, \ldots) \big) \big\|
   \\
 & = \liminf_{k \to \infty} \| \ps_k (a) - \om_k (a) \|
   \\
 & \geq \inf_{k \in \{ n + 1, \, n + 2, \, \ldots \} }
             \dist ( \om_k (a), \, D^G )
   \geq \| a \|.
\end{align*}
For $a \neq 0$
this contradicts $\pi_n ( \ps (a)) = {\overline{\om}} (a)$.
Thus $A^G = \{ 0 \}$.
\end{proof}


\begin{cor}\label{P_SjTriv}
Let $A$ be a nonzero separable \ca,
and let $G$ be a second countable \lcg.
Then the trivial action of $G$ on $A$ is \sjpj{}
if and only if $A$ is \sjpj{} and $G$ is compact.
\end{cor}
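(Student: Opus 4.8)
The plan is to assemble this from results already in the paper, handling the semiprojective and projective assertions in parallel. For the implication from right to left, suppose $A$ is \sj{} (resp.\ \pj{}) and $G$ is compact. Since $G$ is a second countable \lcg{}, being compact makes it a second countable compact group, so the trivial action is \eqsj{} by Corollary~1.9 of \cite{Phi2012} (resp.\ \eqpj{} by \autoref{P_PjTrivAct}). This direction is immediate.

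For the implication from left to right, suppose the trivial action $\io \colon G \to \Aut(A)$ is \eqsj{} (resp.\ \eqpj{}). I would first show that $G$ is compact, which is the step where real work has already been invested, namely in \autoref{P_FixPt}. Its contrapositive says that if $G$ is not compact then a separable \eqsj{} \ga{} has trivial fixed point algebra; for the trivial action the fixed point algebra is all of $A$, which is nonzero by hypothesis, so $G$ must be compact --- provided we are in the \eqsj{} case. In the \eqpj{} case this still applies, because equivariant \pjy{} implies equivariant \sjy{}: in the setup of \autoref{D_EqSj}, with $J = \overline{\bigcup_n J_n}$ and a $G$-morphism $\ph \colon A \to C/J$, the map $C/J_1 \to C/J$ is a surjective $G$-morphism, so equivariant \pjy{} yields a $G$-morphism $A \to C/J_1$ lifting $\ph$, and one may take $n = 1$. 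Thus in both cases $\io$ is \eqsj{} and hence $G$ is compact.

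Once $G$ is known to be compact, it remains to see that $A$ itself is \sjpj{}. In the semiprojective case this is exactly \autoref{P_EqSj_implies_Sj}, and in the projective case it is \autoref{P_EqPj_implies_Pj} (which in fact requires no hypothesis on $G$). This completes both directions. The point to stress is that there is essentially no new obstacle here: all the substance lies in \autoref{P_FixPt} and in the constructions of \cite{Phi2012}, and the only step needing a separate remark is that equivariant \pjy{} implies equivariant \sjy{}, which lets \autoref{P_FixPt} force compactness in the projective case as well.
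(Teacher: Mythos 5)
Your argument is correct and follows essentially the same route as the paper: \autoref{P_FixPt} applied to the trivial action forces $G$ to be compact, \autoref{P_EqSj_implies_Sj} and \autoref{P_EqPj_implies_Pj} give \sjpjy{} of $A$, and the converse direction is Corollary~1.9 of \cite{Phi2012} together with \autoref{P_PjTrivAct}. The only difference is that you spell out the (standard, and correct) observation that equivariant \pjy{} implies equivariant \sjy{} so that \autoref{P_FixPt} also applies in the \pj{} case, a point the paper leaves implicit.
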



\begin{proof}
If $G$ is not compact and $\GAa$ is equivariantly semiprojective,
then \autoref{P_FixPt} implies that $A^G = \{ 0 \}$.
If $\af$ is trivial, it follows that $A = \{ 0 \}$.

Now suppose $G$ is compact.
If $A$ is \sjpj,
then it follows from Corollary~1.9 of \cite{Phi2012}
and \autoref{P_PjTrivAct}
that the trivial action of $G$ on $A$ is \sjpj.
The converse follows from \autoref{P_EqPj_implies_Pj}
and \autoref{P_EqSj_implies_Sj}.
\end{proof}

\section*{Acknowledgments}

\indent
The first named author
thanks the Research Institute for Mathematical Sciences
of Kyoto University for its support through a visiting professorship.
The second and third named authors
thank Siegfried Echterhoff and Stefan Wagner
for valuable discussions on induction of group actions.
The third named author thanks Joachim Zacharias for pointing out
the reference \cite{Ank1989}.



\providecommand{\bysame}{\leavevmode\hbox to3em{\hrulefill}\thinspace}
\providecommand{\MR}{\relax\ifhmode\unskip\space\fi MR }
\providecommand{\MRhref}[2]{%
  \href{http://www.ams.org/mathscinet-getitem?mr=#1}{#2}
}
\providecommand{\href}[2]{#2}

\end{document}